\theoremstyle{plain} \textwidth=36pc \textheight=51pc
\newtheorem{theorem}{Theorem}[section]
\newtheorem{lemma}[theorem]{Lemma}
\newtheorem{exa}[theorem]{Example}
\newtheorem{thm}[theorem]{Theorem}
\newtheorem{prop}[theorem]{Proposition}
\newtheorem{corollary}[theorem]{Corollary}
\theoremstyle{definition}
\newtheorem{defn}[theorem]{Definition}
\newtheorem{remark}[theorem]{Remark}
\numberwithin{equation}{section}
\DeclareMathOperator{\End}{End}
\DeclareMathOperator{\Hom}{Hom}
 \DeclareMathOperator{\im}{im}
\begin{document}
\title[Universal enveloping algebras of DG Poisson algebras]{Universal enveloping algebras of differential \\graded Poisson algebras}

\author{Jiafeng L\"u}
\address{L\"u: Department of Mathematics, Zhejiang Normal University, Jinhua, Zhejiang 321004, P.R. China}
\email{jiafenglv@zjnu.edu.cn, jiafenglv@gmail.com}

\author{Xingting Wang}
\address{Wang: Department of Mathematics,
University of Washington, Seattle, Washington 98195, USA}
\email{xingting@uw.edu}

\author{Guangbin Zhuang}
\address{Zhuang: Department of Mathematics,
University of Southern California, Los Angeles 90089-2532, USA}

\email{gzhuang@usc.edu}

\begin{abstract}
In this paper, we introduce the notion of {\it differential graded Poisson algebra} and study its {\it universal enveloping algebra}. From any differential graded Poisson algebra $A$, we construct two isomorphic differential graded algebras: $A^e$ and $A^E$.  It is proved that the category of differential graded Poisson modules over $A$ is isomorphic to the category of differential graded modules over $A^e$, and $A^e$ is the unique universal enveloping algebra of $A$ up to isomorphisms. As applications of the universal property of $A^e$, we prove that $(A^e)^{op}\cong (A^{op})^e$ and $(A\otimes_{\Bbbk}B)^e\cong A^e\otimes_{\Bbbk}B^e$ as differential graded algebras. As consequences, we obtain that ``$e$'' is a monoidal functor and establish links among the universal enveloping algebras of differential graded Poisson algebras, differential graded Lie algebras and associative algebras.
\end{abstract}
\subjclass[2010]{16E45, 16S10, 17B35, 17B63}
\keywords{differential graded algebras, differential graded Hopf algebras, (differential) graded Lie algebras, (differential) graded Poisson algebras, universal enveloping algebras, monoidal category}
\maketitle
\section{Introduction}
The notion of {\it Poisson algebra} originally arises from Poisson geometry, which can be considered as a ``combination'' of commutative algebras and Lie algebras. Recently, many interesting generalizations on Poisson algebras have been obtained in both commutative and noncommutative settings: Poisson orders (\cite{BG}), graded Poisson algebras (\cite{C}, \cite{CFL}), noncommutative Leibniz-Poisson algebras (\cite{CD}), left-right noncommutative Poisson algebras (\cite{CDL}), Poisson PI algebras (\cite{MPR}), double Poisson algebras (\cite{Vd}), noncommutative Poisson algebras (\cite{Xup}), Novikov-Poisson algebras (\cite{X}) and Quiver Poisson algebras (\cite{YYZ}), etc. It should be noted that, by using the ``deformation quantization'' theory (\cite{EK}, \cite{H}, \cite{K}), Poisson structures can be used as a tool to study noncommutative algebras. For example, by considering the Poisson structure on the center of Weyl algebras, Belov and Kontsevich proved that the Jacobian Conjecture is stably equivalent to the Dixmier Conjecture (\cite{BK}). One of the interesting aspects of Poisson algebras is the notion of {\it Poisson universal enveloping algebra}, which was first introduced by Oh in 1999 (\cite{Oh1}), and later Oh, Park and Shin studied the PBW-basis for Poisson universal enveloping algebras (\cite{OPS}). Most recently, many people show their interests in Poisson universal enveloping algebras: Umirbaev studied the universal enveloping algebras and universal derivations of Poisson algebras (\cite{U}), the authors of the present paper studied the universal enveloping algebras of Hopf Poisson algebras and Poisson Ore-extensions (\cite{LWZ1}, \cite{LWZ2}), and also studied the Artin-Schelter regularity of Poisson universal enveloping algebras (\cite{LWZ4}); Yang, Yao and Ye studied the universal enveloping algebras of noncommutative Poisson algebras (\cite{YYY}), etc.

We know that, differential graded algebras arise naturally in algebra, representation theory and algebraic topology. For instance, Koszul complexes,
endomorphism algebras, fibres of ring homomorphisms, singular chain and cochain algebras of topological spaces, and bar resolutions all admit natural differential graded algebra structures. Moreover, differential graded algebras play an important role in both commutative and noncommutative algebras. For example, differential graded commutative algebra provides many powerful techniques for proving theorems about modules over commutative rings (\cite{BS}). For the brief introduction, basic properties and some nice homological properties of differential graded algebras, one can see (\cite{AFH}) and (\cite{FIJ}-\cite{FJ2}) for the further details.

Motivated by the notion of {\it differential graded Lie algebra} (\cite{GM}) and the fact that Lie algebras is a special class of Poisson algebras, one can ask the following natural question:
\begin{itemize}
  \item What will happen if we combine the structures of differential graded algebras and graded Poisson algebras?
\end{itemize}

As an attempt to answer the above question, we define the {\it differential graded Poisson algebra} in this paper, which is a differential graded algebra with a compatible Poisson structure (Definition \ref{dgpa}). The present paper is the first paper on our project of ``differential graded Poisson algebras and deriving differential graded Poisson categories''. The main aim of this paper is to give the notion of a {\it differential graded Poisson algebra} and some basic definitions and properties of such algebras. In particular, in order to study the category of left differential graded Poisson modules over differential graded Poisson algebras, we prove the following results:

{\noindent\bf Proposition A} {\it (Lemma \ref{lem1}, Proposition \ref{dg}, Theorems \ref{same}) Let $A$ be any differential graded Poisson algebra. Then $A^e\cong A^E$ as differential graded algebras. Here the constructions of $A^e$ and $A^E$ are given in Subsections 3.1.1 and 3.1.2}

Because we have $A^e\cong A^E$ for any differential graded Poisson algebra $A$, we choose to consider the case of $A^e$ in the rest of the paper.

{\noindent\bf Theorem B} {\it (Theorems \ref{equivalence} and \ref{universalproperty}) Let $(A, d, \{,\})$ be a differential graded Poisson algebra. Then we have the following statements.
\begin{enumerate}
  \item We have $\mathbf{DGP}(A)\cong\mathbf{DG}(A^e)$, where $\mathbf{DGP}(A)$ denotes the category of left differential graded Poisson modules over $A$, and $\mathbf{DG}(A^e)$ denotes the category of left differential graded modules over $A^e$.
  \item The differential graded algebra $A^e$ is the universal enveloping algebra of $A$. That is, $(A^e, \partial)$ together with the differential graded algebra map $m: (A,d)\to (A^e,\partial)$ and the differential graded Lie algebra map $h:(A,\{,\},d)\to (A^e, [,], \partial)$ satisfy the following ``universal property'': for any differential graded algebra $(D,\delta)$ with a differential graded algebra map $f: (A,d)\to (D,\delta)$ and a differential graded Lie algebra map $g:(A,\{,\},d)\to(D,[,],\delta)$ satisfying
\begin{align*}
f(\{a,b\})=g(a)f(b)-(-1)^{|a||b|}f(b)g(a),\\
g(ab)=f(a)g(b)+(-1)^{|a||b|}f(b)g(a),
\end{align*}
for any homogeneous elements $a,b\in A$, then there exists a unique differential $\mathbb{Z}$-graded algebra map $\phi: (A^{e},\partial)\to(D,\delta)$, making the diagram
\[
\xymatrix{
A\ar[rr]^-{m}_-{h}\ar[dr]^-{f}_-{g} && A^e\ar@{-->}[dl]^-{\exists ! \phi}\\
& D&
}
\]
``bi-commute'', i.e., $\phi m=f$ and $\phi h=g$.
\end{enumerate}}

Note that the ``universal property'' of $A^e$ is different from the classical universal property, there does not exist an obvious adjoint functor (see Remark \ref{u}). As applications of the ``universal property'' of $A^e$, we prove the following result:

{\noindent\bf Theorem C} {\it (Proposition \ref{tensor}, Corollaries \ref{e} and \ref{mono}, Theorems \ref{thop} and \ref{thotimes}) Let $\mathbf{DGPA}$ be the category of differential graded Poisson algebras, and $\mathbf{DGA}$ be the category of differential graded algebras. Then
\begin{enumerate}
  \item $e:\mathbf{DGPA}\to \mathbf{DGA}$ is a covariant functor between these two symmetric monoidal categories;
  \item $(A^{op})^e\cong(A^e)^{op}$ for any $A\in \mathbf{DGPA}$;
  \item $(A\otimes B)^e\cong A^e\otimes B^e$ for any $A,B\in \mathbf{DGPA}$;
  \item $\Bbbk^e\cong\Bbbk$.
\end{enumerate}
As a consequence, we have that ``$e$'' is a monoidal functor.}

It should be noted that [(2), (3), Theorem C] is helpful to
study the ``Hochschild (co)homology theory'' in the category of differential graded Poisson bi-modules (Remark \ref{4.13}).

Note that by now we have three meanings of the phrase: ``universal enveloping algebra''---universal enveloping algebra of a Poisson algebra $A$, universal enveloping algebra of a Lie algebra $L$ and universal enveloping algebra of an associative algebra $R$, we usually denote them by $A^e$, $\mathbf{U}(L)$ and $R^{\mathcal{E}}$, respectively. Fortunately, we can build some links among them in the case of differential graded Poisson algebras:

{\noindent\bf Corollary D} {\it (Propositions \ref{p1} and \ref{sem}, Theorem \ref{semi} and Corollary \ref{coop}) The following statements are true.
\begin{enumerate}
  \item Let $(L,[,]_L,d)$ be a differential graded Lie algebra and $\mathbb{S}L$ be its graded symmetric algebra. Then $\mathbb{S}L$ is a differential graded Poisson algebra and  $(\mathbb{S}L)^e\cong\mathbf{U}(L\rtimes L)$ as differential graded algebras, where $L\rtimes L$ denotes the graded semidirect product of $L$, which is also a differential graded Lie algebra.
  \item Let $A\in \mathbf{DGPA}$. Then $(A^{\mathcal{E}})^e\cong(A^e)^{\mathcal{E}}$, where $A^{\mathcal{E}}:=A\otimes A^{op}$.
\end{enumerate}}

The paper is organized as follows:
In Section 2, we give the definitions, examples and basic properties of differential graded Poisson algebras and differential graded Poisson modules. In particular, we prove that $\mathbf{DGPA}$ is a symmetric monoidal category with a left and right identity $\Bbbk$. Section 3 is devoted to studying the universal enveloping algebra of a differential graded Poisson algebra. For any $A\in\mathbf{DGPA}$, Firstly we construct two differential graded algebras $A^e$ and $A^E$ and prove that $A^e\cong A^E$ as differential graded algebras. Then we choose to consider the case of $A^e$ and prove that $\mathbf{DGP}(A)\cong\mathbf{DG}(A^e)$, $A^e$ is the unique universal enveloping algebra of $A$ up to isomorphisms, $e:\mathbf{DGPA}\to \mathbf{DGA}$ is a covariant functor between these two symmetric monoidal categories and $(\mathbb{S}L)^e\cong \mathbf{U}(L\rtimes L)$ as differential graded algebras.
The last section provides some applications of the ``universal property'' of universal enveloping algebras of differential graded Poisson algebras. We prove that $(A^e)^{op}\cong (A^{op})^e$ and $(A\otimes_{\Bbbk}B)^e\cong A^e\otimes_{\Bbbk}B^e$ as differential graded algebras. As consequences, we have that $e:\mathbf{DGPA}\to\mathbf{DGA}$ in fact is a monoidal functor and $(A^{\mathcal{E}})^e\cong (A^e)^{\mathcal{E}}$ as differential graded algebras.

Finally we would like to point out that although the degree of the Poisson bracket is zero in our definition of differential graded Poisson algebras, the results obtained in this paper are also true for differential graded Poisson algebras of degree $n$ with some expected signs, where $n\in \mathbb{Z}$ is the degree of the Poisson bracket.

Throughout, $\mathbb{Z}$ denotes the set of integers, $\Bbbk$ denotes a base field and everything is over $\Bbbk$ unless otherwise stated, all (graded) algebras are assumed to have an identity and all (graded) modules are assumed to be unitary.
\subsection*{Acknowledgments} The authors first want to give their sincere gratitudes to James Zhang for introducing them this project. They also want to thank Yanhong Bao, Jiwei He, Xuefeng Mao, Cris Negron and James Zhang for many valuable discussions and suggestions on this paper. The first author is partially supported by National Natural Science Foundation of China [11001245, 11271335 and 11101288] and the second author is partially supported by U.S. National Science Foundation [DMS0855743].

\medskip
\section{Preliminaries}
In this section, we mainly give some basic definitions, examples and properties of differential graded Poisson algebras and differential graded Poisson modules. We usually omit the proofs since most of them are routine checks.

\subsection{Differential graded Poisson algebras}
In this subsection, we will introduce the notion of a {\it differential graded Poisson algebra} and give some examples and basic properties of such algebras.
\begin{defn}\label{dga}
Let $(A,\cdot)$ be a $\mathbb{Z}$-graded algebra with a $\Bbbk$-linear homogeneous map $d: A\to A$ of degree $1$. If $d^2=0$, and
$$d(a\cdot b)=d(a)\cdot  b+(-1)^{|a|}a\cdot d(b),$$
for any homogeneous elements $a,\;b\in A$, then $A$ is called a {\it differential $\mathbb{Z}$-graded algebra}, which is usually denoted by $(A,\cdot, d)$, or simply by $(A,d)$ or $A$ if no confusions arise. Here $d$ is called the {\it differential} of $A$ and $|x|$ denotes the degree of any homogeneous element $x$.

Let $A,B$ be two differential graded algebras and $f:A\to B$ be a graded algebra map of degree zero. Then $f$ is called a {\it differential graded algebra map} if $f$ commutes with the differentials.

Denote by $\mathbf{DGA}$ the category of differential graded algebras whose morphism space consists of differential graded algebra maps.
\end{defn}

\begin{defn}\label{gla}
Let $A$ be a $\mathbb{Z}$-graded $\Bbbk$-vector space. If there is a $\Bbbk$-linear map
$$\{,\}: A\otimes A\to A$$ of degree 0 such that
\begin{itemize}
  \item[(i)] (graded antisymmetry): $\{a,b\}=-(-1)^{|a||b|}\{b,a\}$;
  \item[(ii)] (graded Jacobi identity): $\{a,\{b,c\}\}=\{\{a,b\},c\}+(-1)^{|a||b|}\{b,\{a,c\}\}$,
  \end{itemize}
for any homogeneous elements $a,b,c\in A$, then $(A,\{,\})$ is called a {\it graded Lie algebra}. If in addition, there is a $\Bbbk$-linear map $d: A\to A$ of degree 1 such that  $d^2=0$ and
\begin{itemize}
  \item [(iii)] (graded Leibniz rule for bracket): $d(\{a,b\})=\{d(a),b\}+(-1)^{|a|}\{a,d(b)\}$,
  \end{itemize}
 for any homogeneous elements $a,b\in A$, then $A$ is called a {\it differential graded Lie algebra}, which is usually denoted by $(A,\{,\},d)$, or simply by $A$ if no confusions arise.
\end{defn}

\begin{defn}\label{dgpa}
Let $(A,\cdot)$ be a $\mathbb{Z}$-graded $\Bbbk$-algebra. If there is a $\Bbbk$-linear map
$$\{,\}: A\otimes A\to A$$ of degree 0 such that
\begin{itemize}
  \item[(i)] $(A,\{,\})$ is a graded Lie algebra;
  \item [(ii)] (graded commutativity): $a\cdot b=(-1)^{|a||b|}b\cdot a$; \item[(iii)] (biderivation property): $\{a,b\cdot c\}=\{a,b\}\cdot c+(-1)^{|a||b|}b\cdot\{a,c\}$,
  \end{itemize}
for any homogeneous elements $a,b,c\in A$. Then $A$ is called a {\it graded Poisson algebra}. If in addition, there is a $\Bbbk$-linear homogeneous map $d: A\to A$ of degree 1 such that  $d^2=0$ and
\begin{itemize}
  \item [(iv)] (graded Leibniz rule for bracket): $d(\{a,b\})=\{d(a),b\}+(-1)^{|a|}\{a,d(b)\}$;
  \item [(v)] (graded Leibniz rule for product): $d(a\cdot b)=d(a)\cdot  b+(-1)^{|a|}a\cdot d(b)$,
  \end{itemize}
for any homogeneous elements $a,b\in A$, then $A$ is called a {\it differential graded Poisson algebra}, which is usually denoted by $(A,\cdot, \{,\},d)$, or simply by $(A, \{,\},d)$ or $A$ if no confusions arise.
\end{defn}

Now we give some examples of differential graded Poisson algebras.\begin{exa}\label{eg1}
{\rm The following are some trivial examples of differential graded Poisson algebras.
\begin{enumerate}
  \item Let $(A, \cdot,\{,\},d)$ be any differential graded Poisson algebra. Then $(A^0,\cdot)$ is a usual commutative algebra; $(A^0, \{,\})$ is a usual Lie algebra  and $(A^0,\cdot, \{,\})$ is a usual Poisson algebra. Conversely, any commutative algebra can be considered as a differential graded Poisson algebra concentrated in degree 0 with trivial differential and trivial Poisson bracket; any Lie algebra can be considered as a differential graded Poisson algebra concentrated in degree 0 with trivial differential and trivial product; and any Poisson algebra can be considered as a differential graded Poisson algebra concentrated in degree 0 with trivial differential.
  \item Differential graded commutative algebras are differential graded Poisson algebras with trivial Poisson bracket.
  \item Graded Poisson algebras are differential graded Poisson algebras with trivial differential.
  \item Differential graded Lie algebras are differential graded Poisson algebras with trivial product.
\end{enumerate}}
\end{exa}

\begin{exa}\label{con}
{\rm Let \[A=\frac{\Bbbk\left<x_1,x_2\right>}{(x_1^2,x_1x_2-x_2x_1)}\]with $|x_1|=1, |x_2|=2$. Then $A$ is a differential graded Poisson algebra with
\begin{align*}
d(x_1)=\lambda x_2, d(x_2)=\mu x_1x_2,\\
\{x_1,x_2\}=px_1x_2,\end{align*}
where $\lambda,\mu, p\in\Bbbk$ and $\lambda\mu=0$.}
\end{exa}

\begin{exa}\label{eg2}
{\rm Let $(V,d)$ be a differential $\mathbb{Z}$-graded $\Bbbk$-vector space and $W:=\Hom^{\ast}_{\Bbbk}(V,V)$ be the total Hom complex.  Set
$$A=\frac{T(W)}{(f\otimes g-(-1)^{|f||g|}g\otimes f)},$$ where $T(W)$ is the tensor algebra over $\Bbbk$, and $f,g\in W$ are any homogeneous elements. Then $(A,\otimes,\{,\},\partial)$ is a differential graded Poisson algebra where
\begin{align*}
\partial(f):=d\circ f-(-1)^{|f|}f\circ d, \\
\{f, g\}:=f\circ g-(-1)^{|f||g|}g\circ f.\end{align*}}
\end{exa}

\begin{exa}\label{eg3}
{\rm Let $E$ be a holomorphic vector bundle on a complex variety $M$, and $\mathcal{A}^{p,q}(E)$ be the sheaf of $C^{\infty}$ $(p,q)$-differential forms with values in $E$.

Set
$$V=\bigoplus_{q}\Gamma(M,\mathcal{A}^{0,q}(\End(E)))[-q]$$and
$$A=\frac{T(V)}{(u\otimes v-(-1)^{|u||v|}v\otimes u)}.$$
Here $u,v\in V$ are any homogeneous elements. Then $(A,\otimes,\{,\},d)$ is a differential graded Poisson algebra via
\begin{align*}
d(\phi e):=(\overline{\partial}\phi)e, \\
\{\phi e, \psi g\}:=\phi\wedge\psi[e,g],
\end{align*}
where $\overline{\partial}$ is the Dolbeault differential, $e,g$ are local holomorphic sections of $\End(E)$ and $\phi, \psi$ are differential forms.}\end{exa}

Note that Examples \ref{eg2} and \ref{eg3} are the graded symmetric algebras of differential graded Lie algebras. In fact, we have a general result:
\begin{prop}\label{p1}
Let $(L, d, [,])$ be a differential $\mathbb{Z}$-graded Lie algebra. Then $\mathbb{S}L$ is a differential $\mathbb{Z}$-graded Poisson algebra, where $\mathbb{S}L$ is the graded symmetric algebra of $L$.
\end{prop}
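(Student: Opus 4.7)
The plan is to extend the given differential $d$ and Lie bracket $[,]$ on $L$ to the free graded commutative algebra $\mathbb{S}L$ by forcing the required Leibniz/biderivation rules, and then to verify the axioms of a differential graded Poisson algebra. Since graded commutativity (axiom (ii)) is built into the definition of $\mathbb{S}L$, the whole problem splits into (a) constructing the differential, (b) constructing the bracket, and (c) checking Jacobi and the two Leibniz rules.

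For the differential I first extend $d$ to the tensor algebra $T(L)$ as the unique graded derivation of degree $1$ with prescribed values on $L$, then verify that the graded symmetric relations $x\otimes y-(-1)^{|x||y|}y\otimes x$ lie in the kernel of the induced map (a one-line sign calculation), so that $d$ descends to $\mathbb{S}L$. The identity $d^2=0$ is checked on generators and then propagates to all of $\mathbb{S}L$ because the square of a graded derivation of odd degree is itself a derivation. This gives axiom (v), and $d^2=0$ by construction.

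For the bracket I define $\{x,y\}:=[x,y]$ for $x,y\in L$ and extend inductively: for fixed $a\in L$, define $\{a,-\}$ as the unique graded derivation on $\mathbb{S}L$ of degree $|a|$ extending $[a,-]$, using (iii) as the recipe; then for fixed $b\in\mathbb{S}L$, extend $\{-,b\}$ similarly using (iii). Consistency of the two extensions (i.e.\ well-definedness of $\{-,-\}:\mathbb{S}L\otimes\mathbb{S}L\to\mathbb{S}L$) and graded antisymmetry (axiom (i), first clause) are checked by induction on the total length of monomials, reducing in each step to the antisymmetry of $[,]$ on $L$. Axiom (iii) holds by construction.

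The main obstacle is the graded Jacobi identity and the Leibniz rule (iv) for the bracket with respect to $d$. Both are handled by the same strategy: fix two of the three arguments in $L$ (resp.\ fix one argument in $L$) and observe that each side of the identity, as a function of the remaining argument, is a graded derivation on $\mathbb{S}L$ of the same degree. Two graded derivations agreeing on a generating set agree everywhere, so the identity reduces to its restriction to $L\otimes L\otimes L$ (resp.\ $L\otimes L$), where it is precisely the graded Jacobi identity (resp.\ the Leibniz rule for $d$ with respect to $[,]$) already assumed for $L$. The only delicate point is the sign bookkeeping, in particular making sure that when one passes $\{a,-\}$ across a product one incurs the correct Koszul sign $(-1)^{|a||b|}$; once those signs are tracked carefully the induction closes and all five axioms of Definition \ref{dgpa} are verified.
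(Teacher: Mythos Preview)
Your argument is correct and self-contained: you build the differential and the bracket on $\mathbb{S}L$ directly from the free graded-commutative structure, and then verify each axiom by the standard ``derivations agreeing on generators agree everywhere'' induction. All the sign bookkeeping you flag is indeed the only thing that needs care, and it works out.

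The paper takes a different, more conceptual route. It passes through the universal enveloping algebra $\mathbf{U}(L)$: the canonical PBW filtration on $\mathbf{U}(L)$ has associated graded $\mathbb{S}L$, and since the associated graded is (graded-)commutative, the commutator in $\mathbf{U}(L)$ induces a Poisson bracket on $\mathbb{S}L$ extending $[,]$ on $L$. The Jacobi identity, biderivation property, and Leibniz rule for $d$ are then inherited for free from the corresponding identities in $\mathbf{U}(L)$, rather than being checked by induction on monomial length. Your approach is more elementary and avoids invoking PBW; the paper's approach is shorter but presupposes the reader knows the filtered-to-Poisson passage and that $\operatorname{gr}\mathbf{U}(L)\cong\mathbb{S}L$.
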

\begin{proof}
Denote $B:=\mathbf{U}(L)$, the graded universal enveloping algebra of $L$. Then $B$ has a canonical filtration such that the associated graded algebra is $\mathbb{S}L$ and $(\mathbb{S}L,d)$ is a differential graded algebra. Moreover, we can define the Poisson bracket by
$$\{a,b\}:=[a,b]$$ for any $a,b\in L$. Then $(\mathbb{S}L,d,\{,\})$ is a differential $\mathbb{Z}$-graded Poisson algebra.
\end{proof}

\begin{exa}\label{op}
{\rm Let $(A,\cdot, \{,\},d)$ be any differential graded Poisson algebra. Then $(A^{op},\cdot_{op}, \{,\}_{op},d_{op})$ is also a differential graded Poisson algebra, where
\begin{align*}
a\cdot_{op}b:=(-1)^{|a||b|}b\cdot a=a\cdot b,\\
d_{op}:=d,\\
\{a,b\}_{op}:=(-1)^{|a||b|}\{b,a\}=-\{a,b\},
\end{align*}
for any homogeneous elements $a,b\in A$.}
\end{exa}

\begin{prop}\label{tensor}
Let $(A,\cdot, \{,\}_A, d_A)$ and $(B,\ast,\{,\}_B,d_B)$ be any two differential graded Poisson algebras. Then $(A\otimes B,\star, \{,\},d)$ is a differential graded Poisson algebra, where
\begin{align*}
(a\otimes b)\star (a'\otimes b'):=(-1)^{|a'||b|}(a\cdot a')\otimes (b\ast b'),\\
d(a\otimes b):=d_A(a)\otimes b+(-1)^{|a|}a\otimes d_B(b),\\
\{a\otimes b, a'\otimes b'\}:=(-1)^{|a'||b|}(\{a,a'\}_A\otimes (b\ast b')+(a\cdot a')\otimes\{b,b'\}_B),\end{align*}
for any homogeneous elements $a,a'\in A$ and $b,b'\in B$. Moreover, the category of differential graded Poisson algebras is a symmetric monoidal category with a left and right identity $\Bbbk$.
\end{prop}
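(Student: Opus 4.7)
The plan is to verify the five axioms of Definition \ref{dgpa} for the triple $(A\otimes B, \star, \{,\}, d)$ one at a time, and then assemble the monoidal structure from these componentwise checks. Since every operation on $A\otimes B$ is defined by combining operations on $A$ and $B$ with Koszul signs, every identity will reduce after expansion to identities already available on each factor together with sign bookkeeping. I would state this explicitly at the outset: the proof is a routine but careful sign-tracking exercise.

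First I would handle the underlying DG commutative algebra $(A\otimes B, \star, d)$. Associativity and unitality of $\star$ are immediate; graded commutativity $(a\otimes b)\star(a'\otimes b')=(-1)^{(|a|+|b|)(|a'|+|b'|)}(a'\otimes b')\star(a\otimes b)$ follows from graded commutativity of $\cdot$ and $\ast$ together with the sign factor $(-1)^{|a'||b|}$ in the definition of $\star$. That $d^2=0$ and $d$ is a graded derivation for $\star$ is the standard tensor product of DG algebras computation. Next, for the bracket, graded antisymmetry of $\{,\}$ follows from graded antisymmetry of $\{,\}_A$ and $\{,\}_B$, the graded commutativity of $\cdot$ and $\ast$, and careful collection of the Koszul signs produced by swapping $a\otimes b$ with $a'\otimes b'$. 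The graded Leibniz rule for $d$ with respect to $\{,\}$ splits into two summands, each of which follows directly from the Leibniz rules (iv) and (v) in $A$ and in $B$.

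The two remaining identities are where essentially all the sign work lives. For the biderivation property, I would expand
\[
\{a\otimes b,\,(a'\otimes b')\star(a''\otimes b'')\}
\]
using the definition of $\star$, then apply the biderivation properties in $A$ and $B$ to $\{a,a'a''\}_A$ and $\{b,b'\ast b''\}_B$, and finally regroup the resulting four terms to match $\{a\otimes b,a'\otimes b'\}\star(a''\otimes b'')+(-1)^{(|a|+|b|)(|a'|+|b'|)}(a'\otimes b')\star\{a\otimes b,a''\otimes b''\}$. For the graded Jacobi identity, I would likewise expand both sides of
\[
\{a\otimes b,\{a'\otimes b',a''\otimes b''\}\}=\{\{a\otimes b,a'\otimes b'\},a''\otimes b''\}+(-1)^{(|a|+|b|)(|a'|+|b'|)}\{a'\otimes b',\{a\otimes b,a''\otimes b''\}\},
\]
and show that after applying the Jacobi identities in $A$ and $B$ to the ``pure'' terms and the biderivation property in $A$ and $B$ to the ``mixed'' terms, every resulting monomial cancels or matches with the correct sign. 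This step is the main obstacle; the signs will look unpleasant but organize themselves because the definition of $\{,\}$ is symmetric in its treatment of the two factors.

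Finally, for the monoidal statement, $\Bbbk$ sits in $\mathbf{DGPA}$ in degree zero with trivial differential and trivial bracket (Example \ref{eg1}(1)), and the canonical isomorphisms $\Bbbk\otimes A\cong A\cong A\otimes\Bbbk$ are immediately DG Poisson algebra isomorphisms because every sign factor $(-1)^{0\cdot|a|}$ is trivial. The associator inherited from the tensor product of graded vector spaces is a DG Poisson algebra isomorphism by a direct comparison on generators. The braiding $\tau_{A,B}(a\otimes b)=(-1)^{|a||b|}b\otimes a$ is the standard symmetric monoidal structure on graded algebras; its compatibility with the tensor product Poisson bracket is visible from the symmetric form of the definition of $\{,\}$ above. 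Naturality, the hexagon and pentagon axioms all then reduce to the corresponding statements for graded vector spaces, which are classical.
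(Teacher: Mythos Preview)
Your proposal is correct and follows essentially the same route as the paper: a direct verification of each axiom of Definition~\ref{dgpa} by expanding the definitions and reducing to the corresponding identities in $A$ and $B$, with the Jacobi identity singled out as the main sign-bookkeeping burden, followed by checking that the Koszul braiding $a\otimes b\mapsto(-1)^{|a||b|}b\otimes a$ is a DG Poisson isomorphism. The only difference is one of emphasis: the paper writes out the Jacobi and biderivation computations in full detail and for the monoidal statement checks only the braiding (taking the associator, unitors, and coherence axioms as inherited from graded vector spaces), whereas you sketch the axiom checks and are more explicit about listing the remaining monoidal-category ingredients.
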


\begin{proof}
Let $a\otimes b, a'\otimes b', a''\otimes b''$ be any homogeneous elements of $A\otimes B$. We will check each axiom of Definition \ref{dgpa}. For the sake of simplicity, we omit $\cdot$, $\ast$ and the subscripts here.

The graded antisymmetry is immediate from
\begin{eqnarray*}
\{a\otimes b, a'\otimes b'\}&=&(-1)^{|a'||b|}((aa')\otimes\{b,b'\}+\{a,a'\}\otimes (bb'))\\
&=&-(-1)^{|a'||b|+|a||a'|+|b||b'|}((a'a)\otimes\{b',b\}+\{a',a\}\otimes (b'b))\\
&=&-(-1)^{|a\otimes b||a'\otimes b|}((-1)^{|a||b'|}(a'a)\otimes\{b',b\}+\{a',a\}\otimes (b'b))\\
&=&-(-1)^{|a\otimes b||a'\otimes b|}\{a'\otimes b',a\otimes b\}.
\end{eqnarray*}
Note that
\begin{eqnarray*}
\{a\otimes b,\{a'\otimes b',a''\otimes b''\}\}
&=&(-1)^{|a''||b'|}\{a\otimes b, (a'a'')\otimes \{b',b''\}+\{a',a''\}\otimes (b'b'')\}\\
&=& (-1)^{|a''||b'|}(\{a\otimes b,(a'a'')\otimes \{b',b''\}\}+\{a\otimes b,\{a',a''\}\otimes (b'b''))\\
&=&(-1)^{|a''||b'|+|b|(|a'|+|a''|)}((aa'a'')\otimes\{b,\{b',b''\}\}+\{a,a'a''\}\otimes (b\{b',b''\}\\
&&+(a\{a',a''\})\otimes\{b,b'b''\}+\{a,\{a',a''\}\}\otimes(bb'b'')),
\end{eqnarray*}
\begin{eqnarray*}
\{\{a\otimes b, a'\otimes b'\},a''\otimes b''\}
&=&(-1)^{|a'||b|}\{(aa')\otimes \{b,b'\}+\{a,a'\}\otimes (bb'), a''\otimes b''\}\\
&=&(-1)^{|a'||b|}(\{(aa')\otimes \{b,b'\}, a''\otimes b''\}+\{\{a,a'\}\otimes (bb'), a''\otimes b''\})\\
&=&(-1)^{|a'||b|+|a''|(|b|+|b'|)}((aa'a'')\otimes \{\{b,b'\},b''\}+\{aa', a''\}\otimes (\{b,b'\}b'')\\
&&+(\{a,a'\}a'')\otimes \{bb',b''\}+\{\{a,a'\},a''\}\otimes bb'b''),
\end{eqnarray*}
\begin{eqnarray*}
&&(-1)^{(|a|+|b|)(|a'|+|b'|)}\{a'\otimes b',\{a\otimes b,a''\otimes b''\}\}\\
&=&(-1)^{(|a|+|b|)(|a'|+|b'|)+|b||a''|}\{a'\otimes b',(aa'')\otimes\{b,b''\}+\{a,a''\}\otimes (bb'')\}\\
&=&(-1)^{(|a|+|b|)(|a'|+|b'|)+|b||a''|+|b'|(|a|+|a''|)}((a'aa'')\otimes \{b',\{b,b''\}\}\\
&&+\{a',aa''\}\otimes (b'\{b,b''\})+(a'\{a,a''\})\otimes\{b',bb''\}+\{a',\{a,a''\}\}\otimes (b'bb'')),
\end{eqnarray*}
\begin{eqnarray*}
&&(-1)^{(|a|+|b|)(|a'|+|b'|)+|b||a''|+|b'|(|a|+|a''|)}((a'aa'')\otimes \{b',\{b,b''\}\}\\
&=&(-1)^{|a'||b|+|a''||b|+|a''||b'|}((aa'a'')\otimes(\{b,\{b',b''\}\}-\{\{b,b'\},b''\}))
\end{eqnarray*}
and
\begin{eqnarray*}
&&(-1)^{(|a|+|b|)(|a'|+|b'|)+|b||a''|+|b'|(|a|+|a''|)}(\{a',\{a,a''\}\}\otimes (b'bb'')\\
&=&(-1)^{|a'||b|+|a''||b|+|a''||b'|}((\{a,\{a',a''\}\}-\{\{a,a'\},a''\})\otimes(bb'b'')),
\end{eqnarray*}
thus in order to prove
$$\{a\otimes b,\{a'\otimes b',a''\otimes b''\}\}=\{\{a\otimes b,a'\otimes b'\},a''\otimes b''\}+(-1)^{(|a|+|b|)(|a'|+|b'|)}\{a'\otimes b',\{a\otimes b,a''\otimes b''\}\},$$it suffices to prove
\begin{eqnarray*}
&&(-1)^{|a'||b|+|a''||b|+|a''||b'|}(\{a,a'a''\}\otimes b\{b',b''\}+a\{a',a''\}\otimes \{b,b'b''\})\\
&=&(-1)^{|a'||b|+|a''||b|+|a''||b'|}[(\{aa',a''\}\otimes\{b,b'\}b''+\{a,a'\}a''\otimes \{bb',b''\})+(-1)^{|a||a'|+|b||b'|}\{a',aa''\}\otimes b'\{b,b''\}\\
&&+(-1)^{|a||a'|+|b||b'|}a'\{a,a''\}\otimes \{b',bb''\}].
\end{eqnarray*}
Note that
\begin{eqnarray*}
&&(-1)^{|a'||b|+|a''||b|+|a''||b'|}[(\{aa',a''\}\otimes\{b,b'\}b''+\{a,a'\}a''\otimes \{bb',b''\})+(-1)^{|a||a'|+|b||b'|}\{a',aa''\}\otimes b'\{b,b''\}\\
&&+(-1)^{|a||a'|+|b||b'|}a'\{a,a''\}\otimes \{b',bb''\}]\\
&=&(-1)^{|a'||b|+|a''||b|+|a''||b'|}[(a\{a',a''\}+(-1)^{|a||a'|}a'\{a,a''\})\otimes \{b,b'\}b''+\{a,a'\}a''\otimes (b\{b',b''\}+(-1)^{|b||b'|}b'\{b,b''\} )\\
&&+(-1)^{|a||a'|+|b||b'|}((\{a',a\}a''+(-1)^{|a||a'|}a\{a',a''\})\otimes b'\{b.b''\}+a'\{a,a''\}\otimes (\{b'b\}b''+(-1)^{|b||b'|}b\{b',b''\}))]\\
&=&(-1)^{|a'||b|+|a''||b|+|a''||b'|}[a\{a',a''\}\otimes \{b,b'\}b''+(-1)^{|a||a'|}a'\{a,a''\}\otimes \{b,b'\}b''+\{a,a'\}a''\otimes b\{b',b''\}\\
&&+(-1)^{|b||b'|}\{a,a'\}a''\otimes b'\{b,b''\}+(-1)^{|a||a'|+|b||b'|}\{a',a\}a''\otimes b'\{b,b''\}+(-1)^{|b||b'|}a\{a',a''\}\otimes b'\{b,b''\}\\
&&+(-1)^{|a||a'|+|b||b'|}a'\{a,a''\}\otimes \{b',b\}b''+(-1)^{|a||a'|}a'\{a,a''\}\otimes b\{b',b''\}]\\
&=&(-1)^{|a'||b|+|a''||b|+|a''||b'|}[(a\{a',a''\}\otimes \{b,b'\}b''+(-1)^{|b||b'|}a\{a',a''\}\otimes b'\{b,b''\})\\
&&+(\{a,a'\}a''\otimes b\{b',b''\}+(-1)^{|a||a'|}a'\{a,a''\}\otimes b\{b',b''\})]\\
&=&(-1)^{|a'||b|+|a''||b|+|a''||b'|}(\{a,a'a''\}\otimes b\{b',b''\}+a\{a',a''\}\otimes \{b,b'b''\}).
\end{eqnarray*}
Therefore, $(A\otimes B, \{,\})$ is a graded Lie algebra.

$(A\otimes B, \star)$ is graded commutative since
\begin{eqnarray*}
(a\otimes b)\star (a'\otimes b')&=&(-1)^{|a'||b|}aa'\otimes bb'\\
&=&(-1)^{|a'||b|+|a||a'|+|b||b'|}a'a\otimes b'b\\
&=&(-1)^{(|a|+|b|)(|a'|+|b'|)}(a'\otimes b')\star (a\otimes b).\end{eqnarray*}

Biderivation property follows from
\begin{eqnarray*}
\{a\otimes b,(a'\otimes b')\star (a''\otimes b'')\}&=&
\{a\otimes b,(-1)^{|a''||b'|}a'a''\otimes b'b''\}\\
&=&(-1)^{|a''||b'|+|b||a'|+|b||a''|}(aa'a''\otimes \{b,b'b''\}+\{a,a'a''\}\otimes bb'b''),
\end{eqnarray*}
\begin{eqnarray*}
\{a\otimes b,a'\otimes b'\}\star(a''\otimes b'')&=&(-1)^{|a'||b|}(aa'\otimes\{b,b'\}+\{a,a'\}\otimes bb')\star(a''\otimes b'')\\
&=&(-1)^{|a''||b'|+|b||a'|+|b||a''|}(aa'a''\otimes \{b,b'\}b''+\{a,a'\}a''\otimes bb'b'')
\end{eqnarray*}
and
\begin{eqnarray*}
(-1)^{(|a\otimes b||a'\otimes b'|}(a'\otimes b')\star\{a\otimes b,a''\otimes b''\}
&=&(-1)^{(|a|+|b|)(|a'|+|b'|)}(a'\otimes b')\star\{a\otimes b,a''\otimes b''\}\\&=&(-1)^{|a''||b'|+|b||a'|+|b||a''|}(aa'a''\otimes \{b,b'b''\}-aa'a''\otimes\{b,b'\}b''\\
&&-\{a,a'\}a''\otimes bb'b''+\{a,a'a''\}\otimes bb'b'').
\end{eqnarray*}

$d^2=0$ follows from
\begin{eqnarray*}
d(d(a\otimes b))&=&d(da\otimes b+(-1)^{|a|}a\otimes db)\\
&=&d^2a\otimes b+(-1)^{|a|+1}da\times db+(-1)^{|a|}da\otimes db_(-1)^{|a|+|a|}a\otimes d^2b\\
&=&0.
\end{eqnarray*}
The graded Leibniz rule for product is true since
\begin{eqnarray*}
d((a\otimes b)\star(a'\otimes b'))&=&(-1)^{|a'||b|}d(aa'\otimes bb')\\
&=&(-1)^{|a'||b|}d(aa')\otimes bb'+(-1)^{|a'||b|+|a|+|a'|}aa'\otimes d(bb')\\
&=&(-1)^{|a'||b|}(da)a'\otimes bb'+(-1)^{|a'||b|+|a|}a(da')\otimes bb'\\
&&+(-1)^{|a'||b|+|a|+|a'|}aa'\otimes(db)b'+(-1)^{|a'||b|+|a|+|a'|+|b|}aa'\otimes b(db')\\
&=&((-1)^{|a'||b|}(da)a'\otimes bb'+(-1)^{|a'||b|+|a|+|a'|}aa'\otimes(db)b')\\
&&+((-1)^{|a'||b|+|a|}a(da')\otimes bb'+(-1)^{|a'||b|+|a|+|a'|+|b|}aa'\otimes b(db'))\\
&=&d(a\otimes b)\star (a'\otimes b')+(-1)^{|a|+|b|}(a\otimes b)\star d(a'\otimes b').
\end{eqnarray*}
The graded Leibniz rule for bracket is true since
\begin{eqnarray*}
d\{a\otimes b,a'\otimes b'\}&=&(-1)^{|a'||b|}d(aa'\otimes \{b,b'\}+\{a,a'\}\otimes bb')\\
&=&(-1)^{|a'||b|}d(aa')\otimes \{b,b'\}+(-1)^{|a'||b|+|a|+|a'|}aa'\otimes d(\{b,b'\})\\
&&+(-1)^{|a'||b|}d(\{a,a'\})\otimes bb'+(-1)^{|a'||b|+|a|+|a'|}\{a,a'\}\otimes d(\{b,b'\})\\
&=&((-1)^{|a'||b|}(da)a'\otimes\{b,b'\}+(-1)^{|a'||b|+|a|+|a'|}aa'\otimes \{db,b'\}\\
&&+(-1)^{|a'||b|}\{da,a'\}\otimes bb'+(-1)^{|a'||b|+|a|+|a'|}\{a,a'\}\otimes (db)b')\\
&&+((-1)^{|a'||b|+|a|}a(da')\otimes\{b,b'\}+(-1)^{|a'||b|+|a|+|a'|+|b|}aa'\otimes\{b,db'\}\\
&&+(-1)^{|a'||b|+|a|}\{a,(da')\}\otimes bb'+(-1)^{|a'||b|+|a|+|a'|+|b|}\{a,a'\}\otimes b(db'))\\
&=&\{d(a\otimes b),a'\otimes b'\}+(-1)^{|a|+|b|}\{a\otimes b,d(a'\otimes b')\}.
\end{eqnarray*}
Therefore, $(A\otimes B,\star, \{,\},d)$ is a differential graded Poisson algebra.

In order to prove that the category of differential graded Poisson algebras is a symmetric monoidal category with a left and right identity $\Bbbk$, it suffices to prove the following map
\[\varphi: A\otimes B\to B\otimes A\]given by
\[\varphi(a\otimes b)=(-1)^{|a||b|}b\otimes a\] is a differential graded Poisson algebra isomorphism, where $A$ and $B$ are any two differential graded Poisson algebras and $a\in A, b\in B$ are any homogeneous elements. Note that $\varphi \circ\varphi=1$, we only need to check that $\varphi$ is a differential graded Poisson algebra map, which follows from
\begin{eqnarray*}
\varphi((a\otimes b)(a'\otimes b'))&=&(-1)^{|a'||b|}\varphi(aa'\otimes bb')\\
&=&(-1)^{|a'||b|+(|a|+|a'|)(|b|+|b'|)}bb'\otimes aa'\\
&=&(-1)^{|a||b|+|a'||b'|+|a||b'|}bb'\otimes aa'\\
&=&(-1)^{|a||b|}(b\otimes a)(-1)^{|a'||b'|}(b'\otimes a')\\
&=&\varphi(a\otimes b)\varphi(a'\otimes b'),
\end{eqnarray*}
\begin{eqnarray*}
d\varphi(a\otimes b)&=&(-1)^{|a||b|}d(b\otimes a)\\
&=&(-1)^{|a||b|}d(b)\otimes a+(-1)^{|a||b|+|b|}b\otimes d(a)\\
&=&(-1)^{|a|+|a|(|b|+1)}d(b)\otimes a+(-1)^{(|a|+1)|b|}b\otimes d(a)\\
&=&\varphi(d(a)\otimes b)+(-1)^{|a|}\varphi(a\otimes d(b))\\
&=&\varphi(d(a)\otimes b+(-1)^{|a|}a\otimes d(b))\\
&=&\varphi d(a\otimes b)
\end{eqnarray*}
and
\begin{eqnarray*}
\varphi(\{a\otimes b,a'\otimes b'\})&=&(-1)^{|a'||b|}\varphi(aa'\otimes\{b,b'\}+\{a,a'\}\otimes bb')\\
&=&(-1)^{|a'||b|+(|a|+|a'|)(|b|+|b'|)}(\{b,b'\}\otimes aa'+bb'\otimes \{a,a'\})\\
&=&(-1)^{|a||b|+|a'||b'|+|a||b'|}(\{b,b'\}\otimes aa'+bb'\otimes \{a,a'\})\\
&=&\{(-1)^{|a||b|}b\otimes a, (-1)^{|a'||b'|}b'\otimes a'\}\\
&=&\{\varphi(a\otimes b),\varphi(a'\otimes b')\}.
\end{eqnarray*}
\end{proof}

From the constructions of opposite algebra and tensor product of differential graded Poisson algebras, we have the following observation.

\begin{prop}\label{tensoropp}
{\it Let $A,B$ be any two differential graded Poisson algebras. Then $(A\otimes B)^{op}=A^{op}\otimes B^{op}.$ $\hfill\Box$}\end{prop}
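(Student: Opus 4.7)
The strategy is simply to compare, term by term, the four pieces of structure (underlying graded vector space, product, differential, Poisson bracket) on the two differential graded Poisson algebras $(A\otimes B)^{op}$ and $A^{op}\otimes B^{op}$, using the formulas from Example~\ref{op} and Proposition~\ref{tensor}. Both algebras live on the same underlying graded vector space $A\otimes B$, so the content of the proposition is that their three structure maps coincide.

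First I would check the products. Unwinding the definition of the opposite product in Example~\ref{op}, the product on $(A\otimes B)^{op}$ equals the original product on $A\otimes B$, namely $(a\otimes b)\star(a'\otimes b')=(-1)^{|a'||b|}(a\cdot a')\otimes(b\ast b')$. On the other hand, in $A^{op}\otimes B^{op}$ the tensor-product formula of Proposition~\ref{tensor} gives $(a\otimes b)\star'(a'\otimes b')=(-1)^{|a'||b|}(a\cdot_{op}a')\otimes(b\ast_{op} b')$, and since $\cdot_{op}=\cdot$ and $\ast_{op}=\ast$ by graded commutativity, these two products agree. The differential check is immediate: both $(A\otimes B)^{op}$ and $A^{op}\otimes B^{op}$ carry the differential $a\otimes b\mapsto d_A(a)\otimes b+(-1)^{|a|}a\otimes d_B(b)$, because the opposite operation fixes the differential in each factor and the tensor-product formula is the same.

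The only step requiring attention is the bracket. On $(A\otimes B)^{op}$, the bracket is the negative of the bracket on $A\otimes B$, i.e.
\[
\{a\otimes b,a'\otimes b'\}_{op}=-(-1)^{|a'||b|}\bigl(\{a,a'\}_A\otimes(b\ast b')+(a\cdot a')\otimes\{b,b'\}_B\bigr).
\]
On $A^{op}\otimes B^{op}$, applying the tensor-product bracket formula to the brackets $\{\,,\,\}_{op,A}=-\{\,,\,\}_A$ and $\{\,,\,\}_{op,B}=-\{\,,\,\}_B$ produces exactly the same expression, with the two minus signs factored out. So the brackets agree, and this completes the proof.

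There is really no obstacle here: the only thing one must be careful about is checking that the sign conventions of Example~\ref{op} (which uses graded commutativity to identify $\cdot_{op}$ with $\cdot$ and negates the bracket) match with the sign conventions of Proposition~\ref{tensor}; since no additional Koszul signs are introduced by passing to the opposite on each factor, the identification is on the nose rather than merely up to isomorphism, which justifies writing ``$=$'' rather than ``$\cong$'' in the statement.
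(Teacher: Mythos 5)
Your verification is correct and matches the paper's intent: the paper states this proposition without proof as an immediate consequence of the constructions in Example~\ref{op} and Proposition~\ref{tensor}, and your term-by-term comparison (product and differential unchanged, both brackets equal to the negative of the bracket on $A\otimes B$) is exactly the routine check being left to the reader. In particular your sign bookkeeping for the bracket, where the single minus sign from each factor's opposite bracket reproduces the overall negation, is right, so the identification is indeed an equality of structures on the same underlying space.
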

\begin{defn}\label{sub}
Let $A$ be a differential $\mathbb{Z}$-graded Poisson algebra and $B$ be a $\mathbb{Z}$-graded subspace of $A$ that is closed under multiplication.
\begin{itemize}
  \item[(i)] If $B$ is a graded subalgebra of $A$ satisfying $d(B)\subseteq B$ and $\{B,B\}\subseteq B$, then $B$ is called a {\it differential $\mathbb{Z}$-graded Poisson subalgebra} of $A$.
  \item[(ii)]If $d(B), B\cdot A, A\cdot B$ and $\{A,B\}$ are all contained in $B$, then $B$ is called a {\it differential $\mathbb{Z}$-graded Poisson ideal} of $A$. \end{itemize}
\end{defn}

\begin{prop}\label{2.7}
Let $A$ be a differential $\mathbb{Z}$-graded Poisson algebra and $B\subseteq A$ be a differential $\mathbb{Z}$-graded Poisson ideal of $A$. Then the quotient algebra $A/B$ has a natural differential $\mathbb{Z}$-graded Poisson algebra structure:
$$d_{A/B}(a+B):=d_A(a)+B\;{\rm and}\;\{a+B,a'+B\}_{A/B}:=\{a,a'\}_A+B.$$
$\hfill\Box$\end{prop}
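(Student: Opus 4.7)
The plan is to split the argument into two halves: first show that the proposed operations on $A/B$ are well-defined (this is where all the hypotheses on the ideal $B$ get used), and then observe that every axiom of Definition \ref{dgpa} descends automatically from $A$ because the quotient map $\pi\colon A \to A/B$ is, by construction, multiplicative and degree-preserving.

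For well-definedness of $d_{A/B}$, I would take $a, a' \in A$ with $a - a' \in B$, and use the hypothesis $d(B) \subseteq B$ to conclude that $d_A(a) - d_A(a') = d_A(a-a') \in B$, so $d_A(a) + B = d_A(a') + B$. For the bracket, suppose $a - a' \in B$ and $c - c' \in B$. I would write
\[
\{a,c\}_A - \{a',c'\}_A = \{a-a',\,c\}_A + \{a',\,c-c'\}_A.
\]
The first summand lies in $\{B, A\}$, and by graded antisymmetry (Definition \ref{dgpa}(i)) $\{B, A\} = \{A, B\}\subseteq B$; the second summand lies in $\{A, B\} \subseteq B$ directly. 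Hence $\{,\}_{A/B}$ is well-defined on $A/B \otimes A/B$. Well-definedness of the inherited product follows similarly from $B \cdot A, A \cdot B \subseteq B$, and is standard.

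For the axioms, the key observation is the following: if $\pi\colon A \to A/B$ is the canonical surjection, then by definition $\pi \circ d_A = d_{A/B} \circ \pi$, $\pi(a)\cdot \pi(b) = \pi(a \cdot b)$, and $\{\pi(a), \pi(b)\}_{A/B} = \pi(\{a,b\}_A)$. Each axiom of Definition \ref{dgpa}, (i)--(v), is a polynomial identity in $d$, the product, and the bracket, with integer coefficients. Applying $\pi$ to the corresponding identity in $A$ and using the three compatibilities above immediately yields the identity in $A/B$; since $\pi$ is surjective, the identity holds on all of $A/B$. For instance, the graded Jacobi identity follows from
\[
\{\overline{a}, \{\overline{b}, \overline{c}\}\} = \pi\bigl(\{a,\{b,c\}\}\bigr) = \pi\bigl(\{\{a,b\},c\} + (-1)^{|a||b|}\{b,\{a,c\}\}\bigr),
\]
and the graded Leibniz rules for $d_{A/B}$ with respect to both the product and the bracket are verified in exactly the same way.

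The hypotheses on $B$ are used only in the well-definedness step; there is no serious obstacle, and I expect no step to be particularly delicate beyond the routine verification that $\{B, A\} \subseteq B$ is equivalent to $\{A, B\} \subseteq B$ via antisymmetry. This is why the authors leave the proof as an exercise (indicated by the $\hfill\Box$).
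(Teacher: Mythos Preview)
Your proposal is correct and is precisely the routine verification the authors have in mind; the paper gives no proof at all (the $\hfill\Box$ signals omission, consistent with their remark at the start of Section 2 that such proofs are routine checks). The only tiny addendum is that $d_{A/B}^2=0$ and the $\Bbbk$-linearity and degree conditions on $d_{A/B}$ and $\{,\}_{A/B}$ should also be noted, but these follow immediately from the same descent argument you give.
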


\begin{defn}\label{poissonmap}
Let $A$ and $B$ be two differential $\mathbb{Z}$-graded Poisson algebras. A graded algebra map $f:A\to B$ of degree $0$ is called a {\it differential $\mathbb{Z}$-graded Poisson algebra map} if $f\circ d_A=d_B\circ f$ and $f(\{a,b\}_A)=\{f(a),f(b)\}_B$ for all homogeneous elements $a,b\in A$. Moreover, we call $A$ and $B$ are {\it isomorphic} as differential $\mathbb{Z}$-graded Poisson algebras provided that there is a bijective differential $\mathbb{Z}$-graded Poisson algebra map from $A$ to $B$, denoted as $A\cong B$.

We denote by $\mathbf{DGPA}$ the category of differential graded Poisson algebras whose morphism space consists of differential graded Poisson algebra maps.
\end{defn}
\begin{prop}
Let $A, B\in\mathbf{DGPA}$ and $f: A\to B$ be a differential $\mathbb{Z}$-graded Poisson algebra map. Then the following are true:
\begin{enumerate}
\item $\ker f$ is a differential $\mathbb{Z}$-graded Poisson ideal of $A$.
\item $\im f$ is a differential $\mathbb{Z}$-graded Poisson subalgebra of $B$.
\item $A/\ker f \cong \im f$ as differential $\mathbb{Z}$-graded Poisson algebras.
\item If $A'\subseteq A$ is a differential $\mathbb{Z}$-graded Poisson subalgebra, then $f(A')$ is a differential $\mathbb{Z}$-graded Poisson subalgebra of $B$.
  \item If $I\subseteq A$ is a differential $\mathbb{Z}$-graded Poisson ideal, then $f(I)$ is a differential $\mathbb{Z}$-graded Poisson ideal of $f(A)$.
$\hfill\Box$
\end{enumerate}
\end{prop}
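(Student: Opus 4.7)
The plan is to treat this as a routine structural proposition and verify each claim directly from the definitions, exploiting the fact that the map $f$ respects all four pieces of structure on a differential graded Poisson algebra: the grading, the differential $d$, the associative product $\cdot$, and the bracket $\{,\}$. There is no deep content here; the main work is bookkeeping. I will handle each item in turn.

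For (1), I will show $\ker f$ is homogeneous, closed under $d$, and absorbs both the product and the bracket of $A$. Homogeneity is immediate because $f$ has degree $0$. If $a\in\ker f$ then $f(d_A(a))=d_B(f(a))=d_B(0)=0$, so $d_A(\ker f)\subseteq \ker f$. For any $a\in\ker f$ and $x\in A$, both $f(a\cdot x)=f(a)\cdot f(x)=0$ and $f(\{a,x\}_A)=\{f(a),f(x)\}_B=0$, and symmetrically for $x\cdot a$; thus $\ker f$ is a differential graded Poisson ideal in the sense of Definition \ref{sub}(ii). For (2), the image is homogeneous and, again by the compatibility of $f$ with $d$, $\cdot$, and $\{,\}$, it is closed under all three operations, hence a differential graded Poisson subalgebra.

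For (3), I will invoke Proposition \ref{2.7} to equip $A/\ker f$ with its quotient differential graded Poisson algebra structure, and then define
\[
\bar f: A/\ker f \longrightarrow \im f, \qquad \bar f(a+\ker f) := f(a).
\]
Well-definedness and injectivity are the usual first-isomorphism-theorem argument for associative algebras; surjectivity is tautological. It remains to check that $\bar f$ is a differential graded Poisson algebra map in the sense of Definition \ref{poissonmap}: it has degree $0$ and it intertwines $d$, $\cdot$, and $\{,\}$ because $f$ does and the quotient structures are defined to make the natural projection $\pi: A\to A/\ker f$ a differential graded Poisson algebra map. Combining these, $\bar f$ is the desired isomorphism.

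For (4) and (5), the arguments are parallel. If $A'\subseteq A$ is a differential graded Poisson subalgebra, then $f(A')$ is a homogeneous subspace of $B$, and closure under $d_B$, product, and bracket on $f(A')$ follows by writing $d_B f(a')=f(d_A(a'))\in f(A')$, $f(a')\cdot f(b')=f(a'\cdot b')\in f(A')$, and $\{f(a'),f(b')\}_B=f(\{a',b'\}_A)\in f(A')$. For (5), if $I\subseteq A$ is a differential graded Poisson ideal, the same computations show $d_B(f(I))\subseteq f(I)$, together with $f(I)\cdot f(A)\subseteq f(I)$, $f(A)\cdot f(I)\subseteq f(I)$, and $\{f(A),f(I)\}_B\subseteq f(I)$, confirming that $f(I)$ is a differential graded Poisson ideal of $f(A)=\im f$.

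The only mildly delicate point, and hence the one I would spell out carefully, is verifying in (3) that the Poisson bracket and differential on $A/\ker f$ are well-defined, but this is precisely the content of Proposition \ref{2.7}, so it can be quoted rather than redone. Everything else is a one-line application of the compatibility identities in Definition \ref{poissonmap}.
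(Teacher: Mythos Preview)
Your proposal is correct and is precisely the routine verification the paper has in mind; indeed, the paper omits the proof entirely (the statement ends with $\Box$ and the authors note at the start of Section~2 that they ``usually omit the proofs since most of them are routine checks''). Your write-up, invoking Definition~\ref{sub}, Proposition~\ref{2.7}, and Definition~\ref{poissonmap} at the appropriate points, is exactly what a full proof would look like.
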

\begin{prop}\label{2.12}
Let $A\in\mathbf{DGPA}$. Then the cohomology algebra $\mathcal{H}(A)$  is a $\mathbb{Z}$-graded Poisson algebra. Moreover, if $B\in\mathbf{DGPA}$ such that $A\cong B$ as differential $\mathbb{Z}$-graded Poisson algebras, then $\mathcal{H}(A)\cong \mathcal{H}(B)$ as $\mathbb{Z}$-graded Poisson algebras.
$\hfill\Box$\end{prop}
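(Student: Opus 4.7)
The plan is to verify in turn (a) that the differential $d$ is a derivation of the Poisson bracket as well as of the product, so both operations descend to $\mathcal{H}(A)=\ker d/\operatorname{im}d$; (b) that the descended operations satisfy the graded Poisson axioms because they already hold on the cocycles; and (c) that any differential graded Poisson map induces a graded Poisson map on cohomology, whence isomorphisms go to isomorphisms.

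For (a), first recall that from Definition \ref{dga} together with axiom (v) of Definition \ref{dgpa}, $\mathcal{H}(A)$ is already a $\mathbb{Z}$-graded (commutative) algebra via $[a]\cdot[b]:=[a\cdot b]$, the standard verification. For the bracket, I would define $\{[a],[b]\}:=[\{a,b\}]$ and check well-definedness in two steps. Let $a,b\in\ker d$ be homogeneous. Axiom (iv) gives
\[
d\{a,b\}=\{da,b\}+(-1)^{|a|}\{a,db\}=0,
\]
so $\{a,b\}\in\ker d$. If moreover $a=dc$ for some homogeneous $c$, axiom (iv) applied to $\{c,b\}$ yields
\[
d\{c,b\}=\{dc,b\}+(-1)^{|c|}\{c,db\}=\{a,b\},
\]
so $\{a,b\}\in\operatorname{im}d$; the symmetric case $b\in\operatorname{im}d$ is handled using graded antisymmetry. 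Hence $\{\,,\,\}$ descends unambiguously to $\mathcal{H}(A)$.

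For (b), graded antisymmetry, the graded Jacobi identity, graded commutativity and the biderivation property each hold identically on representatives and therefore pass to cohomology classes without any further work; this shows $\mathcal{H}(A)$ is a $\mathbb{Z}$-graded Poisson algebra.

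For (c), given a differential graded Poisson algebra map $f:A\to B$, the relation $f\circ d_A=d_B\circ f$ shows that $f$ sends $\ker d_A$ to $\ker d_B$ and $\operatorname{im}d_A$ to $\operatorname{im}d_B$, so it induces a well-defined degree-$0$ $\Bbbk$-linear map $\mathcal{H}(f):\mathcal{H}(A)\to\mathcal{H}(B)$. Multiplicativity of $\mathcal{H}(f)$ is standard, and Poisson compatibility is immediate from $f(\{a,b\}_A)=\{f(a),f(b)\}_B$. Functoriality ($\mathcal{H}(g\circ f)=\mathcal{H}(g)\circ\mathcal{H}(f)$ and $\mathcal{H}(\mathrm{id})=\mathrm{id}$) then implies that if $f$ is a differential graded Poisson isomorphism with inverse $g$, the map $\mathcal{H}(f)$ is a graded Poisson isomorphism with inverse $\mathcal{H}(g)$.

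No serious obstacle is expected; the only points requiring care are the sign in the well-definedness computation when the second argument is a coboundary (resolved via graded antisymmetry) and keeping the degrees of $d$, of the product, and of the bracket straight when applying axiom (iv). All Poisson axioms transfer ``for free'' because they are identities that hold on cocycles.
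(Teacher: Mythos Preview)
Your proposal is correct and is precisely the routine verification the paper has in mind; indeed, the paper omits the proof entirely (note the $\hfill\Box$ immediately after the statement and the blanket remark at the start of Section~2 that ``we usually omit the proofs since most of them are routine checks''). Your treatment of well-definedness via axiom (iv), the passage of the Poisson identities to cohomology, and functoriality of $\mathcal{H}$ is exactly the expected argument.
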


\subsection{Differential graded Poisson modules}
In this subsection, we define the {\it differential $\mathbb{Z}$-graded Poisson modules} over a differential $\mathbb{Z}$-graded Poisson algebra and study some basic properties of such modules.

\begin{defn}
Let $(A,d)\in\mathbf{DGA}$ and $M$ a left $\mathbb{Z}$-graded module over $A$. We call $M$ is a left {\it differential graded module} over $A$ provided that there is a $\Bbbk$-linear map $\partial: M\to M$ of degree $1$ such that $\partial^2=0$ and $$\partial(am)=d(a)m+(-1)^{|a|}a\partial(m)$$ for all homogeneous elements $a\in A$ and $m\in M$. Here $\partial$ is also called the {\it differential} of $M$.

Let $M, N$ be two differential graded modules over $A$ and $f:M\to N$ be a graded $A$-module map of degree zero. Then $f$ is called a {\it differential graded module map} if $f$ commutes with the differentials.

We usually denote a differential graded module by $(M,\partial)$, or simply by $M$ if no confusions arise, and denote by $\mathbf{DG}(A)$ the category of differential graded modules over the differential graded algebra $A$, and whose morphism space consists of differential graded module maps.
\end{defn}

\begin{defn}\label{def2}
Let $(A,\cdot, \{,\}_A,d)\in\mathbf{DGPA}$ and $(M,\partial)\in\mathbf{DG}(A)$. We call $M$ a {\it left differential $\mathbb{Z}$-graded Poisson module} over $A$ provided that
\begin{itemize}
  \item[(i)]  $M$ is a {\it left $\mathbb{Z}$-graded Poisson module} over the $\mathbb{Z}$-graded Poisson algebra $A$. That is to say, there is a linear map $\{,\}_M: A\otimes M\to M$ of degree $0$ such that
  \begin{itemize}
  \item[(ia)] $\{a,bm\}_M=\{a,b\}_Am+(-1)^{|a||b|}b\{a,m\}_M$;
  \item[(ib)] $\{ab,m\}_M=a\{b,m\}_M+(-1)^{|a||b|}b\{a,m\}_M$, and
  \item[(ic)]$\{a,\{b,m\}_M\}_M=\{\{a,b\}_A,m\}_M+(-1)^{|a||b|}\{b,\{a,m\}_M\}_M$,
\end{itemize}
for all homogeneous elements $a,b\in A$ and $m\in M$.
  \item[(ii)] the linear map $\partial$ is compatible with the bracket $\{,\}_M$. That is, we have $$\partial(\{a,m\}_M)=\{d(a),m\}_M+(-1)^{|a|}\{a,\partial(m)\}_M,$$ for all homogeneous elements $a\in A$ and $m\in M$.
\end{itemize}
Similarly, a left differential graded module $M$ over a differential graded Poisson algebra $A$ is usually denoted by $(M,\{,\}_M,\partial)$, or simply by $M$ if there are no confusions.
\end{defn}

\begin{remark}
The notion of {\it right differential $\mathbb{Z}$-graded Poisson module} can be defined similarly. Moreover, we call $M$ a {\it differential $\mathbb{Z}$-graded Poisson bimodule} if and only if $M$ is both a left and a right differential $\mathbb{Z}$-graded Poisson module and$$(am)b=a(mb)\;\;{\rm and}\;\;\{a,\{m,b\}_M\}_M=\{\{a,m\}_M,b\}_M$$ for all homogeneous elements $a,b\in A,\;m\in M$.
\end{remark}

\begin{exa}\label{poissonmodule}
{\rm The following are some trivial examples of differential $\mathbb{Z}$-graded Poisson modules.
\begin{enumerate}
  \item Any left differential $\mathbb{Z}$-graded modules over differential $\mathbb{Z}$-graded commutative algebras can be considered as left differential $\mathbb{Z}$-graded Poisson modules with trivial Poisson structure.
\item Any left differential $\mathbb{Z}$-graded Lie modules over differential $\mathbb{Z}$-graded Lie algebras can be considered as left differential $\mathbb{Z}$-graded Poisson modules with trivial product.
  \item Any left $\mathbb{Z}$-graded Poisson modules over $\mathbb{Z}$-graded Poisson algebras can be considered as left differential $\mathbb{Z}$-graded Poisson modules with trivial differentials.

  \item Any differential $\mathbb{Z}$-graded Poisson algebras can be regarded as left (right, bi) differential $\mathbb{Z}$-graded Poisson modules over themselves.
  \end{enumerate}}
\end{exa}

\begin{exa}\label{2.17}
{\rm Let $A\in\mathbf{DGPA}$, and  $(M,\ast,\{,\}_M,\partial)$ be a left differential $\mathbb{Z}$-graded Poisson module over $A$. Then $(M^{op},\ast_{op},\{,\}^{op}_M,\partial_{op})$ can be regarded naturally as a right differential $\mathbb{Z}$-graded Poisson $A^{op}$-module, where
$$m\ast_{op}a:=(-1)^{|a||m|}a\ast m,\;\{m,a\}^{op}_M:=(-1)^{|a||m|}\{a,m\}_M\;{\rm and}\; \partial_{op}:=\partial.$$
}\end{exa}

\begin{exa}\label{tensormodule}
{\rm Let $A,B\in\mathbf{DGPA}$. For any left differential $\mathbb{Z}$-graded Poisson $A$-module $M$ and any differential $\mathbb{Z}$-graded Poisson $B$-module $N$. Then similar to the proof of Proposition \ref{tensor}, we can get that $M\otimes N$  is a left differential $\mathbb{Z}$-graded Poisson $A\otimes B$-module via\begin{align*}
(a\otimes b)\ast (m\otimes n):=(-1)^{|b||m|}am\otimes bn,\\
\partial(m\otimes n):=\partial_M(m)\otimes n+(-1)^{|m|}m\otimes \partial_N(n),\\
\{a\otimes b,m\otimes n\}:=(-1)^{|b||m|}(am\otimes\{b,n\}_N+\{a,m\}_M\otimes bn),
\end{align*}
where $a\in A$, $b\in B$, $m\in M$ and $n\in N$ are any homogeneous elements.
}\end{exa}
\begin{prop}
Let $M$ and $N$ be two left differential $\mathbb{Z}$-graded Poisson modules over differential $\mathbb{Z}$-graded Poisson algebras $A$ and $B$, respectively. Then $(M\otimes N)^{op}\cong M^{op}\otimes N^{op}$ as right differential $\mathbb{Z}$-graded Poisson $(A\otimes B)^{op}$-modules.
$\hfill\Box$\end{prop}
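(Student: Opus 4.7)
The plan is to mimic, at the level of modules, the strategy that established the algebra-level identity in Proposition \ref{tensoropp}. Concretely, I would define an isomorphism
\[
\varphi:(M\otimes N)^{op}\longrightarrow M^{op}\otimes N^{op}
\]
by $\varphi(m\otimes n):=m\otimes n$, i.e.\ the identity on the underlying graded vector space. Since by Proposition \ref{tensoropp} we have the \emph{equality} $(A\otimes B)^{op}=A^{op}\otimes B^{op}$, both sides of the desired isomorphism are right differential graded Poisson modules over the same graded algebra, so it is plausible that an identity-type map will work; the content of the proof is to check that all four structural operations (right action, Poisson bracket, differential, and the sign conventions) match.

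First I would handle the right action. By Example \ref{2.17} applied to the left action in Example \ref{tensormodule}, one has in $(M\otimes N)^{op}$
\[
(m\otimes n)\cdot_{op}(a\otimes b)=(-1)^{(|a|+|b|)(|m|+|n|)+|b||m|}(am)\otimes(bn).
\]
On the other side, $M^{op}\otimes N^{op}$ carries the right-module analog of Example \ref{tensormodule} over $A^{op}\otimes B^{op}$, namely $(m\otimes n)\cdot(a\otimes b)=(-1)^{|n||a|}(m\cdot_{op}a)\otimes (n\cdot_{op}b)$, which expands to $(-1)^{|n||a|+|a||m|+|b||n|}am\otimes bn$. A direct parity comparison shows the two exponents are congruent mod $2$, so $\varphi$ is a map of right modules.

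Next I would verify compatibility with the Poisson bracket and differential. For the bracket, compute $\{m\otimes n,a\otimes b\}^{op}$ by combining the formula for $\{a\otimes b,m\otimes n\}$ from Example \ref{tensormodule} with the opposite-sign rule of Example \ref{2.17}; then expand $\{m\otimes n,a\otimes b\}$ as defined on $M^{op}\otimes N^{op}$ using the right-module brackets $\{\,,\,\}_M^{op}$ and $\{\,,\,\}_N^{op}$, and match term by term. The two resulting expressions are each a sum of an ``$am\otimes\{b,n\}$-term'' and a ``$\{a,m\}\otimes bn$-term'', and again the sign bookkeeping closes after expanding $(|a|+|b|)(|m|+|n|)$ and cancelling doubled factors. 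The differential side is the cleanest step: since $\partial_{op}=\partial$ throughout, $\varphi$ commutes with the differentials tautologically.

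The main obstacle is the sign bookkeeping in the bracket verification: four signs interact simultaneously — the swap sign $(-1)^{|a||m|}$ from $M^{op}$, the swap sign $(-1)^{|b||n|}$ from $N^{op}$, the tensor-product sign $(-1)^{|n||a|}$ governing the right-module tensor product, and the opposite-module sign $(-1)^{(|a|+|b|)(|m|+|n|)}$. The calculation is entirely analogous to the verification already carried out in the proof of Proposition \ref{tensor}, and reduces to the identity $2xy\equiv 0\pmod{2}$ applied to pairs of repeated degree products; no new idea is required beyond this parity argument.
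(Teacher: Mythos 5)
Your proposal is correct: the identity map on the underlying graded space, together with the parity check $(|a|+|b|)(|m|+|n|)+|b||m|\equiv |a||m|+|a||n|+|b||n| \pmod 2$ for the action, bracket, and (trivially) the differential, is exactly the routine verification the paper has in mind — the paper omits this proof as a routine check, and your argument fills it in as intended.
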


\begin{defn}
Let $A\in\mathbf{DGPA}$, and $M$ be a left differential $\mathbb{Z}$-graded Poisson module over $A$. A left  $\mathbb{Z}$-graded submodule $N\leqslant M$ is called a {\it  left differential $\mathbb{Z}$-graded Poisson submodule}  provided that  $\partial (N)\subseteq N$ and $\{A,N\}_M\subseteq N$, which is usually denoted by $N\leqslant_{P}M$.
\end{defn}
\begin{prop}\label{2.16}
Let $M$ be a left differential $\mathbb{Z}$-graded Poisson module and $N\leqslant_{P}M$. Then the left quotient $\mathbb{Z}$-graded $A$-module $M/N$ also has a natural left differential $\mathbb{Z}$-graded Poisson module structure:
$$\partial_{M/N}(m+N):=\partial(m)+N\;{\rm and}\; \{a,m+N\}_{M/N}:=\{a,m\}+N$$ for all homogeneous elements $a\in A$ and $m\in M$.
$\hfill\Box$\end{prop}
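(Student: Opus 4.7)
The statement is a standard ``quotient inherits structure'' result, so the proof will be a routine verification that proceeds in two stages: first well-definedness of the two operations on $M/N$, and then checking that the inherited operations still satisfy the axioms of Definition~\ref{def2}.

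For well-definedness, suppose $m+N=m'+N$, so $m-m'\in N$. Since $N\leqslant_P M$ requires $\partial(N)\subseteq N$ and $\{A,N\}_M\subseteq N$, we obtain $\partial(m)-\partial(m')=\partial(m-m')\in N$, so $\partial_{M/N}(m+N)$ is independent of the chosen representative; similarly, for any homogeneous $a\in A$, $\{a,m\}_M-\{a,m'\}_M=\{a,m-m'\}_M\in N$, so $\{a,m+N\}_{M/N}$ is also well defined. Combined with the already established fact that $M/N$ is a left graded $A$-module with the usual action $a(m+N):=am+N$ (which uses only $AN\subseteq N$, part of the submodule condition), this puts a well-defined triple $(M/N,\{,\}_{M/N},\partial_{M/N})$ on $M/N$.

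For the second stage I would verify the axioms (i)(a)--(c) and (ii) of Definition~\ref{def2} directly by lifting to representatives and using that the corresponding identity already holds in $M$. For example, for (ia), given homogeneous $a,b\in A$ and $m\in M$,
\[
\{a,b(m+N)\}_{M/N}=\{a,bm\}_M+N=\{a,b\}_A m+(-1)^{|a||b|}b\{a,m\}_M+N,
\]
which by definition equals $\{a,b\}_A(m+N)+(-1)^{|a||b|}b\{a,m+N\}_{M/N}$; identities (ib) and (ic) are obtained the same way. For (ii),
\[
\partial_{M/N}\{a,m+N\}_{M/N}=\partial\{a,m\}_M+N=\{d(a),m\}_M+(-1)^{|a|}\{a,\partial(m)\}_M+N,
\]
which is exactly $\{d(a),m+N\}_{M/N}+(-1)^{|a|}\{a,\partial_{M/N}(m+N)\}_{M/N}$. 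Finally $\partial_{M/N}^2=0$ because $\partial^2=0$ on $M$, and the Leibniz rule for the $A$-action on $M/N$ is inherited identically. There is no real obstacle: the only nontrivial step is well-definedness of the bracket, and this is precisely what the Poisson submodule condition $\{A,N\}_M\subseteq N$ was designed to guarantee.
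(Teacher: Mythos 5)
Your proof is correct and is exactly the routine verification the paper has in mind: the paper omits the argument (ending the proposition with $\Box$ as one of the "routine checks"), and your two-stage argument — well-definedness from $\partial(N)\subseteq N$, $\{A,N\}_M\subseteq N$ and $AN\subseteq N$, followed by lifting the axioms of Definition~\ref{def2} to representatives — is precisely that check. No gaps.
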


\begin{defn}
Let $A\in\mathbf{DGPA}$ and $M,N$ be two left differential $\mathbb{Z}$-graded Poisson $A$-modules. A left $\mathbb{Z}$-graded $A$-module map $f: M\to N$ of degree $0$ is called a {\it left differential $\mathbb{Z}$-graded Poisson module map} provided that
$$f(\partial_M(m))=\partial_N(f(m))\;{\rm and}\;f(\{a,m\}_M)=\{a,f(m)\}_N$$for all homogeneous elements $a\in A$ and $m\in M$. In particular, we say $M$ and $N$ are {\it isomorphic} as left differential $\mathbb{Z}$-graded Poisson modules if there is a bijective left differential $\mathbb{Z}$-graded Poisson module map $f:M\to N$, which is usually denoted by $M\cong N$.

Let $\mathbf{ DGP}(A)$ denote the category of left differential $\mathbb{Z}$-graded Poisson $A$-modules whose morphism space consists of left differential graded Poisson module maps.
\end{defn}

\begin{prop}\label{2.20}
Let $A\in\mathbf{DGPA}$, $M,N\in\mathbf{ DGP}(A)$ and $f:M\to N$ be a left differential $\mathbb{Z}$-graded Poisson module map. Then
\begin{enumerate}
\item $\ker f\leqslant_{P}M$;
\item $f$ preserves differential $\mathbb{Z}$-graded Poisson submodules. More precisely, if $X\leqslant_{P}M$, then $f(X)\leqslant_{P}N$. In particular, $\im f\leqslant_{P}N$;
\item $M/\ker f\cong \im f$ as differential $\mathbb{Z}$-graded Poisson modules.$\hfill\Box$\end{enumerate}
\end{prop}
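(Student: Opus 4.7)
The three statements together form the first isomorphism theorem for $\mathbf{DGP}(A)$, so the strategy is to verify the Poisson/differential compatibilities on top of the already-known graded module statements. Throughout I will use that $f$ commutes with both the differentials and the brackets by assumption, i.e.\ $f\circ \partial_M=\partial_N\circ f$ and $f(\{a,m\}_M)=\{a,f(m)\}_N$ for all homogeneous $a\in A$ and $m\in M$.

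For (1), I already know from ordinary module theory that $\ker f$ is a graded $A$-submodule of $M$, so by Definition of left differential graded Poisson submodule it suffices to check the two extra closure conditions. If $m\in \ker f$ is homogeneous, then $f(\partial_M(m))=\partial_N(f(m))=\partial_N(0)=0$, giving $\partial_M(\ker f)\subseteq \ker f$. Similarly, for any homogeneous $a\in A$ and $m\in \ker f$, $f(\{a,m\}_M)=\{a,f(m)\}_N=\{a,0\}_N=0$, hence $\{A,\ker f\}_M\subseteq \ker f$, so $\ker f\leqslant_P M$.

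For (2), let $X\leqslant_P M$. That $f(X)$ is a graded $A$-submodule of $N$ is standard. For the differential, any element of $f(X)$ has the form $f(x)$ with $x\in X$ homogeneous, and $\partial_N(f(x))=f(\partial_M(x))\in f(X)$ since $\partial_M(X)\subseteq X$. For the Poisson bracket, $\{a,f(x)\}_N=f(\{a,x\}_M)\in f(X)$ because $\{A,X\}_M\subseteq X$. Hence $f(X)\leqslant_P N$; specializing to $X=M$ yields $\im f\leqslant_P N$.

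For (3), by Proposition \ref{2.16} the quotient $M/\ker f$ carries a canonical left differential graded Poisson module structure, and by (2) the same holds on $\im f$ as a Poisson submodule of $N$. The usual assignment $\bar f: M/\ker f\to \im f$, $\bar f(m+\ker f):=f(m)$, is a well-defined graded $A$-module isomorphism; the only thing left to check is compatibility with $\partial$ and $\{,\}$, and both are immediate from the formulas in Proposition \ref{2.16} and the hypotheses on $f$, since $\bar f(\partial_{M/\ker f}(m+\ker f))=f(\partial_M(m))=\partial_N(f(m))=\partial_{\im f}(\bar f(m+\ker f))$ and analogously for the bracket. Thus $\bar f$ is an isomorphism in $\mathbf{DGP}(A)$. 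The entire argument is essentially bookkeeping; there is no genuine obstacle, merely the need to verify that each routine compatibility survives intact.
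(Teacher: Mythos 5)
Your proof is correct and is exactly the routine verification the paper has in mind: the paper omits the proof of Proposition \ref{2.20} entirely (marking it as a routine check), and your argument carries out precisely the expected bookkeeping, using that $f$ commutes with $\partial$ and $\{,\}$ to check the submodule closure conditions and to transfer the quotient structure from Proposition \ref{2.16} onto $\im f$.
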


\begin{prop}\label{2.22}
Let $A\in\mathbf{DGPA}$ and $M\in\mathbf{DGP}(A)$. Then the cohomology $\mathrm{H}(M)$ of $M$ is a left $\mathbb{Z}$-graded Poisson module over $\mathcal{H}(A)$. Moreover, if $N\in\mathbf{DGP}(A)$ such that $M\cong N$ as differential $\mathbb{Z}$-graded Poisson modules, then $\mathrm{H}(M)\cong \mathrm{H}(N)$ as $\mathbb{Z}$-graded Poisson $\mathcal{H}(A)$-modules.$\hfill\Box$\end{prop}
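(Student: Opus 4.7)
The proposition is the module analogue of Proposition \ref{2.12}, and I will follow the same scheme: define the $\mathcal{H}(A)$-action and Poisson bracket on $\mathrm{H}(M)$ by representatives, check well-definedness using the two graded Leibniz rules, and observe that every Poisson-module axiom passes to cohomology since it already holds at the chain level.

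Concretely, for cocycles $a\in \ker d$ and $m\in \ker\partial$ I set
\[
[a]\cdot [m] \;:=\; [a\cdot m], \qquad \{[a],[m]\}_{\mathrm{H}(M)} \;:=\; [\{a,m\}_M].
\]
That $a\cdot m$ and $\{a,m\}_M$ are themselves cocycles is immediate from the graded Leibniz rule for the product (Definition \ref{dgpa}(v)) applied on $A$-modules and from clause (ii) of Definition \ref{def2}:
$\partial(am) = d(a)m + (-1)^{|a|}a\,\partial(m) = 0$ and
$\partial(\{a,m\}_M) = \{d(a),m\}_M + (-1)^{|a|}\{a,\partial(m)\}_M = 0$.
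For independence of representatives, the same Leibniz rules show that replacing $a$ by $a+d(a')$ or $m$ by $m+\partial(m')$ alters $am$ (resp.\ $\{a,m\}_M$) by an element of $\mathrm{im}\,\partial$: for instance, $d(a')\cdot m = \partial(a'm) - (-1)^{|a'|}a'\,\partial(m) = \partial(a'm)$ when $m$ is a cocycle, and similarly $\{a,\partial(m')\}_M = (-1)^{|a|}\partial(\{a,m'\}_M)$ when $a$ is a cocycle (using Definition \ref{def2}(ii) together with $d(a)=0$).

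Once well-definedness is established, the graded Poisson-module axioms (ia)–(ic) of Definition \ref{def2} are identities in $M$ among products and brackets of cocycles, so they descend term-by-term to $\mathrm{H}(M)=\ker\partial/\mathrm{im}\,\partial$, giving $\mathrm{H}(M)$ the structure of a $\mathbb{Z}$-graded Poisson module over the $\mathbb{Z}$-graded Poisson algebra $\mathcal{H}(A)$ furnished by Proposition \ref{2.12}. For the second statement, any differential $\mathbb{Z}$-graded Poisson module map $f:M\to N$ commutes with the differentials and so induces $\mathrm{H}(f):\mathrm{H}(M)\to\mathrm{H}(N)$; because $f(am)=af(m)$ and $f(\{a,m\}_M)=\{a,f(m)\}_N$ at chain level, $\mathrm{H}(f)$ is $\mathcal{H}(A)$-linear and bracket-preserving. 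An isomorphism $f$ has a differential graded Poisson module inverse $f^{-1}$, so $\mathrm{H}(f^{-1})$ is a two-sided inverse to $\mathrm{H}(f)$, yielding $\mathrm{H}(M)\cong\mathrm{H}(N)$ in the category of $\mathbb{Z}$-graded Poisson $\mathcal{H}(A)$-modules.

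The only potential obstacle is the bookkeeping for well-definedness of $\{[a],[m]\}_{\mathrm{H}(M)}$, but Definition \ref{def2}(ii) is designed precisely to make coboundaries in either slot produce coboundaries in the bracket, so no genuine difficulty arises; the proof is a routine transcription of the argument for Proposition \ref{2.12}.
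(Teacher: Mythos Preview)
Your proposal is correct and is exactly the routine verification the paper has in mind; the paper itself omits the proof entirely (note the $\Box$ immediately after the statement and the remark at the start of Section~2 that such proofs are left as routine checks). Your argument via representatives, the two Leibniz rules for well-definedness, and the chain-level identities descending to cohomology is the standard one and matches what the paper tacitly assumes.
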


\medskip
\section{Universal enveloping algebras of differential graded Poisson algebras}
In this section, we study the universal enveloping algebras of differential $\mathbb{Z}$-graded Poisson algebras. \subsection{Constructions of $A^e$ and $A^E$}
Let $A\in\mathbf{DGPA}$. In this subsection, we construct two isomorphic differential graded algebras from $A$.
\subsubsection{Construction of $A^e$}
Let $(A,d,\{,\})\in\mathbf{DGPA}$, $m_A:=\{m_a|\; a\in A\}$ and $h_A:=\{h_a|\; a\in A\}$ be two copies of the $\mathbb{Z}$-graded vector space $A$ endowed with the following two $\Bbbk$-linear isomorphisms:
$$m: A\to m_A\;\;\;{\rm and}\;\;\;h:A\to h_A,$$sending any element $a\in A$ to $m_a$ and $h_a$, respectively. For any homogeneous element $a\in A$, we suppose that $m_a$ and $h_a$ are both homogeneous elements with $|h_a|=|m_a|=|a|$. We define a $\mathbb{Z}$-graded algebra $A^e$ generated by $m_A$ and $h_A$, subject to the following relations:
\begin{align*}
m_{ab}=m_am_b,\\
h_{ab}=m_ah_b+(-1)^{|a||b|}m_bh_a,\\
m_{\{a,b\}}=h_am_b-(-1)^{|a||b|}m_bh_a,\\
h_{\{a,b\}}=h_ah_b-(-1)^{|a||b|}h_bh_a,\\
m_1=1,
\end{align*}
for any homogeneous elements $a,b\in A$.

\begin{lemma}\label{lem1}
$A^e$ is a differential $\mathbb{Z}$-graded algebra.
\end{lemma}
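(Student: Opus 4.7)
The natural approach is to construct the differential $\partial$ on $A^e$ by prescribing it on the generating set and then showing it descends. I would first work in the free $\mathbb{Z}$-graded $\Bbbk$-algebra $F$ on the symbols $\{m_a, h_a : a \in A\}$ (where $|m_a|=|h_a|=|a|$) and define $\partial$ on generators by
\[
\partial(m_a) := m_{d(a)}, \qquad \partial(h_a) := h_{d(a)},
\]
extended to all of $F$ as a $\Bbbk$-linear graded derivation of degree $1$ via the Leibniz rule. Linearity of $d$ makes this well-defined on the generators (so the assignment really defines a map on the underlying $\mathbb{Z}$-graded vector spaces $m_A$ and $h_A$), and the graded Leibniz rule determines $\partial$ uniquely on products.

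Next I would verify $\partial^2=0$ on $F$. Since the square of a degree-$1$ derivation is again a (degree-$2$) derivation, it suffices to check this on generators: $\partial^2(m_a)=m_{d^2(a)}=0$ and $\partial^2(h_a)=h_{d^2(a)}=0$, using $d^2=0$ in $A$.

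The main content of the proof is showing that $\partial$ preserves the two-sided ideal $I \subset F$ generated by the five defining relations of $A^e$, so that $\partial$ descends to $A^e = F/I$. For each relation $r=0$ I would compute $\partial(r)$ using the Leibniz rule and then rewrite it modulo $I$. Concretely:
\begin{itemize}
\item For $m_{ab}-m_am_b$, apply the graded Leibniz rule for the product (axiom (v)) to expand $m_{d(ab)}=m_{d(a)b}+(-1)^{|a|}m_{ad(b)}$ and compare with $\partial(m_am_b)=m_{d(a)}m_b+(-1)^{|a|}m_am_{d(b)}$.
\item For $h_{ab}-m_ah_b-(-1)^{|a||b|}m_bh_a$, expand $h_{d(ab)}$ by axiom (v) and then by the second defining relation; on the other side apply $\partial$ by Leibniz and match terms, tracking the shifts $|d(a)|=|a|+1$, $|d(b)|=|b|+1$ in the signs.
\item For $m_{\{a,b\}}-h_am_b+(-1)^{|a||b|}m_bh_a$, use the graded Leibniz rule for the bracket (axiom (iv)) to get $m_{d\{a,b\}}=m_{\{d(a),b\}}+(-1)^{|a|}m_{\{a,d(b)\}}$, then expand each term by the third relation and match.
\item For $h_{\{a,b\}}-h_ah_b+(-1)^{|a||b|}h_bh_a$, proceed analogously, using axiom (iv) and the fourth relation.
\item For $m_1-1$, note $d(1)=0$ (automatic in any DGA since $d(1)=d(1\cdot 1)=2d(1)$), so $\partial(m_1-1)=m_0-0=0$.
\end{itemize}
Once each of these computations is carried out, $\partial(I)\subseteq I$, so $\partial$ induces a degree-$1$ $\Bbbk$-linear map on $A^e$ satisfying the graded Leibniz rule and squaring to zero, which is exactly the assertion that $(A^e,\partial)$ is a differential $\mathbb{Z}$-graded algebra.

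The only real obstacle here is bookkeeping: the sign contributions from shifting by $d$ (which changes $|a|$ to $|a|+1$) and from the Koszul sign in the Leibniz rule must cancel correctly in each of the five checks. There is no conceptual difficulty — the identities reduce immediately to axioms (iv) and (v) of Definition \ref{dgpa} — but the computation for the $h_{\{a,b\}}$ relation is the longest, since both sides involve applying $\partial$ and then using the fourth defining relation, producing eight terms on each side to be paired up.
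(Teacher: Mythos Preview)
Your proposal is correct and follows essentially the same approach as the paper: define $\partial$ on generators by $\partial(m_a)=m_{d(a)}$, $\partial(h_a)=h_{d(a)}$, extend by the graded Leibniz rule, and verify that each defining relation is preserved by $\partial$ using axioms (iv) and (v) of Definition~\ref{dgpa}. Your treatment is slightly more careful in places (working explicitly in the free algebra before passing to the quotient, explaining why $\partial^2=0$ reduces to generators, and handling the $m_1-1$ relation), but the argument is the same.
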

\begin{proof}
By the construction of $A^e$, it is easy to see that $A^e$ is a $\mathbb{Z}$-graded algebra. Let $\partial: A^e\to A^e$ be a $\Bbbk$-linear map of degree 1 determined by
$$\partial(m_a):=m_{d(a)},\;\;\;\partial(h_a):=h_{d(a)},$$
for any homogeneous element $a\in A$ and the graded Leibniz rule, i.e., for any homogeneous elements $x,y\in A^e$, we have $$\partial(xy)=\partial(x)y+(-1)^{|x||y|}x\partial(y).$$
Note that $\partial^2=0$ is immediate from the definition. Thus it suffices to prove the following equations:
\begin{align*}
\partial(m_{ab}-m_am_b)=0,\\
\partial(h_{ab}-m_ah_b-(-1)^{|a||b|}m_bh_a)=0,\\
\partial(m_{\{a,b\}}-h_am_b+(-1)^{|a||b|}m_bh_a)=0,\\
\partial(h_{\{a,b\}}-h_ah_b+(-1)^{|a||b|}h_bh_a)=0,
\end{align*}
which follow from
\begin{eqnarray*}
\partial(m_{ab})&=&m_{d(ab)}\\
&=&m_{d(a)b+(-1)^{|a|}ad(b)}\\
&=&m_{d(a)}m_b+(-1)^{|a|}m_am_{d(b)}\\
&=&\partial(m_a)m_b+(-1)^{|a|}m_a\partial(m_b)\\
&=&\partial(m_am_b),
\end{eqnarray*}
\begin{eqnarray*}
\partial(h_{ab})&=&h_{d(ab)}\\
&=&h_{d(a)b+(-1)^{|a|}ad(b)}\\
&=&h_{d(a)b}+(-1)^{|a|}h_{ad(b)}\\
&=&m_{d(a)}h_b+(-1)^{|d(a)||b|}m_bh_{d(a)}+(-1)^{|a|}(m_ah_{d(b)}+(-1)^{|a||d(b)|}m_{d(b)}h_a)\\
&=&(m_{d(a)}h_b+(-1)^{|a|}m_ah_{d(b)})+(-1)^{|a||b|}(m_{d(b)}h_a+(-1)^{|b|}m_bh_{d(a)})\\
&=&\partial(m_ah_b)+(-1)^{|a||b|}\partial(m_bh_a)\\
&=&\partial(m_ah_b+(-1)^{|a||b|}m_bh_a),
\end{eqnarray*}
\begin{eqnarray*}
\partial(m_{\{a,b\}})&=&m_{d(\{a,b\})}\\
&=&m_{\{d(a),b\}+(-1)^{|a|}\{a,d(b)\}}\\
&=&m_{\{d(a),b\}}+(-1)^{|a|}m_{\{a,d(b)\}}\\
&=&h_{d(a)}m_b-(1)^{|d(a)||b|}m_bh_{d(a)}+(-1)^{|a|}(h_am_{d(b)}-(-1)^{|a||d(b)|}m_{d(b)}h_a)\\
&=&(h_{d(a)}m_b+(-1)^{|a|}h_am_{d(b)})-(-1)^{|a||b|}(m_{d(b)}h_a-(-1)^{|b|}m_bh_{d(a)})\\
&=&\partial(h_am_b)-(-1)^{|a||b|}\partial(m_bh_a)\\
&=&\partial(h_am_b-(-1)^{|a||b|}m_bh_a)
\end{eqnarray*}
and
\begin{eqnarray*}
\partial(h_{\{a,b\}})&=&h_{d(\{a,b\})}\\
&=&h_{\{d(a),b\}+(-1)^{|a|}\{a,d(b)\}}\\
&=&h_{d(a)}h_b-(-1)^{|d(a)||b|}h_bh_{d(a)}+(-1)^{|a|}(h_ah_{d(b)}-(-1)^{|a||d(b)|}h_{d(b)}h_a)\\
&=& (h_{d(a)}h_b+(-1)^{|a|}h_ah_{d(b)})-(-1)^{|a||b|}(h_{d(b)}h_a+(-1)^{|b|}h_bh_{d(a)})\\
&=& \partial(h_ah_b)-(-1)^{|a||b|}\partial(h_bh_a)\\
&=& \partial(h_ah_b-(-1)^{|a||b|}h_bh_a).
\end{eqnarray*}
\end{proof}

\subsubsection{Construction of $A^E$}
We first recall the smash product in the ``differential graded'' setting. Let $(H,m,u,\\\Delta,\epsilon,S,\partial_H)$ be a differential graded Hopf algebra with the standard notations. By Sweedler notation, we write
\[
\Delta(a)=\sum\limits_{(a)} a_1\otimes a_2,
\]
for all homogenous element $a\in A$. Note that $\Delta$ admits the Koszul sign, i.e.,
\[\Delta(ab)=\sum_{(a),(b)} (-1)^{|a_2||b_1|}a_1b_1\otimes a_2b_2,\]
for all homogenous elements $a,b\in H$.

A differential graded algebra $B$ is said to be an $H$-module differential graded algebra if
\begin{itemize}
\item $B$ is a left differential graded $H$-module via $h\otimes a\mapsto h\cdot a$;
\item $h\cdot 1_B=\epsilon(h)1_B$; and
\item $h\cdot(ab)=\sum\limits_{(h)} (-1)^{|a||h_2|}(h_1\cdot a)(h_2\cdot b)$, for all homogenous elements $h\in H$ and $a,b\in B$.
\end{itemize}

Suppose that $(B,\partial_B)$ is a left $H$-module differential graded algebra, then the smash product $B\#H$ admits a natural differential graded algebra structure:
\begin{itemize}
\item $B\#H\cong B\otimes H$ as $\Bbbk$-vector spaces;
\item the product is given by
\[
(a\#g)(b\#h)=\sum\limits_{(g)} (-1)^{|g_2||b|}a(g_1\cdot b)\# (g_2h);\
\]and
\item the differential $d$ is given by $d(a\#h)=\partial_B(a)\#h+(-1)^{|a|}a\#\partial_H(h)$, for all homogenous elements $a,b\in B$ and $g,h\in H$.
\end{itemize}

Note that in the smash product $B\#H$, both $B$ and $H$ are differential graded subalgebras via the embeddings $B\#1$ and $1\#H$. Moreover, $B\#H$ is generated by $B$ and $H$ as differential graded algebras.

Now we return to the construction of $A^E$, where $A\in\mathbf{DGPA}$. We denote by $\mathbf{U}(A)$ the universal enveloping algebra of the differential graded Lie algebra $(A,\{,\})$. In order to avoid the confusions, we use $\mathbb{A}$ to denote the image of $A$ under the natural projection $T(A)\to \mathbf{U}(A)$. It is well-known that $\mathbf U(A)$ is a differential graded Hopf algebra, whose comultiplication is given by $\Delta(h)=h\otimes 1+1\otimes h$ for all $h\in \mathbb{A}$. We first construct a left $\mathbf U(A)$-module structure on $A$ via $1_{\mathbf U(A)}\cdot a:=a$ and $h\cdot a:=\{h,a\}$ for all $h\in \mathbb{A}$ and $a\in A$.

\begin{lemma}
Let $A\in\mathbf{DGPA}$. Then $A$ is a left $\mathbf U(A)$-module differential graded algebra.
\end{lemma}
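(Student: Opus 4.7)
The plan is to first extend the definition $h\cdot a:=\{h,a\}$ (for $h\in\mathbb{A}$) to a genuine left $\mathbf{U}(A)$-module structure on $A$, then verify the module-algebra axioms and differential compatibility by reducing every check to the Poisson axioms on the generators $\mathbb{A}\subset\mathbf{U}(A)$ and then bootstrapping to all of $\mathbf{U}(A)$.

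First I would package the assignment $a\mapsto \{a,-\}$ as a graded Lie algebra homomorphism $\rho:(A,\{,\})\to \mathrm{End}^{*}_{\Bbbk}(A)$; the Lie bracket axiom $\rho(\{a,b\})=\rho(a)\rho(b)-(-1)^{|a||b|}\rho(b)\rho(a)$ is precisely the graded Jacobi identity (Definition \ref{gla}(ii)). By the universal property of $\mathbf{U}(A)$, $\rho$ extends uniquely to a graded algebra map $\widetilde{\rho}:\mathbf{U}(A)\to \mathrm{End}^{*}_{\Bbbk}(A)$, which gives the desired $\mathbf{U}(A)$-module structure and explains why the defining relations of $\mathbf{U}(A)$ are respected.

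Next I would verify the three defining conditions of an $H$-module differential graded algebra. For $h\in\mathbb{A}$ we have $\epsilon(h)=0$, while $h\cdot 1_{A}=\{h,1\}=0$, the latter following from applying biderivation to $\{h,1\cdot 1\}$ and solving $\{h,1\}=2\{h,1\}$; thus $h\cdot 1_{A}=\epsilon(h)1_{A}$ on primitives, and this extends multiplicatively since both sides are algebra maps in $h$. For the Leibniz-type identity $h\cdot(ab)=\sum_{(h)}(-1)^{|a||h_{2}|}(h_{1}\cdot a)(h_{2}\cdot b)$, I would first check the primitive case $\Delta(h)=h\otimes 1+1\otimes h$, where the right-hand side becomes $\{h,a\}b+(-1)^{|a||h|}a\{h,b\}$, which is exactly the biderivation property (Definition \ref{dgpa}(iii)). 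For general $h\in\mathbf{U}(A)$ I would induct on the word length in $\mathbb{A}$: if the identity holds for $g$ and $k$, then using $\Delta(gk)=\sum (-1)^{|g_{2}||k_{1}|}g_{1}k_{1}\otimes g_{2}k_{2}$ together with the inductive hypothesis and the associativity of the action produces the identity for $gk$, at the cost of a careful Koszul-sign bookkeeping.

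Finally, for the differential compatibility I would check $\partial_{A}(h\cdot a)=\partial_{\mathbf{U}(A)}(h)\cdot a+(-1)^{|h|}h\cdot\partial_{A}(a)$: on primitives this is exactly the graded Leibniz rule for the bracket (Definition \ref{dgpa}(iv)), and it propagates to all of $\mathbf{U}(A)$ because both sides are graded derivations in $h$ agreeing on the generating set $\mathbb{A}$. The main technical obstacle is the inductive step for the module-algebra identity, where the bookkeeping of Koszul signs arising from $\Delta(gk)$ must match the signs generated by moving the $h_{2}$-action past $a$; everything else is a direct translation of the Poisson axioms from Definition \ref{dgpa} into the Hopf-theoretic language of Section 3.1.2.
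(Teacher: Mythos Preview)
Your proposal is correct and follows essentially the same route as the paper: verify each axiom first on the primitive generators $\mathbb{A}\subset\mathbf{U}(A)$ using the Poisson identities, then extend multiplicatively by induction on word length. The only cosmetic difference is that you invoke the universal property of $\mathbf{U}(A)$ to obtain the module structure in one stroke, whereas the paper checks the defining relation $(g\otimes h-(-1)^{|g||h|}h\otimes g)\cdot a=\{g,h\}\cdot a$ by hand; these are equivalent. One small imprecision: for $h\cdot 1_A=\epsilon(h)1_A$ you say ``both sides are algebra maps in $h$'', but $h\mapsto h\cdot 1_A$ is not an algebra map into $A$ a priori---what you need (and what the paper does) is the observation that the set of $h$ satisfying the identity is closed under multiplication, which is exactly the inductive step.
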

\begin{proof}
Firstly, we show that the left action of $\mathbf U(A)$ on $A$ is well defined.
According to the definition, we have
\begin{align*}
(g\otimes h-(-1)^{|g||h|}h\otimes g)\cdot a=\{g,\{h,a\}\}-(-1)^{|g||h|}\{h,\{g,a\}\}=\{\{g,h\},a\}=\{g,h\}\cdot a,
\end{align*}
for all homogenous elements $g,h\in \mathbb{A}$ and $a\in A$ by the graded Jacobi identity in $A$.
Recall that
$$\mathbf U(A):=\frac{T(\mathbb{A})}{(g\otimes h-(-1)^{|g||h|}h\otimes g-\{g,h\})}$$ for all homogenous elements $g,h\in \mathbb{A}$. Hence, the action is well-defined.

Secondly, we show that the action respects the multiplication in $A$. Let $h$ be any homogenous element in $\mathbb{A}$. Since $\Delta(h)=h\otimes 1+1\otimes h$ , we have $\epsilon(h)=0$. Then $h\cdot 1_A=\{h,1_A\}=0=\epsilon(h)1_A$. Suppose we have $a\cdot 1_A=\epsilon(a)1_A,b\cdot 1_A=\epsilon(b)1_A$. Because $\epsilon$ is a differential graded algebra map, we have
$$(ab)\cdot 1_A=a\cdot (b\cdot 1_A)=a\cdot \epsilon(b)1_A=\epsilon(a)\epsilon(b)1_A=\epsilon(ab)1_A.$$ Hence $a\cdot 1_A=\epsilon(a)1_A$ for all $a\in \mathbf U(A)$ by induction since it is verified on the generating space $\mathbb{A}$.

On the other side, we see that
\begin{eqnarray*}
h\cdot(ab)&=&\{h,ab\}\\
&=&\{h,a\}b+(-1)^{|a||h|}a\{h,b\}\\
&=&(h\cdot a)(1\cdot b)+(-1)^{|a||h|}(1\cdot a)(h\cdot b)\\
&=&\sum_{(h)} (-1)^{|a||h_2|}(h_1\cdot a)(h_2\cdot b),
\end{eqnarray*}
for any homogenous elements $a,b\in A$ since $\Delta(h)=h\otimes 1+1\otimes h$. Now suppose that $$f\cdot(ab)=\sum_{(f)} (-1)^{|a||f_2|}(f_1\cdot a)(f_2\cdot b)$$ and $$g\cdot(ab)=\sum_{(g)} (-1)^{|a||g_2|}(g_1\cdot a)(g_2\cdot b)$$ for homogenous elements $f,g\in \mathbf U(A)$ and $a,b\in A$. Then we have
\begin{eqnarray*}
(fg)\cdot(ab)&=&f\cdot \left(\sum_{(g)}(-1)^{|g_2||a|}(g_1\cdot a)(g_2\cdot b)\right)\\
&=&\sum_{(f),(g)}(-1)^{|g_2||a|+|f_2|(|g_1|+|a|)}\left(f_1\cdot(g_1\cdot a)\right)\left(f_2\cdot (g_2\cdot b)\right)\\
&=&\sum_{(f),(g)}(-1)^{|a|(|f_2|+|g_2|)+|f_2||g_1|}\left((f_1g_1)\cdot a)\right)\left((f_2g_2)\cdot b)\right)\\
&=&\sum_{(fg)}(-1)^{|a||(fg)_2|}\left((fg)_1\cdot a)\right)\left((fg)_2\cdot b)\right).
\end{eqnarray*}
Hence, it follows by induction that $\varphi\cdot(ab)=\sum\limits_{(\varphi)} (-1)^{|a||\varphi_2|}(\varphi_1\cdot a)(\varphi_2\cdot b)$ for all homogenous elements $\varphi\in \mathbf U(A)$ and $a,b\in A$.

Finally, we want to show that the differential respects the action as a differential graded $\mathbf U(A)$-module. For any homogenous element $h\in \mathbb{A}$, we have $$\partial_A(h\cdot a)=\partial_A\{h,a\}=\{\partial_A(h),a\}+(-1)^{|h|}\{h,\partial_A(A)\}.$$ Note that $\partial_{\mathbf U(A)}(h)=\partial_A(h)$, we have $\partial_A(h\cdot a)=\partial_{\mathbf U(A)}(h)\cdot a+(-1)^{|h|} h\cdot \partial_A(a)$ for any $a\in A$. Now suppose that $$\partial_A(g\cdot a)=\partial_{\mathbf U(A)}(g)\cdot a+(-1)^{|g|} g\cdot \partial_A(a)$$ and $$\partial_A(h\cdot a)=\partial_{\mathbf U(A)}(h)\cdot a+(-1)^{|h|} h\cdot \partial_A(a)$$ for any homogenous elements $g,h\in \mathbf U(A)$ and $a\in A$. Then we have
\begin{eqnarray*}
\partial_A((hg)\cdot a)&=&\partial_A(h\cdot(g\cdot a))\\
&=&\partial_{\mathbf U(A)}(h)\cdot (g\cdot a)+(-1)^{|h|}h\cdot \partial_A(g\cdot a)\\
&=&(\partial_{\mathbf U(A)}(h)g)\cdot a+(-1)^{|h|}(h\partial_{\mathbf U(A)}(g))\cdot a+(-1)^{|h|+|g|}(hg)\cdot \partial_A(a)\\
&=&\partial_{\mathbf U(A)} (hg)\cdot a+(-1)^{|hg|}(hg)\cdot \partial_A(a).
\end{eqnarray*}
Hence by induction it follows that $\partial_A(h\cdot a)=\partial_{\mathbf U(A)}(h)\cdot a+(-1)^{|h|} h\cdot \partial_A(a)$ for all homogenous elements $h\in \mathbf U(A)$ and $a\in A$ since it is true on the generating space $\mathbb{A}$.
\end{proof}

Then, we define
\[A^E:=\frac{A\#\mathbf U(A)}{(1\#(ab)-a\#b-(-1)^{|a||b|}b\#a)}\]
for any homogenous elements $a,b\in A\;{\rm or}\;\mathbb{A}$.

\begin{remark}
Note that, for any homogeneous element $a$, when we use the phase `` $a\in A\;{\rm or}\;\mathbb{A}$'', it means that $a\in A$ if $a$ lies on the left side of ``$\#$'', and $a\in\mathbb{A}$ otherwise.
\end{remark}

\begin{prop}\label{dg}
$A^E$ is a $\mathbb Z$-graded differential graded algebra.
\end{prop}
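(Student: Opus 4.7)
The plan is to transport the differential graded structure from the smash product $A\#\mathbf{U}(A)$ to its quotient $A^E$. By the preceding lemma, $A$ is a left $\mathbf{U}(A)$-module differential graded algebra, so by the general recipe recalled just above, the smash product $A\#\mathbf{U}(A)$ carries a canonical differential graded algebra structure with differential
\[
d(a\#h)=\partial_A(a)\#h+(-1)^{|a|}a\#\partial_{\mathbf U(A)}(h).
\]
Since the defining relations of $A^E$ are homogeneous of degree $|a|+|b|$, the quotient automatically inherits a $\mathbb{Z}$-grading and a graded algebra structure; moreover $d^2=0$ and the graded Leibniz rule descend from $A\#\mathbf U(A)$ the moment we know that $d$ preserves the defining ideal. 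So the entire content of the proposition lies in this preservation statement.

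The main task, therefore, is to show that the two-sided ideal $I$ generated by
\[
r(a,b):=1\#(ab)-a\#b-(-1)^{|a||b|}b\#a
\]
(for homogeneous $a,b$) is stable under $d$. Since $d$ obeys the graded Leibniz rule, it suffices to check that $d(r(a,b))\in I$ for every admissible pair $(a,b)$. To perform this check I would use the crucial fact that $\partial_{\mathbf U(A)}$ restricts to $\partial_A$ on the generating subspace $\mathbb{A}\cong A$, which together with the Leibniz rule in $\mathbf U(A)$ gives $\partial_{\mathbf U(A)}(ab)=\partial_A(a)\,b+(-1)^{|a|}a\,\partial_A(b)$.

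Unwinding $d(r(a,b))$ with this identity, the goal is to recognize the resulting six-term expression as a sum of two defining relations, namely
\[
d(r(a,b))=r(\partial_A(a),b)+(-1)^{|a|}r(a,\partial_A(b)),
\]
which manifestly lies in $I$. The only delicate point, and the main obstacle I anticipate, is the Koszul sign bookkeeping: the signs $(-1)^{(|a|+1)|b|}=(-1)^{|a||b|+|b|}$ and $(-1)^{|a|(|b|+1)}=(-1)^{|a||b|+|a|}$ produced by $|\partial_A(a)|=|a|+1$ and $|\partial_A(b)|=|b|+1$ must match exactly the signs appearing in the direct expansion of $d$ applied to each of the three summands of $r(a,b)$. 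Once this matching is verified by a short but careful computation, $d$ descends unambiguously to $A^E$ and endows it with the required differential graded algebra structure.
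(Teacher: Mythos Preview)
Your proposal is correct and follows essentially the same approach as the paper: both reduce the claim to showing that the defining ideal $I$ is preserved by the smash-product differential $d$, and both verify this by computing $d(r(a,b))$ and regrouping it as $r(\partial_A(a),b)+(-1)^{|a|}r(a,\partial_A(b))\in I$. Your write-up is in fact slightly more explicit about the logical structure (why checking on generators suffices via the graded Leibniz rule, and why homogeneity of the relations gives the grading on the quotient), but the computation and the key identity are identical to the paper's.
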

\begin{proof}
It suffices to show that the ideal $I$ generated by $$1\#(ab)-a\#b-(-1)^{|a||b|}b\#a$$ for all homogenous elements $a,b\in A\;{\rm or}\;\mathbb{A}$ is a differential graded ideal in $A\#\mathbf U(A)$, which follows from
\begin{eqnarray*}
d(1\#(ab)-a\#b-(-1)^{|a||b|}b\#a)&=&1\#d_{\mathbf U(A)}(ab)-d(a\#b)-(-1)^{|a||b|}d(b\#a)\\
&=&1\#d_A(a)b+(-1)^{|a|}1\#ad_A(b)-d_A(a)\#b-(-1)^{|a|}a\#d_A(b)\\
&& -(-1)^{|a||b|}d_A(b)\#a-(-1)^{|a||b|+|b|}b\#d_A(a)\\
&=&\left(1\#d_A(a)b-d_A(a)\#b-(-1)^{|d_Aa||b|}b\#d_A(a)\right)\\
&& +(-1)^{|a|}\left(1\#ad_A(b)-a\#d_A(b)-(-)^{|a||d_Ab|}d_A(b)\#a\right)\\
&\in& I.
\end{eqnarray*}
\end{proof}

\subsubsection{The relationship between $A^e$ and $A^E$}
We first define a differential graded algebra map $\alpha: A^e\to A^E$ by
\begin{align*}
\alpha(m_a)=\overline{a\#1},\ \alpha(h_a)=\overline{1\#a},\ \mbox{for all}\ a\in A.
\end{align*}
\begin{lemma}
The map $\alpha:A^e\to A^E$ is a well-defined differential graded algebra map.
\end{lemma}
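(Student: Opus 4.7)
The plan is to define $\alpha$ on the free graded algebra generated by $\{m_a,h_a\mid a\in A\}$, verify that it annihilates the defining relations of $A^e$, and then check it commutes with the differentials on generators. The assignment $m_a\mapsto\overline{a\#1}$, $h_a\mapsto\overline{1\#a}$ is $\Bbbk$-linear in $a$ and degree-preserving, so it extends uniquely to a graded algebra map from the tensor algebra on $m_A\oplus h_A$ into $A^E$; the only question is whether it descends to the quotient $A^e$.

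First I would dispatch the three easy relations. Using the smash product formula, $(a\#1)(b\#1)=a(1\cdot b)\#1=ab\#1$, which handles $m_{ab}=m_am_b$; the defining relation of $A^E$ says $\overline{1\#(ab)}=\overline{a\#b}+(-1)^{|a||b|}\overline{b\#a}$, which takes care of $h_{ab}=m_ah_b+(-1)^{|a||b|}m_bh_a$; and $m_1=1$ is immediate from $\overline{1\#1}=1$. For $h_{\{a,b\}}=h_ah_b-(-1)^{|a||b|}h_bh_a$, the defining relation $ab-(-1)^{|a||b|}ba=\{a,b\}$ in $\mathbf{U}(A)$ gives $\alpha$ of the right-hand side equal to $\overline{1\#\{a,b\}}=\alpha(h_{\{a,b\}})$.

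The main step is the relation $m_{\{a,b\}}=h_am_b-(-1)^{|a||b|}m_bh_a$. Using the smash-product multiplication together with the primitive coproduct $\Delta(a)=a\otimes 1+1\otimes a$ on $\mathbb{A}\subset\mathbf{U}(A)$ and the action $a\cdot b=\{a,b\}$, I compute
\[
(1\#a)(b\#1)=(a\cdot b)\#1+(-1)^{|a||b|}b\#a=\{a,b\}\#1+(-1)^{|a||b|}b\#a,
\]
while $(b\#1)(1\#a)=b\#a$. Subtracting the second (with its sign) leaves $\{a,b\}\#1$, matching $\alpha(m_{\{a,b\}})$. This is where the relations of $A^e$ are engineered to match the smash-product structure of $A^E$, and it is the main (though still routine) obstacle of the proof: the cancellation of the cross term $(-1)^{|a||b|}b\#a$ is precisely what forces the defining formula for $m_{\{a,b\}}$ in $A^e$.

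Finally I would check compatibility with the differentials. Since both $\partial$ on $A^e$ and the induced $d$ on $A^E$ satisfy the graded Leibniz rule, it suffices to verify the equation on the generators $m_a$ and $h_a$. Using $\partial_{\mathbf{U}(A)}(1)=0$, one has $d(\overline{a\#1})=\overline{d_A(a)\#1}=\alpha(m_{d(a)})=\alpha(\partial m_a)$, and similarly $d(\overline{1\#a})=\overline{1\#\partial_{\mathbf{U}(A)}(a)}=\overline{1\#d_A(a)}=\alpha(h_{d(a)})=\alpha(\partial h_a)$, which completes the verification.
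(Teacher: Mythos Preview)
Your proof is correct and follows essentially the same approach as the paper: extend $\alpha$ multiplicatively from the generators, verify it respects each of the five defining relations of $A^e$ (using the smash-product identity $(1\#a)(b\#1)=\{a,b\}\#1+(-1)^{|a||b|}b\#a$ and the quotient relation defining $A^E$), and then check compatibility with the differential on generators. The organization differs only cosmetically---you single out the $m_{\{a,b\}}$ relation as the ``main step,'' while the paper treats all relations on equal footing---but the computations are identical.
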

\begin{proof}
Since $A^e$ is generated by $m_A$ and $h_A$ as differential graded algebras. It suffices to show that $\alpha$ preserves the relations in $A^e$ and commutes with differentials. Firstly, we use the fact that $A$ and $\mathbf U(A)$ are differential graded subalgebras of the smash product $A\#\mathbf U(A)$. Thus we have
\begin{align*}
\alpha(m_1)&=\overline{1\#1}=1_{A^E}=\alpha(1_{A^e}),\\
\alpha(m_{ab})&=\overline{(ab)\#1}=\overline{(a\#1)(b\#1)}=\alpha(m_a)\alpha(m_b)=\alpha(m_am_b),\\
\alpha(h_{\{a,b\}})&=\overline{1\#\{a,b\}}=\overline{1\#(a\otimes b-(-1)^{|a||b|}b\otimes a)}=\overline{(1\#a)(1\#b)}-\overline{(-1)^{|a||b|}(1\#b)(1\#a)}=\alpha(h_ah_b-(-)^{|a||b|}h_bh_b),
\end{align*}
for all homogenous elements $a,b\in A\;{\rm or}\;\mathbb{A}$.

Secondly, we note that $(a\#1)(1\#b)=a\#b$ and $(1\#a)(b\#1)=\{a.b\}\#1+(-1)^{|a||b|}b\#a$ in $A\#\mathbf U(A)$ for any homogenous elements $a,b\in A\;{\rm or}\;\mathbb{A}$. Hence we have
\begin{align*}
\alpha(h_{ab})&=\overline{1\#(ab)}=\overline{a\#b+(-1)^{|a||b|}b\#a}=\overline{(a\#1)(1\#b)+(-1)^{|a||b|}(b\#1)(a\#1)}=\alpha(m_ah_b+(-1)^{|a||b|}m_bh_a),\\
\alpha(m_{\{a,b\}})&=\overline{\{a,b\}\otimes 1}=\overline{(1\#a)(b\#1)-(-1)^{|a||b|}b\#a}=\alpha(h_am_b-(-1)^{|a||b|}m_bh_a),
\end{align*}
for all homogenous elements $a,b\in A\;{\rm or}\;\mathbb{A}$.

Finally, it is clear that $$d(\alpha(m_a))=d(a\#1)=\partial_A(a)\#1=\alpha(m_{\partial_A(a)})=\alpha(\partial(m_a))$$ and $$d(\alpha(h_a))=d(1\#a)=1\#\partial_A(a)=\alpha(h_{\partial_A(a)})=\alpha(\partial(h_a))$$ for all homogenous elements $a\in A\;{\rm or}\;\mathbb{A}$. This completes the proof.
\end{proof}

Next, we want to construct an inverse differential graded algebra map of $\alpha$. Note that in $A^e$, we have $h_{\{a,b\}}=h_ah_b-(-1)^{|a||b|}h_bh_a$ for all homogenous elements $a,b\in A$. Hence $h: (A,\{,\})\to (A^e)_L$ is a differential graded Lie algebra map. Thus there is a unique differential graded algebra map $\widetilde{h}: \mathbf U(A)\to A^e$ given by $\widetilde{h}(b_1\otimes \cdots \otimes b_s)=h_{b_1}\cdots h_{bs}$ for all $b_i\in A$. We define the map $\beta: A\#\mathbf{U}(A)\to A^e$ by
\begin{align*}
\beta(a\#f)=m_a\widetilde{h}(f)
\end{align*}
for all $a\in A$ and $f\in \mathbf U(A)$.

\begin{lemma}
The map $\beta: A\#\mathbf U(A)\to A^e$ is a well-defined differential graded algebra map.
\end{lemma}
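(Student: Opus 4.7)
The plan is to verify three properties of $\beta$: unit preservation, compatibility with the twisted smash-product multiplication, and compatibility with the differentials. The $\Bbbk$-linearity is immediate from $A\#\mathbf{U}(A)\cong A\otimes\mathbf{U}(A)$ as graded vector spaces, combined with the $\Bbbk$-linearity of $a\mapsto m_a$ and of $\widetilde h$; and $\beta(1\#1)=m_1\widetilde h(1)=1_{A^e}$ handles the unit.

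The heart of the argument is multiplicativity. Given the twisted formula
\[
(a\#g)(b\#h)=\sum_{(g)}(-1)^{|g_2||b|}\,a(g_1\cdot b)\#g_2 h,
\]
the identity $\beta\bigl((a\#g)(b\#h)\bigr)=\beta(a\#g)\,\beta(b\#h)$ reduces, using $m_{xy}=m_xm_y$ and the algebra-map property of $\widetilde h$, to the commutation rule
\[
\widetilde h(g)\,m_b=\sum_{(g)}(-1)^{|g_2||b|}\,m_{g_1\cdot b}\,\widetilde h(g_2)\qquad(\ast)
\]
inside $A^e$, for all homogeneous $b\in A$ and $g\in\mathbf{U}(A)$. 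I would prove $(\ast)$ by induction on the filtration degree of $g$. For $g\in\mathbb{A}$, since $\Delta(g)=g\otimes 1+1\otimes g$, the right-hand side becomes $m_{g\cdot b}+(-1)^{|g||b|}m_b\widetilde h(g)=m_{\{g,b\}}+(-1)^{|g||b|}m_bh_g$, which equals $h_g m_b=\widetilde h(g)m_b$ by the defining relation $m_{\{a,b\}}=h_am_b-(-1)^{|a||b|}m_bh_a$ of $A^e$. For the inductive step, writing $g=fg'$ and applying $(\ast)$ first to $g'$ and then to $f$, one uses the module-action identity $f_1\cdot(g'_1\cdot b)=(f_1 g'_1)\cdot b$ together with the Koszul-signed coproduct formula $\Delta(fg')=\sum(-1)^{|f_2||g'_1|}f_1g'_1\otimes f_2g'_2$; the combined sign $|g'_2||b|+|f_2|(|g'_1|+|b|)$ arising on the iterated left-hand side matches the sign $|f_2||g'_1|+(|f_2|+|g'_2|)|b|$ produced by applying $(\ast)$ directly to the product $fg'$, so the induction closes.

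For compatibility with the differentials, note that $h:(A,\{,\},d)\to(A^e,[,],\partial)$ is a dg Lie algebra map---the relations $h_{\{a,b\}}=h_ah_b-(-1)^{|a||b|}h_bh_a$ and $\partial(h_a)=h_{d(a)}$ both hold by construction of $A^e$---so the universal property of $\mathbf{U}(A)$ forces its extension $\widetilde h:\mathbf{U}(A)\to A^e$ to satisfy $\partial\circ\widetilde h=\widetilde h\circ\partial_{\mathbf{U}(A)}$. Combined with $\partial(m_a)=m_{d(a)}$ and the graded Leibniz rule in $A^e$, this gives
\[
\partial\beta(a\#f)=m_{d(a)}\widetilde h(f)+(-1)^{|a|}m_a\widetilde h(\partial_{\mathbf{U}(A)}(f))=\beta\bigl(d(a\#f)\bigr).
\]
The main obstacle is $(\ast)$: its proof is essentially the statement that the commutator relation $h_am_b-(-1)^{|a||b|}m_bh_a=m_{\{a,b\}}$ baked into $A^e$ is precisely the braiding encoded by the smash-product action of $\mathbf{U}(A)$ on $A$, and one must track the Koszul signs carefully on both sides of the induction to make this match work.
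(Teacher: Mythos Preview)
Your proof is correct and follows essentially the same strategy as the paper: both arguments establish multiplicativity by induction on the PBW filtration of $\mathbf{U}(A)$, and both verify differential compatibility via $\partial\circ\widetilde h=\widetilde h\circ\partial_{\mathbf{U}(A)}$ together with the Leibniz rule in $A^e$. The only organizational difference is that you isolate the commutation identity $(\ast)$ as the inductive kernel and then prove it for arbitrary products $g=fg'$ using the Koszul-signed coproduct, whereas the paper inducts directly on $\beta\bigl((a\#f)(b\#g)\bigr)=\beta(a\#f)\beta(b\#g)$ by peeling off a single generator $c\in\mathbb{A}$ from the right of $f=h\otimes c$ and using the explicit formula $(1\#c)(b\#g)=\{c,b\}\#g+(-1)^{|c||b|}b\#(c\otimes g)$; the two computations are equivalent, yours being a slightly more Sweedler-notation-flavored packaging of the same induction.
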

\begin{proof}
Note that $\mathbf U(A)$ has a filtration $F_i$ spanned by $\{b_1\otimes \cdots \otimes b_s|b_j\in \mathbb{A},s\le i\}$. We want to show that
\begin{align*}
\beta\left((a\#f)(b\#g)\right)=\beta(a\#f)\beta(b\#g),
\end{align*}
for all $a,b\in A$ and $f,g\in \mathbf U(A)$. We prove the above identity by induction on the integer $n$ where $f\in F_n\setminus F_{n-1}$. When $n=0$, we can take $f=1_{\mathbf U(A)}$. Then
\begin{align*}
\beta\left((a\#1)(b\#g)\right)=\beta(ab\#g)=m_{ab}\widetilde{h}(g)=m_a(m_b\widetilde{h}(g))=\beta(a\#1)\beta(b\#g).
\end{align*}
Suppose that the above identity holds for any $f\in F_n\setminus F_{n-1}$. Now assume that $f\in F_{n+1}\setminus F_{n}$. Without loss of generality, we can further assume that $f$ is homogenous, and write $f=h\otimes c$ for some $h\in F_n\setminus F_{n-1}$ and $c\in \mathbb{A}$. Then we have
\begin{eqnarray*}
\beta\left((a\#f)(b\#g)\right)&=&\beta\left((a\#(h\otimes c))(b\#g)\right)\\
&=&\beta\left((a\#h)(1\#c)(b\#g)\right)\\
&=&\beta\left((a\#h)(\{c,b\}\#g+(-1)^{|c||b|}b\#(c\otimes g)\right)\\
&=&\beta(a\#h)\beta\left(\{c,b\}\#g+(-1)^{|c||b|}b\#(c\otimes g)\right)\\
&=&\beta(a\#h)\left(m_{\{c,b\}}\widetilde h(g)+(-1)^{|c||b|}m_b\widetilde h(c\otimes g)\right)\\
&=&\beta(a\#h)\left(m_{\{c,b\}}+(-1)^{|c||b|}m_bh_c\right)\widetilde h(g)\\
&=&\beta(a\#h)h_cm_b\widetilde h(g)\\
&=&\beta(a\#h)\beta(1\#c)\beta(b\#g)\\
&=&\beta(a\#(h\otimes c))\beta(b\#g)\\
&=&\beta(a\#f)\beta(b\#g).
\end{eqnarray*}
Therefore the induction step completes. Moreover, it is clear that
\begin{align*}
\beta(1_{A\#\mathbf U(A)})=\beta(1_A\#1_{\mathbf U(A)})=m_1\widetilde h(1_{\mathbf U(A)})=1_{A^e}.
\end{align*}
So $\beta$ is a graded algebra map. Note also that $\beta$ is compatible with the differentials, which follows from
\begin{eqnarray*}
\partial(\beta(a\#f))&=&\partial(m_a\widetilde h(f))\\
&=&\partial(m_a)\widetilde h(f)+(-1)^{|a|}m_a\partial(\widetilde h(f))\\
&=&m_{\partial_A(a)}\widetilde h(f)+(-1)^{|a|}m_a\widetilde h(\partial_{\mathbf U(A)} (f))\\
&=&\beta(\partial_A(a)\#f+(-1)^{|a|}a\#\partial_{\mathbf U(A)}(f))\\
&=&\beta(d(a\#f)),
\end{eqnarray*}
for all homogenous elements $a\in A$ and $f\in \mathbf U(A)$.
\end{proof}

Now we can state and prove our main result.
\begin{thm}\label{same}
Let $A\in\mathbf{DGPA}$. Then $A^e\cong A^E$ as differential graded algebras.
\end{thm}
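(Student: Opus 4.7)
The plan is to show that the map $\beta$ descends to a well-defined differential graded algebra map $\bar\beta: A^E \to A^e$ which is the two-sided inverse of $\alpha$.

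The first step, and the key observation, is to check that $\beta$ annihilates the defining ideal $I$ of $A^E$. Given homogeneous elements $a,b$ in the underlying vector space of $A$ (identified with that of $\mathbb{A}$), I compute
\begin{align*}
\beta\bigl(1\#(ab)\bigr) &= m_1\,\widetilde h(ab) = h_{ab},\\
\beta(a\# b) &= m_a\,\widetilde h(b) = m_a h_b,\\
\beta(b\# a) &= m_b\,\widetilde h(a) = m_b h_a,
\end{align*}
so that
\[
\beta\bigl(1\#(ab)-a\#b-(-1)^{|a||b|}b\#a\bigr)=h_{ab}-m_ah_b-(-1)^{|a||b|}m_bh_a,
\]
which vanishes by the second defining relation of $A^e$ (the one coming from the biderivation axiom). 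Since $\beta$ is a differential graded algebra map and vanishes on a set of generators of the two-sided dg ideal $I$, it vanishes on $I$, so it induces a differential graded algebra map $\bar\beta:A^E\to A^e$.

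Next I would verify that $\bar\beta$ is a two-sided inverse to $\alpha$. For $\bar\beta\circ\alpha=\mathrm{id}_{A^e}$ it suffices, because $A^e$ is generated as a dg algebra by $m_A\cup h_A$, to compute on these generators:
\[
\bar\beta\alpha(m_a)=\bar\beta(\overline{a\#1})=m_a\widetilde h(1)=m_a,\qquad
\bar\beta\alpha(h_a)=\bar\beta(\overline{1\#a})=m_1\widetilde h(a)=h_a.
\]
For $\alpha\circ\bar\beta=\mathrm{id}_{A^E}$ I would use the fact that $A^E$ is spanned (as a $\Bbbk$-vector space) by the classes of simple tensors $\overline{a\#(b_1\otimes\cdots\otimes b_s)}$. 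For such an element
\[
\alpha\bar\beta\bigl(\overline{a\#(b_1\otimes\cdots\otimes b_s)}\bigr)=\alpha(m_a h_{b_1}\cdots h_{b_s})=\overline{(a\#1)(1\#b_1)\cdots(1\#b_s)},
\]
and a short induction using the smash-product multiplication rule $(1\#f)(1\#g)=1\#(fg)$ and $(a\#1)(1\#f)=a\#f$ (both of which follow from $\Delta$ being primitive on $\mathbb{A}$) reduces this to $\overline{a\#(b_1\otimes\cdots\otimes b_s)}$.

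The only real obstacle is the first step, i.e., the bookkeeping in interpreting the relation $1\#(ab)-a\#b-(-1)^{|a||b|}b\#a$: one must read ``$ab$'' as the product taken in $A$ and then re-inserted into $\mathbf U(A)$ via the embedding $A\cong\mathbb{A}\hookrightarrow\mathbf U(A)$, so that $\widetilde h(ab)=h_{ab}$ matches the single-subscript generator of $A^e$ and the defining relation of $A^e$ comes into play exactly as needed. Once this identification is pinned down, the rest of the argument is formal: $\bar\beta$ is dg because $\beta$ is, and the two composition checks together establish the isomorphism $A^e\cong A^E$.
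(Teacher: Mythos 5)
Your proposal is correct and follows essentially the same route as the paper: you show $\beta$ kills the defining relation $1\#(ab)-a\#b-(-1)^{|a||b|}b\#a$ via the identity $h_{ab}=m_ah_b+(-1)^{|a||b|}m_bh_a$, so it descends to $A^E$, and then verify $\bar\beta\alpha=\mathrm{id}_{A^e}$ and $\alpha\bar\beta=\mathrm{id}_{A^E}$ on generators (the paper invokes generation of $A^E$ by $A\#1$ and $1\#\mathbb{A}$ where you instead check on a spanning set of simple tensors, a merely cosmetic difference).
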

\begin{proof}
It is clear that
\begin{align*}
\beta(1\#(ab))=\widetilde h(ab)=h_{ab}=m_ah_b+(-1)^{|a||b|}m_bh_a=\beta(a\#b+(-)^{|a||b|}b\#a),
\end{align*}
for all homogenous elements $a,b\in A\;{\rm or}\;\mathbb{A}$. Hence, the differential graded algebra map $\beta: A\#\mathbf U(A)\to A^e$ factors through $A^E$ as a differential graded algebra map, which is still denoted by $\beta$. Then we see that $\beta\alpha(m_a)=m_a$ and $\beta\alpha(h_a)=h_a$. Since $A^e$ is generated by $m_A$ and $h_A$, we have $\beta\alpha=1_{A^e}$. On the other hand, $\alpha\beta(a\#1)=a\#1$ and $\alpha\beta(1\#b)=1\#b$ for all $a\in A$ and $b\in\mathbb{A}$. By the same reason, we have $\alpha\beta=1_{A^E}$. This completes the proof.
\end{proof}

\begin{remark}
By Theorem \ref{same}, we can use $A^e$ and $A^E$ freely. In particular, the construction of $A^e$ is convenient to study the PBW-basis for the universal enveloping algebra; and the construction of $A^E$ is helpful to discuss the deformation and cohomology theory for $A$.
\end{remark}

\subsection{The property of $A^e$} We will study the relationship between $\mathbf{ DGP}(A)$ and $\mathbf{ DG}(A^e)$ in this subsection. The following is our main result of this subsection:
\begin{thm}\label{equivalence}
Let $A\in\mathbf{DGPA}$. Then $\mathbf{DGP}(A)\cong \mathbf{DG}(A^e)$.
\end{thm}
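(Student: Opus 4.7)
The plan is to construct a pair of mutually inverse functors $F: \mathbf{DGP}(A) \to \mathbf{DG}(A^e)$ and $G: \mathbf{DG}(A^e) \to \mathbf{DGP}(A)$ whose effect on underlying graded vector spaces and on morphisms is the identity. The guiding observation is that the five generating relations of $A^e$ in Subsection~3.1.1 were designed so as to encode exactly the five axioms of a left DG Poisson module from Definition~\ref{def2}: the relation $m_{ab}=m_am_b$ together with $m_1=1$ encodes the associative module axiom; the relation $h_{ab}=m_ah_b+(-1)^{|a||b|}m_bh_a$ is precisely (ib); the relation $m_{\{a,b\}}=h_am_b-(-1)^{|a||b|}m_bh_a$ rearranges to (ia); and $h_{\{a,b\}}=h_ah_b-(-1)^{|a||b|}h_bh_a$ is (ic).

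For the functor $F$, given $(M,\partial,\{,\}_M)\in\mathbf{DGP}(A)$ I would first define a $\Bbbk$-linear map $\varphi:m_A\oplus h_A\to\End_{\Bbbk}(M)$ by $\varphi(m_a)(x)=a\cdot x$ and $\varphi(h_a)(x)=\{a,x\}_M$, then extend $\varphi$ to the tensor algebra on $m_A\oplus h_A$. One checks that each of the five generating relations of $A^e$ lies in the kernel of this extended map by the translation recorded above, so $\varphi$ descends to a graded algebra map $A^e\to\End_{\Bbbk}(M)$. Compatibility with the differential $\partial$ on $A^e$ only needs to be verified on the generators $m_a$ and $h_a$ via Lemma~\ref{lem1}: applied to $m_a$ it recovers the DG module condition for $\cdot$, and applied to $h_a$ it recovers axiom (ii) of Definition~\ref{def2}. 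Thus $M$ acquires the structure of an object in $\mathbf{DG}(A^e)$.

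For the inverse functor $G$, given $(N,\partial)\in\mathbf{DG}(A^e)$, define $a\cdot n:=m_a\cdot n$ and $\{a,n\}_N:=h_a\cdot n$. Reading the same five relations in the other direction furnishes the DG Poisson module axioms verbatim, and restricting the DG $A^e$-module identity $\partial(xn)=\partial_{A^e}(x)n+(-1)^{|x|}x\partial(n)$ to $x=m_a$ and $x=h_a$ supplies the two required Leibniz identities on $N$. By construction $F$ and $G$ are mutually inverse on objects. A $\Bbbk$-linear graded map $f:M\to M'$ of degree zero is a DG Poisson module map iff it commutes with $\partial$, intertwines $a\cdot-$, and intertwines $\{a,-\}$; since $A^e$ is generated as a DG algebra by $m_A\cup h_A$, this is the same as being a DG $A^e$-module map. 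Hence $F$ and $G$ extend to an isomorphism of categories.

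The main obstacle is not conceptual but combinatorial: one must confirm line by line that the Koszul signs appearing in the defining relations of $A^e$ match those appearing in Definition~\ref{def2}. Because Lemma~\ref{lem1} has already shown that $\partial$ on $A^e$ respects all five relations, no additional sign subtleties arise on the differential side; the verification reduces to a routine dictionary between the two sets of axioms.
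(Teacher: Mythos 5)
Your proposal is correct and follows essentially the same route as the paper: the paper's Lemmas \ref{lem2}, \ref{lem3} and \ref{lem4} carry out exactly your dictionary, defining $m_a\cdot x=ax$, $h_a\cdot x=\{a,x\}_M$ (and conversely $a\ast x=m_ax$, $\{a,x\}_M=h_ax$), verifying the five relations against axioms (ia)--(ic) and the Leibniz conditions, and observing that morphisms coincide because $A^e$ is generated by $m_A$ and $h_A$. Your packaging through a graded algebra map $A^e\to\End_{\Bbbk}(M)$ is only a cosmetic variant of the paper's direct check that the action respects the defining relations.
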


In order to prove Theorem \ref{equivalence}, we begin with some lemmas.
\begin{lemma}\label{lem2}
Let $(A,\{,\}_A,d)\in\mathbf{DGPA}$ and $(M,\{,\}_M,\partial)\in\mathbf{DGP}(A)$. Then $M\in\mathbf{DG}(A^e)$.
\end{lemma}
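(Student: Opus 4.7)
The plan is to realize the $A^e$-action on $M$ by using the generators $m_a$ and $h_a$ directly: define
\[
m_a \cdot x := a\cdot x, \qquad h_a \cdot x := \{a,x\}_M,
\]
for all homogeneous $a\in A$ and $x\in M$, and extend multiplicatively. Since $A^e$ is the free graded algebra on $m_A\sqcup h_A$ modulo the five defining relations, the verification reduces to checking that each relation acts as zero on $M$, and that the induced action is compatible with $\partial_{A^e}$ and $\partial_M$.

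First I would dispose of the multiplicative relations. The relation $m_{ab}=m_am_b$ (and $m_1=1$) translates to the associativity/unit axioms of the $A$-module structure on $M$, which hold since $(M,\partial)\in\mathbf{DG}(A)$. The relation
\[
m_{\{a,b\}}=h_am_b-(-1)^{|a||b|}m_bh_a
\]
applied to $x\in M$ becomes $\{a,b\}_A\, x=\{a,bx\}_M-(-1)^{|a||b|}b\{a,x\}_M$, which is axiom (ia) of Definition~\ref{def2}. Similarly, $h_{ab}=m_ah_b+(-1)^{|a||b|}m_bh_a$ applied to $x$ is axiom (ib), and $h_{\{a,b\}}=h_ah_b-(-1)^{|a||b|}h_bh_a$ applied to $x$ is axiom (ic). So every generating relation is annihilated by the putative action, whence the action descends to $A^e$ and makes $M$ into a left graded $A^e$-module.

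Next I would check that $\partial=\partial_M$ serves as the differential of $M$ as an $A^e$-module, i.e.\ that
\[
\partial(w\cdot x)=\partial_{A^e}(w)\cdot x+(-1)^{|w|}w\cdot\partial(x)
\]
for every homogeneous $w\in A^e$ and $x\in M$. By the Leibniz rule in $A^e$ and an obvious induction on the word length in $m_A\cup h_A$, it suffices to verify this on generators. For $w=m_a$, the identity reduces to the defining compatibility $\partial(ax)=d(a)x+(-1)^{|a|}a\partial(x)$ of $(M,\partial)\in\mathbf{DG}(A)$; for $w=h_a$, it reduces to $\partial(\{a,x\}_M)=\{d(a),x\}_M+(-1)^{|a|}\{a,\partial(x)\}_M$, which is axiom (ii) of Definition~\ref{def2}. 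Since $\partial^2=0$ on $M$ by assumption, we conclude $(M,\partial)\in\mathbf{DG}(A^e)$.

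The proof is essentially bookkeeping; the only mild subtlety is confirming that the inductive extension of the Leibniz identity from generators to arbitrary words in $A^e$ is legitimate (it is, because one can argue on words in the free algebra and then pass to the quotient, since the relations have already been checked to be respected by both the action and the differential). I do not foresee a genuine obstacle.
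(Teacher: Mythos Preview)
Your proposal is correct and follows essentially the same approach as the paper: define the action on generators by $m_a\cdot x=ax$ and $h_a\cdot x=\{a,x\}_M$, verify that the five defining relations of $A^e$ act as zero using the axioms (ia)--(ic) of Definition~\ref{def2} and the $A$-module structure, and then check Leibniz compatibility on generators. Your remark about the inductive extension of the Leibniz rule to arbitrary words is a useful clarification that the paper leaves implicit.
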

\begin{proof}
For any homogeneous elements $a\in A$ and $x\in M$, define an $A^e$-module action as follows:
$$m_a\cdot x:=ax,\;\;\;h_a\cdot x:=\{a,x\}_M.$$

Now we claim that the action we defined above preserves the relations of $A^e$. In fact, for any homogeneous elements $a,b\in A$ and $x\in M$, we have
\begin{eqnarray*}
m_{ab}\cdot x&=&(ab)x\\
&=&a(bx)\\
&=&(m_am_b)\cdot x,
\end{eqnarray*}
\begin{eqnarray*}
h_{ab}\cdot x&=&\{ab,x\}_M\\
&=&a\{b,x\}_M+(-1)^{|a||b|}b\{a,x\}_M\\
&=&(m_ah_b+(-1)^{|a||b|}m_bh_a)\cdot x,
\end{eqnarray*}
\begin{eqnarray*}
m_{\{a,b\}_A}\cdot x&=&\{a,b\}_Ax\\
&=&\{a,bx\}_M-(-1)^{|a||b|}b\{a,x\}_M\\
&=&(h_am_b-(-1)^{|a||b|}m_bh_a)\cdot x
\end{eqnarray*}
and
\begin{eqnarray*}
h_{\{a,b\}_A}\cdot x&=&\{\{a,b\}_A,x\}_M\\
&=&\{a,\{b,x\}_M\}_M-(-1)^{|a||b|}\{b,\{a,x\}_M\}_M\\
&=&(h_ah_b-(-1)^{|a||b|}h_bh_a)\cdot x,
\end{eqnarray*}
which finish the proof of the claim and we have that $M$ is a graded $A^e$ module. To complete the proof, we need to check that $\partial$ is compatible with the action ``$\cdot$'', which follows from
\begin{eqnarray*}
\partial(m_a\cdot x)&=&\partial(ax)\\
&=&d(a)x+(-1)^{|a|}a\partial(x)\\
&=&m_{d(a)}\cdot x+(-1)^{|a|}m_a\cdot\partial(x)\\
&=&\partial(m_a)\cdot x+(-1)^{|a|}m_a\cdot\partial(x)
\end{eqnarray*}
and
\begin{eqnarray*}
\partial(h_a\cdot x)&=&\partial(\{a,x\}_M)\\
&=&\{d(a),x\}_M+(-1)^{|a|}\{a,\partial(x)\}_M\\
&=&h_{d(a)}\cdot x+(-1)^{|a|}h_a\cdot \partial(x)\\
&=&\partial(h_a)\cdot x+(-1)^{|a|}h_a\cdot\partial(x).
\end{eqnarray*}
\end{proof}

\begin{lemma}\label{lem3}
Let $(A,\{,\}_A,d)\in\mathbf{DGPA}$ and $(M,\partial)\in\mathbf{DG}(A^e)$. Then $M\in\mathbf{DGP}(A)$.
\end{lemma}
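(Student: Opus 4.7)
The plan is to invert the construction used in Lemma \ref{lem2}: given the $A^e$-action and differential on $M$, peel off the generators $m_a$ and $h_a$ to recover an $A$-module structure and a Poisson bracket on $M$, then verify the axioms (ia)--(ic) of Definition \ref{def2} together with the compatibility between $\partial$ and the bracket.

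First I would define, for any homogeneous $a \in A$ and $x \in M$,
\[
ax := m_a \cdot x, \qquad \{a,x\}_M := h_a \cdot x.
\]
The fact that this makes $M$ a left graded $A$-module is immediate from the defining relations $m_{ab}=m_a m_b$ and $m_1 = 1$ of $A^e$. The compatibility $\partial(ax) = d(a)x + (-1)^{|a|} a\, \partial(x)$ follows from $\partial(m_a)=m_{d(a)}$ together with the Leibniz rule for the $A^e$-action on $(M,\partial)$, so $(M,\partial)\in\mathbf{DG}(A)$ as required for the underlying DG module structure.

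Next I would verify the three Poisson module axioms by translating each one into a relation in $A^e$ that already holds:
\begin{itemize}
\item[(ia)] Using $m_{\{a,b\}} = h_a m_b - (-1)^{|a||b|} m_b h_a$, I apply both sides to $x$ to obtain
\[
\{a,b\}_A\, x = \{a,bx\}_M - (-1)^{|a||b|} b\,\{a,x\}_M,
\]
which rearranges to (ia).
\item[(ib)] Apply $h_{ab} = m_a h_b + (-1)^{|a||b|} m_b h_a$ to $x$ to get (ib) directly.
\item[(ic)] Apply $h_{\{a,b\}} = h_a h_b - (-1)^{|a||b|} h_b h_a$ to $x$ to get (ic) directly.
\end{itemize}
Finally, the compatibility $\partial(\{a,x\}_M) = \{d(a),x\}_M + (-1)^{|a|}\{a,\partial(x)\}_M$ follows by expanding $\partial(h_a \cdot x)$ with the Leibniz rule for the DG $A^e$-module $M$ and using $\partial(h_a) = h_{d(a)}$.

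There is no real obstacle here: the entire argument is the mirror image of Lemma \ref{lem2}, and each of the four defining relations of $A^e$ encodes exactly one axiom of a differential graded Poisson module. The only thing to watch is that the relations in $A^e$ are applied on both sides to an arbitrary $x\in M$ (which is legitimate since $M$ is an $A^e$-module), and that the signs match those of Definition \ref{def2} after the rearrangements above; both are routine.
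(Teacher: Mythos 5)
Your proposal is correct and follows essentially the same route as the paper: define $a\ast x:=m_a\cdot x$ and $\{a,x\}_M:=h_a\cdot x$, and read off axioms (ia)--(ic) and the differential compatibility directly from the four defining relations of $A^e$ together with $\partial(m_a)=m_{d(a)}$, $\partial(h_a)=h_{d(a)}$ and the Leibniz rule for the DG $A^e$-module structure. The sign rearrangement you note for (ia) matches the paper's computation, so nothing further is needed.
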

\begin{proof}
Let $a\in A$ and $x\in M$ be any homogeneous elements. We define the left graded $A$-module structure and left graded Poisson module structure as follows:
$$a\ast x:=m_ax,\;\;\;\{a,x\}_M:=h_ax.$$
Now we claim that $(M,\ast,\{,\}_M,\partial)$ is a left differential $\mathbb{Z}$-graded Poisson module over $A$.

In fact, it is trivial that $\partial: M\to M$ is a $\Bbbk$-linear map of degree 1 such that $\partial^2=0$. Note that for any homogeneous elements $a,b\in A$ and $x\in M$, we have
\begin{eqnarray*}
\partial(a\ast x)&=&\partial(m_ax)\\
&=&\partial(m_a)x+(-1)^{|a|}m_a\partial(x)\\
&=&m_{d(a)}x+(-1)^{|a|}m_a\partial(x)\\
&=&d(a)\ast x+(-1)^{|a|}a\ast \partial(x),
\end{eqnarray*}
 \begin{eqnarray*}
 \{a,b\ast x\}_M&=&\{a,m_bx\}_M\\
 &=&h_am_bx\\
 &=&(m_{\{a,b\}_A}+(-1)^{|a||b|}m_bh_a)x\\
 &=&\{a,b\}_A\ast x+(-1)^{|a||b|}b\ast \{a,x\}_M,
 \end{eqnarray*}
\begin{eqnarray*}
 \{ab,x\}_M&=&h_{ab}x\\
 &=&(m_ah_b+(-1)^{|a||b|}m_bh_a)x\\
 &=&a\ast\{b,x\}_M+(-1)^{|a||b|}b\ast \{a,x\}_M,
 \end{eqnarray*}
 \begin{eqnarray*}
 \{a,\{b,x\}_M\}_M&=&h_ah_bx\\
 &=&(h_{\{a,b\}}+(-1)^{|a||b|}h_bh_a)x\\
 &=&\{\{a,b\}_M, x\}+(-1)^{|a||b|}\{b,\{a,x\}_M\}_M
 \end{eqnarray*}
and
\begin{eqnarray*}
 \partial(\{a,x\}_M)&=&\partial(h_ax)\\
 &=&\partial(h_a)x+(-1)^{|a|}h_a\partial(x)\\
 &=&h_{d(a)}x+(-1)^{|a|}h_a\partial(x)\\
 &=&\{d(a),x\}_M+(-1)^{|a|}\{a,\partial(x)\}_M.
 \end{eqnarray*}
 Therefore, we finish the proof.
 \end{proof}

From Lemmas \ref{lem2} and \ref{lem3}, we can see that there is a bijective correspondence between the objects of the category $\mathbf{ DGP}(A)$ and those of the category $\mathbf{ DG}(A^e)$. Further, the next result shows that there is also a bijective correspondence between the Hom-sets of the category $\mathbf{ DGP}(A)$ and  those of the category $\mathbf{ DG}(A^e)$, where $A\in\mathbf{DGPA}$.

\begin{lemma}\label{lem4}
We have the following observations.
\begin{enumerate}
  \item Let $A\in\mathbf{DGPA}$ and $M,N\in \mathbf{ DGP}(A)$. If $f:M\to N$ is a left differential graded Poisson $A$-module map, then it induces a differential graded $A^e$-module map $\widetilde{f}\;(=f)$.
  \item Let $A\in\mathbf{DGPA}$ and $M,N\in \mathbf{ DG}(A^e)$. If $g:M\to N$ is a left differential graded $A^e$-module map, then it induces a differential graded Poisson $A$-module map $\widehat{g}\;(=g)$.
\end{enumerate}
\end{lemma}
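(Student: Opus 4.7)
The plan is to observe that, by the identifications worked out in Lemmas \ref{lem2} and \ref{lem3}, the underlying $\Bbbk$-linear map does not change: passing between $\mathbf{DGP}(A)$ and $\mathbf{DG}(A^e)$ amounts only to re-labelling the structure maps via $m_a\cdot x = ax$ and $h_a\cdot x = \{a,x\}_M$. So for each part I only need to verify that the compatibility required on one side implies the compatibility required on the other, using that $A^e$ is generated as a differential graded algebra by $m_A \cup h_A$.

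For part (1), I would take $f:M\to N$ a left differential graded Poisson $A$-module map and define $\widetilde f := f$ as a $\Bbbk$-linear map. Since $A^e$ is generated by $m_A$ and $h_A$, it suffices to check $A^e$-linearity on these generators. The identity $\widetilde f(m_a\cdot x) = f(ax) = af(x) = m_a\cdot \widetilde f(x)$ comes directly from $A$-linearity of $f$, and $\widetilde f(h_a\cdot x) = f(\{a,x\}_M) = \{a,f(x)\}_N = h_a\cdot \widetilde f(x)$ comes from $f$ being a Poisson module map. Compatibility with the differentials is just the hypothesis $f\partial_M = \partial_N f$, which is unchanged by the relabelling.

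For part (2), given $g:M\to N$ a left differential graded $A^e$-module map, I set $\widehat g := g$ and use the $A^e$-linearity on $m_a$ to get $g(a*x)=g(m_ax)=m_ag(x)=a*g(x)$, showing $\widehat g$ is an $A$-module map, and on $h_a$ to get $g(\{a,x\}_M) = g(h_ax) = h_a g(x) = \{a,g(x)\}_N$, showing $\widehat g$ preserves the Poisson bracket action. Commutation with $\partial$ is again unchanged.

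There is essentially no obstacle: the only thing to be careful about is that although $A^e$ has many more relations than the two generating families, verifying $A^e$-linearity on the generators is enough because any $A^e$-linear map extends uniquely from its action on a generating set; multiplicativity of the action is automatic once the generators are handled. Thus the correspondence on morphisms is an inverse pair of identifications, completing the proof together with Lemmas \ref{lem2} and \ref{lem3} to establish Theorem \ref{equivalence}.
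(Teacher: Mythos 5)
Your proposal is correct and follows essentially the same route as the paper: define $\widetilde f=f$ and $\widehat g=g$ and verify the module-map conditions on the generators $m_a$ and $h_a$ via the identifications $m_a\cdot x=ax$, $h_a\cdot x=\{a,x\}_M$, with compatibility with the differentials carried over unchanged. The only difference is that you make explicit the (correct) observation that checking $A^e$-linearity on the generating set $m_A\cup h_A$ suffices, which the paper leaves implicit.
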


\begin{proof}
(1) It follows from
$$\widetilde{f}(m_a\cdot x)=f(ax)=af(x)=m_a\cdot f(x)=m_a\cdot \widetilde{f}(x)$$and
$$\widetilde{f}(h_a\cdot x)=\widetilde{f}(\{a,x\})=f(\{a,x\})=\{a,f(x)\}=h_a\cdot f(x)=h_a\cdot \widetilde{f}(x)$$ for any homogeneous elements $m_a,h_a\in A^e$ and $x\in M$, since $f$ is a left differential graded Poisson $A$-module map.

(2) It follows from
$$\widehat{g}(a\ast x)=g(m_ax)=m_ag(x)=a\ast g(x)=a\ast \widehat{g}(x)$$and
$$\widehat{g}(\{a,x\})=g(h_ax)=h_ag(x)=\{a,g(x)\}=\{a,\widehat{g}(x)\}$$for any homogeneous elements $a\in A$ and $x\in M$, since $g$ is a left differential graded $A^e$-module map.
\end{proof}

Now we are ready to prove Theorem \ref{equivalence}.
\begin{proof}[Proof of Theorem \ref{equivalence}]
We will prove that $A^e$ we constructed is the differential graded algebra we need.
Let $F: \mathbf{ DGP}(A)\to\mathbf{ DG}(A^e)$ be a map sending each left differential $\mathbb{Z}$-graded Poisson module $_AM$ to the differential $\mathbb{Z}$-graded module $_{A^e}M$, and sending a left differential graded Poisson module map $f:_AM\to _AN$ to the differential graded $A^e$-module map $\widetilde{f}:_{A^e}M\to_{A^e}N$. Let $G: \mathbf{ DG}(A^e)\to\mathbf{ DGP}(A)$ be a map sending each left differential $\mathbb{Z}$-graded $A^e$-module $_{A^e}M$ to the differential  $\mathbb{Z}$-graded Poisson $A$-module $_{A}M$, and sending a left differential graded $A^e$-module map $g:_{A^e}M\to _{A^e}N$ to the differential graded Poisson $A$-module map $\widehat{g}:_{A}M\to_{A}N$. Now it is easy to check that $F$ and $G$ are two covariant functors such that
$$GF=1_{\mathbf{ DGP}(A)}\;\;\;{\rm and}\;\;\;FG=1_{\mathbf{ DG}(A^e)},$$
which completes the proof.
\end{proof}

\subsection{Universal enveloping algebras of differential graded Poisson algebras}
In this subsection, first we will give the definition of {\it universal enveloping algebra} of a differential $\mathbb{Z}$-graded Poisson algebra, then we prove that the differential $\mathbb{Z}$-graded algebra $A^e$ is the universal enveloping algebra of the differential $\mathbb{Z}$-graded Poisson algebra $A$. In particular, we prove that ``$e$'' is a covariant functor from $\mathbf{DGPA}$ to $\mathbf{DGA}$ and $(\mathbb{S}L)^e\cong \mathbf{U}(L\rtimes L)$, where $L$ is a differential graded Lie algebra, $\mathbb{S}L$ is the graded symmetric algebra of $L$ and $\mathbf{U}(L\rtimes L)$ is the universal enveloping algebra of $L\rtimes L$, the semidirect product of $L$.
\begin{defn}\label{univ}
Let $(A,\{,\},d)\in\mathbf{DGPA}$ and $(A^{ue},\partial)\in\mathbf{DGA}$. We call $(A^{ue},\partial)$ is a {\it universal enveloping algebra} of $A$ if there exist a differential $\mathbb{Z}$-graded algebra map $\alpha: (A,d)\to (A^{ue},\partial)$ and a differential $\mathbb{Z}$-graded Lie algebra map $\beta:(A,\{,\},d)\to (A^{ue},[,],\partial)$ satisfying
\begin{align*}
\alpha(\{a,b\})=\beta(a)\alpha(b)-(-1)^{|a||b|}\alpha(b)\beta(a),\\
\beta(ab)=\alpha(a)\beta(b)+(-1)^{|a||b|}\alpha(b)\beta(a),
\end{align*}
for any homogeneous elements $a,b\in A$, such that for any $(D,\delta)\in\mathbf{DGA}$ with a differential $\mathbb{Z}$-graded algebra map $f: (A,d)\to (D,\delta)$ and a differential $\mathbb{Z}$-graded Lie algebra map $g:(A,\{,\},d)\to(D,[,],\delta)$ satisfying
\begin{align*}
f(\{a,b\})=g(a)f(b)-(-1)^{|a||b|}f(b)g(a),\\
g(ab)=f(a)g(b)+(-1)^{|a||b|}f(b)g(a),
\end{align*}
for any homogeneous elements $a,b\in A$, then there exists a unique differential $\mathbb{Z}$-graded algebra map $\phi: (A^{ue},\partial)\to(D,\delta)$, making this diagram
\[
\xymatrix{
(A,\{,\},d)\ar[rr]^-{\alpha}_-{\beta}\ar[dr]^-{f}_-{g} && (A^{ue},\partial)\ar@{-->}[dl]^-{\exists ! \phi}\\
& (D,\delta)&
}
\]
``bi-commute'', i.e., $\phi\alpha=f$ and $\phi\beta=g$.
\end{defn}

\begin{remark}\label{u}
From the Definition \ref{univ}, one can see that $A^{ue}$ admits certain ``universal property''. But note that it is different from the classical universal property---there does not exist an obvious adjoint functor. However, maybe one can understand the ``universal property'' of $A^{ue}$ by the following two observations:
\begin{itemize}
  \item Let $\mathbf{DGLA}$ denotes the category of differential graded Lie algebras and $D_L\in\mathbf{DGLA}$ with the ``standard graded Lie bracket'': $[x,y]=xy-(-1)^{|x||y|}yx$ for any homogeneous elements $x,y\in D$. We define $\widetilde{\Hom}(A,D)$ as a subset of $\Hom_{\mathbf{DGA}}(A,D)\times\Hom_{\mathbf{DGLA}}(A,D_L)$ with the property:  For each $\theta\in \widetilde{\Hom}(A,D)$, then $\theta=(f,g)$ such that $f\in\Hom_{\mathbf{DGA}}(A,D)$ and $g\in\Hom_{\mathbf{DGLA}}(A,D_L)$ such that
\begin{align*}
f(\{a,b\})=g(a)f(b)-(-1)^{|a||b|}f(b)g(a),\\
g(ab)=f(a)g(b)+(-1)^{|a||b|}f(b)g(a),
\end{align*}
for any homogeneous elements $a,b\in A$. Then we have
\[\Hom_{\mathbf{DGA}}(A^{ue}, D)\cong\widetilde{\Hom}(A,D).\]
\item For any given $P\in\mathbf{DGPA}$, we can define a category $\mathcal{P}$ as follows: The objects of $\mathcal{P}$ are a class of triples $(A,\alpha,\beta)$, where $A\in\mathbf{DGA}$, $\alpha: P\to A$ a differential graded algebra map and $\beta: P\to A$ a differential graded Lie algebra map such that
\begin{align*}
\alpha(\{a,b\})=\beta(a)\alpha(b)-(-1)^{|a||b|}\alpha(b)\beta(a),\\
\beta(ab)=\alpha(a)\beta(b)+(-1)^{|a||b|}\alpha(b)\beta(a),
\end{align*}
for any homogeneous elements $a,b\in A$. For any $(A,\alpha,\beta), (B,f,g)\in\mathcal{P}$, then the morphisms from $(A,\alpha,\beta)$ to $(B,f,g)$ are the differential graded algebra maps $\phi: A\to B$ such that the diagram
\[
\xymatrix{
P\ar[rr]^-{\alpha}_-{\beta}\ar[dr]^-{f}_-{g} && A\ar[dl]^-{\phi}\\
& B&
}
\]
``bi-commutes'', i.e., $\phi\alpha=f$ and $\phi\beta=g$. Then the universal enveloping algebra of $P$ is the initial object of the category $\mathcal{P}$.
\end{itemize}
\end{remark}

\begin{prop}\label{unique}
Let $(A,\{,\},d)\in\mathbf{DGPA}$ and $(A^{ue},\partial)$ be the universal enveloping algebra of $A$. Then $(A^{ue},\partial)$ is unique up to isomorphisms.
\end{prop}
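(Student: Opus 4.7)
The plan is to run the standard ``universal objects are unique up to unique isomorphism'' argument, adapted to the bi-commuting diagram in Definition \ref{univ}. Specifically, suppose $(A^{ue}_1,\partial_1)$ with structure maps $\alpha_1,\beta_1$ and $(A^{ue}_2,\partial_2)$ with structure maps $\alpha_2,\beta_2$ are two universal enveloping algebras of $(A,\{,\},d)$. I want to produce mutually inverse differential graded algebra maps between them.

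First I would apply the universal property of $(A^{ue}_1,\partial_1)$ to the differential graded algebra $(D,\delta):=(A^{ue}_2,\partial_2)$ equipped with $f:=\alpha_2$ and $g:=\beta_2$. Since $(\alpha_2,\beta_2)$ satisfies the two compatibility identities in Definition \ref{univ} (by assumption that $A^{ue}_2$ is a universal enveloping algebra of $A$), this produces a unique differential graded algebra map $\phi:A^{ue}_1\to A^{ue}_2$ with $\phi\alpha_1=\alpha_2$ and $\phi\beta_1=\beta_2$. Swapping the roles of the two candidates, the universal property of $(A^{ue}_2,\partial_2)$ applied to $(D,\delta):=(A^{ue}_1,\partial_1)$ with $f:=\alpha_1,g:=\beta_1$ gives a unique differential graded algebra map $\psi:A^{ue}_2\to A^{ue}_1$ with $\psi\alpha_2=\alpha_1$ and $\psi\beta_2=\beta_1$.

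Next I would check that the composites are identities. The composite $\psi\phi:A^{ue}_1\to A^{ue}_1$ satisfies $(\psi\phi)\alpha_1=\psi\alpha_2=\alpha_1$ and $(\psi\phi)\beta_1=\psi\beta_2=\beta_1$. But the identity map $1_{A^{ue}_1}$ also bi-commutes with $(\alpha_1,\beta_1)$. Applying the uniqueness clause of the universal property of $A^{ue}_1$ to the test object $(D,\delta):=(A^{ue}_1,\partial_1)$ with $f:=\alpha_1,g:=\beta_1$ forces $\psi\phi=1_{A^{ue}_1}$. The symmetric argument gives $\phi\psi=1_{A^{ue}_2}$, so $\phi$ is an isomorphism of differential graded algebras intertwining the structure maps.

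There is no real obstacle here: the entire proof is a diagram chase driven by the ``bi-commute'' clause, and the two compatibility relations between $\alpha_i$ and $\beta_i$ are used only to ensure that each universal property can be invoked on the other candidate. The only thing to be careful about is that the uniqueness in Definition \ref{univ} is for the induced map $\phi$, so the ``identity'' step must be phrased as comparing two differential graded algebra maps $A^{ue}_1\to A^{ue}_1$ both of which bi-commute with $(\alpha_1,\beta_1)$; once this is noted, the conclusion is immediate.
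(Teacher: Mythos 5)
Your proposal is correct and is essentially the same argument as the paper's own proof: both directions of the map come from invoking each universal property on the other candidate, and the composites are identified with the identity maps by the uniqueness clause applied to the bi-commuting diagrams. No gaps; nothing further is needed.
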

\begin{proof}
Let $(\widehat{A}^{ue},\widehat{\partial})$ be another universal enveloping algebra of $A$. Then there exists a differential graded algebra map $\widehat{\alpha}: (A,d)\to (\widehat{A}^{ue},\widehat{\partial})$ and a differential graded Lie algebra map $\widehat{\beta}: (A\{,\},d)\to (\widehat{A}^{ue}, [,], \widehat{\partial})$ such that
\begin{align*}
\widehat{\alpha}_{\{a,b\}}=\widehat{\beta}_a\widehat{\alpha}_b-(-1)^{|a||b|}\widehat{\alpha}_b\widehat{\beta}_a,\\
\widehat{\beta}_{ab}=\widehat{\alpha}_a\widehat{\beta}_b+(-1)^{|a||b|}\widehat{\alpha}_b\widehat{\beta}_a,
\end{align*}
for any homogeneous elements $a,b\in A$.
Note that both $A^{ue}$ and $\widehat{A}^{ue}$ are universal enveloping algebras of $A$, by Definition \ref{univ}, there exist unique differential graded algebra maps $\phi: (A^{ue}, \partial)\to (\widehat{A}^{ue}, \widehat{\partial})$ and $\widehat{\phi}: (\widehat{A}^{ue}, \widehat{\partial})\to (A^{ue}, \partial)$, such that the following two diagrams

$$
\xymatrix{
(A,\{,\},d)\ar[rr]^-{\alpha}_-{\beta}\ar[dr]^-{\widehat{\alpha}}_-{\widehat{\beta}} && (A^{ue},\partial)\ar@{-->}[dl]^-{\exists ! \phi}\\
& (\widehat{A}^{ue},\widehat{\partial})&
},
\xymatrix{
(A,\{,\},d)\ar[rr]^-{\widehat{\alpha}}_-{\widehat{\beta}}\ar[dr]^-{\alpha}_-{\beta} && (\widehat{A}^{ue},\widehat{\partial})\ar@{-->}[dl]^-{\exists ! \widehat{\phi}}\\
& (A^{ue},\partial)&
},$$
``bi-commute''. In particular, we have the following two ``bi-commutative'' diagrams
$$
\xymatrix{
(A,\{,\},d)\ar[rr]^-{\alpha}_-{\beta}\ar[dr]^-{\alpha}_-{\beta} && (A^{ue},\partial)\ar@{-->}[dl]^-{\exists  \widehat{\phi}\phi\;{\rm and}\; 1_{A^{ue}}}\\
& (A^{ue},\partial)&
},
\xymatrix{
(A,\{,\},d)\ar[rr]^-{\widehat{\alpha}}_-{\widehat{\beta}}\ar[dr]^-{\widehat{\alpha}}_-{\widehat{\beta}} && (\widehat{A}^{ue},\widehat{\partial})\ar@{-->}[dl]^-{\exists  \phi\widehat{\phi}\;{\rm and}\; 1_{\widehat{A}^{ue}}}\\
& (\widehat{A}^{ue},\widehat{\partial})&
}.$$
By the uniqueness of the differential graded algebra maps in the above two bi-commutative diagrams, we have $\widehat{\phi}\phi=1_{A^{ue}}$ and $\phi\widehat{\phi}=1_{\widehat{A}^{ue}}$, as required.

\end{proof}

\begin{theorem}\label{universalproperty}
Let $(A,\{,\},d)\in\mathbf{DGPA}$. Then $A^e$ is the universal enveloping algebra of $A$, which is unique up to isomorphisms.
\end{theorem}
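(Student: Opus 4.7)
The strategy is to verify that the canonical maps $m\colon A\to A^e$ and $h\colon A\to A^e$ built into the construction of $A^e$ satisfy the hypotheses imposed on $\alpha,\beta$ in Definition \ref{univ}, and then to produce the universal $\phi$ by invoking the fact that $A^e$ is generated by $m_A\cup h_A$. Uniqueness of $A^e$ is already supplied by Proposition \ref{unique}, so the only content is the existence and uniqueness of $\phi$ in the diagram.

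First, I would simply read off the structural properties of $m$ and $h$ from the defining relations of $A^e$. The relations $m_{ab}=m_am_b$, $m_1=1$, together with $\partial(m_a)=m_{d(a)}$ from Lemma \ref{lem1}, say that $m$ is a differential graded algebra map. The relation $h_{\{a,b\}}=h_ah_b-(-1)^{|a||b|}h_bh_a$ together with $\partial(h_a)=h_{d(a)}$ says that $h$ is a differential graded Lie algebra map into $(A^e,[\,,\,],\partial)$ with the graded commutator bracket. The two remaining defining relations
\[
m_{\{a,b\}}=h_am_b-(-1)^{|a||b|}m_bh_a,\qquad h_{ab}=m_ah_b+(-1)^{|a||b|}m_bh_a,
\]
are exactly the two compatibility identities demanded of $(\alpha,\beta)$ in Definition \ref{univ}. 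Hence the triple $(A^e,m,h)$ is a legitimate candidate.

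Next, for a given datum $(D,\delta,f,g)$ satisfying the same hypotheses, I would define $\phi$ on generators by $\phi(m_a):=f(a)$ and $\phi(h_a):=g(a)$ and extend multiplicatively from the free graded algebra on $m_A\cup h_A$. The main verification is that $\phi$ descends to the quotient $A^e$. Each of the five defining relations translates into an identity that is part of the hypotheses on $(f,g)$: $m_{ab}=m_am_b$ becomes the multiplicativity of $f$; $m_1=1$ becomes unitality of $f$; $h_{\{a,b\}}=h_ah_b-(-1)^{|a||b|}h_bh_a$ becomes the Lie algebra map condition $g(\{a,b\})=[g(a),g(b)]$; and the two mixed relations become precisely the two compatibility equations assumed on $(f,g)$. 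Compatibility with differentials is then checked on generators, $\phi\partial(m_a)=\phi(m_{d(a)})=f(d(a))=\delta f(a)=\delta\phi(m_a)$ and analogously for $h_a$, and propagates to all of $A^e$ via the graded Leibniz rule satisfied by $\partial$ and $\delta$.

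Uniqueness of $\phi$ is automatic: any differential graded algebra map out of $A^e$ is determined by its values on the generators $m_A\cup h_A$, and the bi-commutativity $\phi m=f$, $\phi h=g$ forces $\phi(m_a)=f(a)$, $\phi(h_a)=g(a)$. Combined with Proposition \ref{unique}, this proves that $A^e$ is the universal enveloping algebra of $A$, unique up to isomorphism. I anticipate no real obstacle in this argument; the only place that requires care is sign bookkeeping when matching each defining relation of $A^e$ to its image in $D$, in particular observing that the order of factors in the compatibility identities of Definition \ref{univ} matches the order chosen in the defining relations of $A^e$.
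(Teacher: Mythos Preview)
Your proposal is correct and follows essentially the same approach as the paper: invoke Proposition \ref{unique} for uniqueness, read off that $(m,h)$ satisfy the axioms of Definition \ref{univ} directly from the defining relations of $A^e$ and Lemma \ref{lem1}, define $\phi$ on generators by $\phi(m_a)=f(a)$, $\phi(h_a)=g(a)$, and verify well-definedness by matching each relation of $A^e$ with the corresponding hypothesis on $(f,g)$. The paper carries out exactly these verifications explicitly; your outline is an accurate summary of them.
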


\begin{proof}
By Proposition \ref{unique}, it suffices to prove that $A^e$ is the universal enveloping algebra of $A$. From the construction of $A^e$, there are two $\Bbbk$-linear maps $m: A\to A^e$ sending each element $a\in A$ to $m_a$ and $h: A\to A^e$ sending each element $a\in A$ to $h_a$. In fact, $m$ is the required differential graded algebra map and $h$ is the required differential graded Lie algebra map since we have
\begin{align*}
m_{ab}=m_am_b,\\
h_{ab}=m_ah_b+(-1)^{|a||b|}m_bh_a,\\
m_{\{a,b\}}=h_am_b-(-1)^{|a||b|}m_bh_a,\\
h_{\{a,b\}}=h_ah_b-(-1)^{|a||b|}h_bh_a,\\
m_1=1,\\
\partial(m_a)=m_{d(a)},\\
\partial(h_a)=h_{d(a)},
\end{align*}
for any homogeneous elements $a,b\in A$.

Now let $(D,\delta)\in\mathbf{DGA}$ with a differential $\mathbb{Z}$-graded algebra map $f: (A,d)\to (D,\delta)$ and a differential $\mathbb{Z}$-graded Lie algebra map $g:(A,\{,\},d)\to(D,[,],\delta)$ satisfying
\begin{align*}
f(\{a,b\})=g(a)f(b)-(-1)^{|a||b|}f(b)g(a),\\
g(ab)=f(a)g(b)+(-1)^{|a||b|}f(b)g(a),
\end{align*}
for any homogeneous elements $a,b\in A$. Define a $\Bbbk$-linear map $\phi: (A^e,\partial)\to(D,\delta)$ by
$$\phi(m_a):=f(a)\;\;\;{\rm and}\;\;\;\phi(h_a):=g(a).$$
We claim that $\phi: A^e\to D$ is a $\mathbb{Z}$-graded algebra map. In fact, it suffices to check
\begin{align*}
\phi(m_{ab})=\phi(m_am_b),\\
\phi(h_{ab})=\phi(m_ah_b+(-1)^{|a||b|}m_bh_a),\\
\phi(m_{\{a,b\}})=\phi(h_am_b-(-1)^{|a||b|}m_bh_a),\\
\phi(h_{\{a,b\}})=\phi(h_ah_b-(-1)^{|a||b|}h_bh_a),
\end{align*}for any homogeneous elements $a,b\in A$,
which are true from
\begin{eqnarray*}
\phi(m_{ab})=f(ab)=f(a)f(b)=\phi(m_a)\phi(m_b),
\end{eqnarray*}
\begin{eqnarray*}
\phi(h_{ab})&=&g(ab)\\
&=&f(a)g(b)+(-1)^{|a||b|}f(b)g(a)\\
&=&\phi(m_a)\phi(h_b)+(-1)^{|a||b|}\phi(m_b)\phi(h_a)\\
&=&\phi(m_ah_b+(-1)^{|a||b|}m_bh_a),
\end{eqnarray*}
\begin{eqnarray*}
\phi(m_{\{a,b\}})&=&f({\{a,b\}})\\
&=&g(a)f(b)-(-1)^{|a||b|}f(b)g(a)\\
&=&\phi(h_a)\phi(m_b)-(-1)^{|a||b|}\phi(m_b)\phi(h_a)\\
&=&\phi(h_am_b-(-1)^{|a||b|}m_bh_a)
\end{eqnarray*}
and
\begin{eqnarray*}
\phi(h_{\{a,b\}})&=&g(\{a,b\})\\
&=&[g(a),g(b)]\\
&=&g(a)g(b)-(-1)^{|a||b|}g(b)g(a)\\
&=&\phi(h_a)\phi(h_b)-(-1)^{|a||b|}\phi(h_b)\phi(h_a)\\
&=&\phi(h_ah_b-(-1)^{|a||b|}h_bh_a).
\end{eqnarray*}

Further, $\phi$ is unique such that $\phi m=f$ and $\phi h=g$ by its construction and it is
also a differential graded algebra map since for any homogeneous element $a\in A$, we have
$$\phi(\partial(m_a))=\phi(m_{d(a)})=f(d(a))=\delta(f(a))=\delta(\phi(m_a))$$and
$$\phi(\partial(h_a))=\phi(h_{d(a)})=g(d(a))=\delta(g(a))=\delta(\phi(h_a)).$$
\end{proof}
\begin{remark}
From now on, we let $A^e$ denote the universal enveloping algebra of a differential graded Poisson algebra $A$ since Theorem \ref{universalproperty}.
\end{remark}

\begin{corollary}\label{e}
$e: \mathbf{ DGPA}\to \mathbf{ DGA}$ is a covariant functor.
\end{corollary}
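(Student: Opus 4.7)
The plan is to define the functor $e$ on objects as $A \mapsto A^e$ (already given by the construction in Subsection 3.1.1), and to define $e$ on morphisms by invoking the universal property established in Theorem \ref{universalproperty}. Specifically, given a morphism $\phi:A\to B$ in $\mathbf{DGPA}$, I would consider the two compositions
\[
f := m_B\circ \phi : A \to B^e \quad\text{and}\quad g := h_B\circ \phi : A \to B^e.
\]
Here $m_B:B\to B^e$ is a differential graded algebra map and $h_B:B\to B^e$ is a differential graded Lie algebra map, while $\phi$ is both a differential graded algebra map and a differential graded Lie algebra map (by Definition \ref{poissonmap}). Thus $f$ is a differential graded algebra map and $g$ is a differential graded Lie algebra map.

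Next I would check that $f,g$ satisfy the two compatibility identities required in Theorem \ref{universalproperty}, namely
\begin{align*}
f(\{a,b\}) &= g(a)f(b)-(-1)^{|a||b|}f(b)g(a),\\
g(ab) &= f(a)g(b)+(-1)^{|a||b|}f(b)g(a).
\end{align*}
Both are immediate: applying $\phi$ and using that it preserves brackets and products reduces each identity to the corresponding defining relation in $B^e$ applied to the elements $\phi(a),\phi(b)\in B$. By the universal property, this produces a unique differential graded algebra map $\phi^e:A^e\to B^e$ with $\phi^e m_A = m_B\phi$ and $\phi^e h_A = h_B\phi$; I set $e(\phi):=\phi^e$.

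Finally, I would verify the two functoriality axioms via the uniqueness clause of the universal property. For the identity, both $1_{A^e}$ and the constructed map $(1_A)^e$ make the diagram
\[
\xymatrix{
A\ar[rr]^-{m_A}_-{h_A}\ar[dr]^-{m_A}_-{h_A} && A^e\ar@{-->}[dl]\\
& A^e &
}
\]
bi-commute, so uniqueness forces $(1_A)^e=1_{A^e}$. For composition, given $A\xrightarrow{\phi}B\xrightarrow{\psi}C$ in $\mathbf{DGPA}$, both $\psi^e\phi^e$ and $(\psi\phi)^e$ satisfy $\theta m_A = m_C\psi\phi$ and $\theta h_A = h_C\psi\phi$, so uniqueness again yields $(\psi\phi)^e=\psi^e\phi^e$.

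There is no real obstacle here; the content of the corollary is essentially an unpacking of the universal property. The only thing that requires a small routine verification is the pair of compatibility identities for $f=m_B\phi$ and $g=h_B\phi$, and even those reduce on the nose to the defining relations of $B^e$ once $\phi$ is moved inside. The substance of the proof thus lies entirely in Theorem \ref{universalproperty}, and Corollary \ref{e} amounts to recording that that theorem is sufficiently ``natural'' in $A$.
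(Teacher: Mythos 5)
Your proposal is correct and follows essentially the same route as the paper: define $e$ on objects via the construction of $A^e$, on a morphism $\phi:A\to B$ by applying the universal property of $A^e$ to the pair $m^B\phi$ (a differential graded algebra map) and $h^B\phi$ (a differential graded Lie algebra map), whose compatibility identities reduce to the defining relations of $B^e$ at $\phi(a),\phi(b)$, and then deduce $e(1_A)=1_{A^e}$ and $e(\psi\phi)=e(\psi)e(\phi)$ from the uniqueness clause. The only cosmetic difference is that the paper treats the identity axiom as immediate while you also derive it from uniqueness, which is fine.
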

\begin{proof}
For any $A\in \mathbf{ DGPA}$, define $e(A):=A^e$. By Lemma \ref{lem1}, $A^e\in\mathbf{ DGA}$. For any differential graded Poisson algebra map $f:A\to B$ in $\mathbf{ DGPA}$, note that $m^Bf:A\to B^e$ is a differential graded algebra map and $h^Bf: A\to B^e$ is a differential graded Lie algebra map satisfying
\begin{align*}
m^Bf(\{a,b\})=h^Bf(a)m^Bf(b)-(-1)^{|a||b|}m^Bf(b)h^Bf(a),\\
h^Bf(ab)=m^Bf(a)h^Bf(b)+(-1)^{|a||b|}m^Bf(b)h^Bf(a).
\end{align*}
Now by Theorem \ref{universalproperty}, $A^e$ is the universal enveloping algebra of $A$, then there exists a unique differential graded algebra map $f^e: A^e\to B^e$ making the following digram
\[
\xymatrix{
A\ar[rr]^-{m^A}_-{h^A}\ar[d]^-{f} && A^e\ar@{-->}[d]^-{\exists !  f^e}\\
 B\ar[rr]^-{m^B}_-{h^B}&&B^e}\]
``bi-commute''. Then we define $e(f):=f^e$. Further, it is clear that $e(1_A)=1_{A^e}$ for any $A\in\mathbf{ DGPA}$.

Now for any differential graded Poisson algebra maps $f: A\to B$ and $g:B\to C$, similarly, using the fact that $A^e$ and $B^e$ are the universal enveloping algebras of $A$ and $B$, respectively, we have the following ``bi-commutative'' diagram
\[
\xymatrix{
A\ar[rr]^-{m^A}_-{h^A}\ar[d]^-{f} && A^e\ar@{-->}[d]^-{\exists !  f^e}\\
 B\ar[rr]^-{m^B}_-{h^B}\ar[d]^-{g}&&B^e\ar@{-->}[d]^-{\exists !  g^e}\\
 C\ar[rr]^-{m^C}_-{h^C}&&C^e,}\]which implies that
 $$e(gf)=g^ef^e=e(g)e(f).$$
Therefore, $e: \mathbf{ DGPA}\to \mathbf{ DGA}$ is a covariant functor.
\end{proof}

\begin{remark}\label{rem1}
There is a natural question: For a given differential graded Poisson algebra $A$ in terms of generators and relations, how to compute $A^e$ explicitly.
In fact, let $V$ be a $\mathbb{Z}$-graded $\Bbbk$-vector space with a homogeneous $\Bbbk$-basis $\{x_{\alpha}: \alpha\in\Lambda\}$ and
\[R=\frac{T(V)}{(x_{\alpha}\otimes x_{\beta}-(-1)^{|x_{\alpha}||x_{\beta}|}x_{\beta}\otimes x_{\alpha}\;\big|\;\forall\alpha,\beta\in\Lambda)}.\]
Now suppose that $(R,d,\{,\})$ is a differential graded Poisson algebra and $I$ is a differential graded Poisson ideal of $R$. Put $A:=R/I.$ Then by Proposition \ref{2.7}, $A$ has a natural differential graded Poisson algebra structure induced from $R$.

Now for any $\alpha\in\Lambda$, we define a $\Bbbk$-linear map
\[\psi_{\alpha}:R\to R\]
by
\[
\psi_{\alpha}(x_{\beta})=\delta_{\alpha\beta}\;{\rm and}\;\psi_{\alpha}(ab)=a\psi_{\alpha}(b)+(-1)^{|a||b|}b\psi_{\alpha}(a)\]
for all $\alpha,\beta\in\Lambda$ and for all homogeneous elements $a,b\in R$. Note that these $\Bbbk$-linear maps $\{\psi_{\alpha}\}_{\alpha\in\Lambda}$ are well-defined since they preserve the relations of $R$.

Let $\{y_{\alpha}|\alpha\in\Lambda\}$ be another copy of the homogeneous basis of $V$ such that $|x_{\alpha}|=|y_{\alpha}|$ for any $\alpha\in\Lambda$. Let $F(R)=R\langle y_\alpha|\alpha\in\Lambda\rangle $ be the free $R$-algebra generated by $\{y_{\alpha}|\alpha\in\Lambda\}$. Now follow the idea of \cite{OPS} and by the same argument with some extra signs, we define a degree 0 $\Bbbk$-linear map $\Psi: R\to F(R)$ such that
$$\Psi(f):=\sum_{\alpha\in\Lambda}\psi_{\alpha}(f)y_{\alpha},$$ for any $f\in R$. Note that such $\Psi$ is well-defined since for any $f\in R$, there are only finite many $\alpha\in\Lambda$ such that $\psi_{\alpha}(f)$ is not zero.
Then we get a differential graded algebra
$${\small \mathcal{A}=\frac{F(R)}{(I,\Psi(I),y_{\alpha}x_{\beta}-(-1)^{|x_{\alpha}||x_{\beta}|}x_{\beta}y_{\alpha}-\{x_{\alpha},x_{\beta}\},y_{\alpha}y_{\beta}-(-1)^{|x_{\alpha}||x_{\beta}|}y_{\beta}y_{\alpha}-\Psi(\{x_{\alpha},x_{\beta}\})\;\Big|\;\alpha,\beta\in\Lambda)}},$$
where the differential $\partial$ of $\mathcal{A}$ is given by
\[\partial(x_{\alpha}):=d(x_{\alpha}),\;\;\;\\ \partial(y_{\alpha}):=\overline{\Psi(d(x_{\alpha}))}
\] and the graded Leibniz rule.  Note that there are two $\Bbbk$-linear maps $m,h: A\to \mathcal{A}$ given by
\[
m(f)=f,\; h(f)=\overline{\Psi(f)},
\]
for any $f\in A$. In fact, $(\mathcal{A},m,h)$ is the universal enveloping algebra of $A$. Note also that, if $I=0$, then $A^e$ has a PBW-basis. Concerning the length of the paper and the complicated computation, we will treat this in \cite{LWZ5}.
\end{remark}

To understand Remark \ref{rem1}, we consider the following example.
\begin{exa}
{\rm Let $A$ be the same differential graded Poisson algebra as defined in Example \ref{con}. Then by Remark \ref{rem1}, we have that $A^e$ is generated by $\{x_1,x_2,y_1,y_2\}$ subject to the following relations
\begin{align*}
\label{}
x_1^2, y_1^2, \\
[x_1,x_2], [x_1,y_1], [x_2,y_2], \\
[y_1,y_2]-p(x_2y_1+x_1y_2),
[y_1,x_2]-px_1x_2,
[y_2,x_1]+px_1x_2,
\end{align*}
and the differential $\partial$ is given by
\[\partial(x_1)=\lambda x_2, \partial(x_2)=\mu x_1x_2, \partial(y_1)=\lambda y_2, \partial(y_2)=\mu(x_2y_1+x_1y_2).\]

}
\end{exa}

Note that we already have the other two notions of ``universal enveloping algebra''---$R^{\mathcal{E}}:=R\otimes R^{op}$ for any ring $R$ and $\mathbf{ U}(L)$, the universal enveloping algebra of Lie algebra $L$. Concerning this, one can ask the following natural question:
\begin{itemize}
  \item How do those constructions interact with each other?
\end{itemize}
We will study the relation between $A^e$ and $\mathbf{U}(A)$ in the rest of this subsection for the case of $A=\mathbb{S}L$, the graded symmetric algebra of a differential graded Lie algebra $L$ (see Theorem \ref{semi}), and we will discuss the relationship between $A^e$ and $A^{\mathcal{E}}$ in the next subsection (see Corollary \ref{coop}).

\begin{prop}\label{sem}
Let $(L,[,]_L, d)$ be a differential graded Lie algebra. Then the graded semidirect product $L\rtimes L$ of $L$ is a differential graded Lie algebra with the bracket $\{,\}$ and differential $\partial$ defined by
\begin{align*}
\{(x_1,x_2),(y_1,y_2)\}:=([x_2,y_1]_L-(-1)^{|x||y|}[y_2,x_1]_L,[x_2,y_2]_L),\\
\partial(x_1,x_2):=(d(x_1),d(x_2)),
\end{align*}
where $(x_1,x_2)$ and $(y_1,y_2)$ are homogeneous elements of $L\rtimes L$ and $|x|:=|(x_1,x_2)|=|x_1|=|x_2|$, $|y|:=|(y_1,y_2)|=|y_1|=|y_2|$.
\end{prop}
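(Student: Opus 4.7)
The plan is to verify directly that the structure $(L\rtimes L,\{,\},\partial)$ satisfies each of the three axioms in Definition \ref{gla}: graded antisymmetry, graded Jacobi identity, and the graded Leibniz rule for the bracket (together with $\partial^2=0$). Since both the bracket and the differential are built componentwise from $[,]_L$ and $d$, each axiom should reduce to the corresponding axiom for the differential graded Lie algebra $(L,[,]_L,d)$, and the main work is just keeping track of the Koszul signs.

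For graded antisymmetry, I would observe that the second component of $\{(x_1,x_2),(y_1,y_2)\}$ is $[x_2,y_2]_L$, which is already graded antisymmetric in $L$. For the first component $[x_2,y_1]_L-(-1)^{|x||y|}[y_2,x_1]_L$, swapping $x$ and $y$ and multiplying by $-(-1)^{|x||y|}$ gives $-(-1)^{|x||y|}[y_2,x_1]_L+[x_2,y_1]_L$, which is the same expression; the structure of the bracket has been chosen exactly so that the cross terms cancel appropriately without requiring antisymmetry of the individual terms.

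The graded Jacobi identity is the main obstacle, since the first component of a double bracket mixes the $L$-bracket applied in two slots in a non-obvious pattern. I would expand $\{(x_1,x_2),\{(y_1,y_2),(z_1,z_2)\}\}$, $\{\{(x_1,x_2),(y_1,y_2)\},(z_1,z_2)\}$, and $(-1)^{|x||y|}\{(y_1,y_2),\{(x_1,x_2),(z_1,z_2)\}\}$ in full. In the second component only terms of the form $[x_2,[y_2,z_2]_L]_L$ appear, so the identity here is just the Jacobi identity for $L$. In the first component the terms naturally group into two families: those of the form $[x_2,[y_2,z_1]_L]_L$ (and its permutations in $y,z$) and those of the form $[x_2,[y_2,z_1]_L]_L$ with one of the $z_1$'s replaced by $x_1$ or $y_1$. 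Each family is handled by applying the graded Jacobi identity of $L$ once, with the Koszul signs matching because the degrees $|x|,|y|,|z|$ equal $|x_2|,|y_2|,|z_2|$ in the brackets and also $|x_1|,|y_1|,|z_1|$ in the crossed terms.

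Finally, $\partial^2=0$ is immediate from $d^2=0$ applied componentwise. The graded Leibniz rule $\partial\{(x_1,x_2),(y_1,y_2)\}=\{\partial(x_1,x_2),(y_1,y_2)\}+(-1)^{|x|}\{(x_1,x_2),\partial(y_1,y_2)\}$ reduces, in each component and for each summand, to the graded Leibniz rule (iii) of Definition \ref{gla} for $(L,[,]_L,d)$; the sign $(-1)^{|x|}$ passes through since $|x|=|x_2|$ and the bracket in $L$ satisfies exactly this rule. So the entire proof is a routine componentwise expansion, with the only subtlety being the sign bookkeeping in the Jacobi identity for the first component.
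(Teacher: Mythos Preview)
Your proposal is correct and follows essentially the same approach as the paper: a direct componentwise verification of the three axioms (graded antisymmetry, graded Jacobi, and graded Leibniz together with $\partial^2=0$), reducing each to the corresponding property of $(L,[,]_L,d)$. The paper carries out exactly this expansion, with the Jacobi identity handled by writing out all three double brackets in full and then applying the Jacobi identity in $L$ separately to the second component and to each of the three groups of terms (involving $x_1$, $y_1$, $z_1$ respectively) in the first component; your ``two families'' should really be three in the first slot, but this is only a minor imprecision in your sketch.
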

\begin{proof}
Let $(x_1,x_2), (y_1,y_2), (z_1,z_2)\in L\rtimes L$ be any homogeneous elements.

The ``graded antisymmetry'' property follows from
\begin{eqnarray*}
\{(x_1,x_2),(y_1,y_2)\}&=&([x_2,y_1]_L-(-1)^{|x||y|}[y_2,x_1]_L,[x_2,y_2]_L)\\
&=&-(-1)^{|x||y|}([y_2,x_1]_L-(-1)^{|x||y|}[x_2,y_1]_L,[y_2,x_2]_L)\\
&=&-(-1)^{|x||y|}\{(y_1,y_2),(x_1,x_2)\}.
\end{eqnarray*}

The ``graded Leibniz rule for $\{,\}$'' follows from
\begin{eqnarray*}
&&\partial(\{(x_1,x_2),(y_1,y_2)\})\\
&=&\partial([x_2,y_1]_L-(-1)^{|x||y|}[y_2,x_1]_L,[x_2,y_2]_L)\\
&=&([dx_2,y_1]_L+(-1)^{|x|}[x_2,dy_1]_L-(-1)^{|x||y|}[dy_2,x_1]_L-(-1)^{|y|(|x|+1)}[y_2,dx_1]_L, [dx_2,y_2]_L+(-1)^{|x|}[x_2,dy_2]_L)
\end{eqnarray*}
and
\begin{eqnarray*}
&&\{\partial(x_1,x_2),(y_1,y_2)\}+(-1)^{|x|}\{(x_1,x_2),\partial(y_1,y_2)\}\\
&=&\{(dx_1,dx_2),(y_1,y_2)\}+(-1)^{|x|}\{(x_1,x_2),(dy_1,dy_2)\}\\
&=&([dx_2,y_1]_L-(-1)^{|y|(|x|+1)}[y_2,dx_1]_L,[dx_2,y_2]_L)+(-1)^{|x|}([x_2,dy_1]_L-(-1)^{|x|(|y|+1)}[dy_2,x_1]_L,[x_2,dy_2]_L)\\
&=&([dx_2,y_1]_L+(-1)^{|x|}[x_2,dy_1]_L-(-1)^{|x||y|}[dy_2,x_1]_L-(-1)^{|y|(|x|+1)}[y_2,dx_1]_L, [dx_2,y_2]_L+(-1)^{|x|}[x_2,dy_2]_L).
\end{eqnarray*}

For the ``graded Jacobi identity''  for $\{,\}$, first we have
\begin{eqnarray*}
&&\{(x_1,x_2),\{(y_1,y_2),(z_1,z_2)\}\}\\
&=&\{(x_1,x_2),([y_2,z_1]_L-(-1)^{|y||z|}[z_2,y_1]_L, [y_2,z_2]_L)\}\\
&=& ([x_2,[y_2,z_1]_L-(-1)^{|y||z|}[z_2,y_1]_L]_L-(-1)^{|x|(|y|+|z|)}[[y_2,z_2]_L,x_1]_L, [x_2,[y_2,z_2]_L]_L)\\
&=&([x_2,[y_2,z_1]_L]_L-(-1)^{|y||z|}[x_2,[z_2,y_1]_L]_L-(-1)^{|x|(|y|+|z|)}[[y_2,z_2]_L,x_1]_L, [x_2,[y_2,z_2]_L]_L),
\end{eqnarray*}
\begin{eqnarray*}
&&\{\{(x_1,x_2),(y_1,y_2)\},(z_1,z_2)\}\\
&=&\{([x_2, y_1]_L-(-1)^{|x||y|}[y_2,x_1]_L, [x_2,y_2]_L),(z_1,z_2)\}\\
&=&([[x_2,y_2]_L,z_1]_L-(-1)^{(|x|+|y|)|z|}[z_2,[x_2,y_1]_L-(-1)^{|x||y|}[y_2,x_1]_L]_L, [[x_2,y_2]_L,z_2]_L)\\
&=&([[x_2,y_2]_L,z_1]_L-(-1)^{(|x|+|y|)|z|}[z_2,[x_2,y_1]_L]_L+(-1)^{|x||z|+|y||z|+|x||y|}[z_2,[y_2,x_1]_L]_L, [[x_2,y_2]_L,z_2]_L)
\end{eqnarray*}
and
\begin{eqnarray*}
&&(-1)^{|x||y|}\{(y_1,y_2),\{(x_1,x_2),(z_1,z_2)\}\}\\
&=&(-1)^{|x||y|}\{(y_1,y_2),([x_2,z_1]_L-(-1)^{|x||z|}[z_2,x_1]_L,[x_2,z_2]_L)\}\\
&=&(-1)^{|x||y|}([y_2,[x_2,z_1]_L-(-1)^{|x||z|}[z_2,x_1]_L]_L-(-1)^{|y|(|x|+|z|)}[[x_2,z_2]_L,y_1]_L, [y_2,[x_2,z_2]_L]_L)\\
&=&(-1)^{|x||y|}([y_2,[x_2,z_1]_L]_L-(-1)^{|x||z|}[y_2,[z_2,x_1]_L]_L-(-1)^{|y|(|x|+|z|)}[[x_2,z_2]_L,y_1]_L, [y_2,[x_2,z_2]_L]_L)\\
&=&((-1)^{|x||y|}[y_2,[x_2,z_1]_L,]_L-(-1)^{|x|(|y|+|z|)}[y_2,[z_2,x_1]_L]_L-(-1)^{|y||z|}[[x_2,z_2]_L,y_1]_L, (-1)^{|x||y|}[y_2,[x_2,z_2]_L]_L).
\end{eqnarray*}

Recall that $L$ is a graded Lie algebra, by the ``graded Jacobi identity'' for $[,]_L$, we have
\begin{align*}
[x_2,[y_2,z_2]_L]_L=[[x_2,y_2]_L,z_2]_L+(-1)^{|x||y|}[y_2,[x_2,z_2]_L]_L,\\
[x_2,[y_2,z_1]_L]_L=[[x_2,y_2]_L,z_1]_L+(-1)^{|x||y|}[y_2,[x_2,z_1]_L,]_L,\\
-(-1)^{|y||z|}[x_2,[z_2,y_1]_L]_L=-(-1)^{(|x|+|y|)|z|}[z_2,[x_2,y_1]_L]_L-(-1)^{|y||z|}[[x_2,z_2]_L,y_1]_L,\\
-(-1)^{|x|(|y|+|z|)}[[y_2,z_2]_L,x_1]_L=(-1)^{|x||z|+|y||z|+|x||y|}[z_2,[y_2,x_1]_L]_L-(-1)^{|x|(|y|+|z|)}[y_2,[z_2,x_1]_L]_L.
\end{align*}

Therefore, we are done.
\end{proof}

\begin{theorem}\label{semi}
Let $(L,[,]_L,d)$ be a differential graded Lie algebra, and $\mathbb{S}L$ be its graded symmetric algebra. Then $\mathbb{S}L$ is a differential graded Poisson algebra and
\[(\mathbb{S}L)^e=\mathbf{U}(L\rtimes L).\]as differential graded algebras.
\end{theorem}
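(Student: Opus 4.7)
The first assertion, that $\mathbb{S}L$ is a differential graded Poisson algebra, is already Proposition \ref{p1}. For the algebra isomorphism, my plan is to construct mutually inverse differential graded algebra maps
\[
\phi\colon \mathbf{U}(L\rtimes L)\longrightarrow (\mathbb{S}L)^e, \qquad \psi\colon (\mathbb{S}L)^e\longrightarrow\mathbf{U}(L\rtimes L),
\]
by invoking the universal properties on the two sides.

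For $\phi$, I use the universal property of $\mathbf{U}$: it suffices to exhibit a differential graded Lie algebra map $\widetilde\phi\colon L\rtimes L\to ((\mathbb{S}L)^e,[,],\partial)$ (with commutator bracket). I set $\widetilde\phi(x,0):=m_x$ and $\widetilde\phi(0,x):=h_x$ for homogeneous $x\in L$. The required bracket identities reduce, using the bracket formulas in $L\rtimes L$ from Proposition \ref{sem} and the defining relations of $(\mathbb{S}L)^e$, to three checks on generators: (i) $[m_x,m_y]=0$, which holds because $m_xm_y=m_{xy}=(-1)^{|x||y|}m_{yx}=(-1)^{|x||y|}m_ym_x$ by graded commutativity of $\mathbb{S}L$; (ii) $[h_x,h_y]=h_{[x,y]_L}$, which is the relation $h_{\{a,b\}}=h_ah_b-(-1)^{|a||b|}h_bh_a$ applied to $a,b\in L$, combined with $\{x,y\}_{\mathbb{S}L}=[x,y]_L$; and (iii) $[h_x,m_y]=m_{[x,y]_L}$, which is $m_{\{a,b\}}=h_am_b-(-1)^{|a||b|}m_bh_a$. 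Differential compatibility follows from $\partial(m_x)=m_{d(x)}$ and $\partial(h_x)=h_{d(x)}$ matching $d(x,0)=(d(x),0)$ and $d(0,x)=(0,d(x))$. The universal property of $\mathbf{U}$ then yields $\phi$.

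For $\psi$, I use the universal property of $(\mathbb{S}L)^e$ (Theorem \ref{universalproperty}): I must produce a differential graded algebra map $f\colon\mathbb{S}L\to\mathbf{U}(L\rtimes L)$ and a differential graded Lie algebra map $g\colon\mathbb{S}L\to\mathbf{U}(L\rtimes L)_L$ satisfying the two compatibility identities in Definition \ref{univ}. I define $f$ by extending $x\mapsto(x,0)$ from $L$; this factors through $\mathbb{S}L$ because $\{(x,0),(y,0)\}=0$ in $L\rtimes L$ forces the images to graded commute inside $\mathbf{U}(L\rtimes L)$. I define $g$ on $L$ by $x\mapsto(0,x)$ and extend it to all of $\mathbb{S}L$ by forcing the Leibniz-type rule $g(ab):=f(a)g(b)+(-1)^{|a||b|}f(b)g(a)$ inductively on word length; a short sign check confirms this is well-defined on the graded symmetric algebra. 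The two compatibility identities hold on $L\times L$ by direct computation (for example, $\{(0,x),(y,0)\}=([x,y]_L,0)$ yields the first one), and propagate to all of $\mathbb{S}L$ by induction using the biderivation property of $\{,\}_{\mathbb{S}L}$. Differential compatibility is immediate. The universal property of $(\mathbb{S}L)^e$ then yields $\psi$.

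Finally, to check $\phi$ and $\psi$ are mutually inverse, I evaluate on generators: $\psi\phi$ fixes each $(x,0)$ and $(0,x)$, while $\phi\psi$ fixes each $m_x$ and $h_x$ for $x\in L$. Since $\{m_x,h_x\colon x\in L\}$ generates $(\mathbb{S}L)^e$ as a differential graded algebra (the relations $m_{ab}=m_am_b$ and $h_{ab}=m_ah_b+(-1)^{|a||b|}m_bh_a$ eliminate the need for $m_a,h_a$ with $a\notin L$) and $\{(x,0),(0,x)\colon x\in L\}$ generates $\mathbf{U}(L\rtimes L)$, both compositions are the identity. The main obstacle I anticipate is the well-definedness and Lie-algebra property of $g$ on $\mathbb{S}L$: carrying out the induction cleanly requires matching the biderivation expansion of $\{a,b\}_{\mathbb{S}L}$ with the iterated commutator of a product of $(y,0)$'s against an $(0,x)$ inside $\mathbf{U}(L\rtimes L)$, which is a pure sign-bookkeeping exercise rather than a conceptual difficulty.
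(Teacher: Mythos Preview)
Your proof is correct, but it follows a genuinely different route from the paper's. The paper chooses a homogeneous basis $\{x_\alpha\}$ of $L$ and invokes the explicit presentation of $(\mathbb{S}L)^e$ given in Remark~\ref{rem1} (with generators $x_\alpha,y_\alpha$ and three families of quadratic relations), then simply matches those relations against the defining relations of $\mathbf{U}(L\rtimes L)$ via $(x_\alpha,y_\beta)\mapsto x_\alpha+y_\beta$. Your argument is basis-free: you build inverse maps by firing the universal property of $\mathbf{U}$ at the Lie map $(x,0)\mapsto m_x$, $(0,x)\mapsto h_x$, and then the universal property of $(-)^e$ (Theorem~\ref{universalproperty}) at the pair $(f,g)$ where $f$ extends $x\mapsto(x,0)$ multiplicatively and $g$ is the $f$-valued derivation determined by $x\mapsto(0,x)$.

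Each approach has its price. The paper's proof is short but leans on Remark~\ref{rem1}, whose full justification is deferred to \cite{LWZ5}; your argument is self-contained within this paper but requires the inductive verification that $g$ is a well-defined differential graded Lie algebra map satisfying $f(\{a,b\})=[g(a),f(b)]$ on all of $\mathbb{S}L$, not just on $L$. You correctly identify this as the only real work; the key observation that makes the induction go through (which you might state explicitly) is that the image $f(\mathbb{S}L)$ is graded-commutative inside $\mathbf{U}(L\rtimes L)$ because $\{(x,0),(y,0)\}=0$, so $g$ behaves exactly like the map $\Psi$ of Remark~\ref{rem1} but intrinsically.
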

\begin{proof}
By Proposition \ref{p1}, $\mathbb{S}L$ is a differential graded Poisson algebra. Now let $\{x_{\alpha}\}_{\alpha\in \Lambda}$ be a basis of $L$ consisting of homogenous elements. In the graded symmetric algebra $\mathbb{S}L$, we can define ``anti-differentials'' $\psi_\alpha$ associated to every basis $x_\alpha$ such that
\begin{align*}
\psi_\alpha(ab)=a\psi_\alpha(b)+(-1)^{|a||b|}b\psi_\alpha(a),\quad \psi_\alpha(x_\beta)=\delta_{\alpha\beta},
\end{align*}
for all homogenous elements $a,b\in \mathbb{S}L$. Then we get a $\Bbbk$-linear map $\Psi$ on $\mathbb{S}L$ given by
\begin{align*}
\Psi(a)=\sum_{\alpha\in\Lambda} \psi_\alpha(a)y_\alpha,
\end{align*}
for all $a\in \mathbb{S}L$. Note that for every $a\in \mathbb{S}L$, there are only finite many $\alpha\in\Lambda$ such that $\psi_\alpha(a)\neq 0$. So $\Psi$ is well-defined on $\mathbb{S}L$.

By Remark \ref{rem1} in the case of $I=0$, we see that the universal enveloping algebra $(\mathbb{S}L)^e$ is generated by $\{x_\alpha,y_\alpha\}_{\alpha\in\Lambda}$, subject to the following relations:
\begin{itemize}
\item $x_\alpha x_\beta-(-1)^{|x_\alpha||x_\beta|}x_\beta x_\alpha=0$;
\item $y_\alpha x_\beta-(-1)^{|y_\alpha||x_\beta|}x_\beta y_\alpha-\{x_\alpha,x_\beta\}=0$;
\item $y_\alpha y_\beta-(-1)^{|y_\alpha||y_\beta|}y_\beta y_\alpha-\Psi(\{x_\alpha,x_\beta\})=0$,
\end{itemize}
for all bases. If $\{x_\alpha,x_\beta\}=\sum_{\alpha\in\Lambda} f_{\alpha}x_\alpha$ for some coefficients $f_\alpha\in \Bbbk$, we have $\psi(\{x_\alpha,x_\beta\})=\sum_{\alpha\in\Lambda} f_{\alpha}y_\alpha$. Finally, it is routine to check that $\mathbf{U}(L\rtimes L)$ is isomorphic to $(\mathbb{S}L)^e$ by sending $(x_\alpha,y_\beta)$ to $x_\alpha+y_\beta$ for all the generators $(x_\alpha,y_\beta)\in L\rtimes L$.\end{proof}

\medskip
\section{Applications of the ``universal property''}
\subsection{On $(A^e)^{op}=(A^{op})^e$}
We begin with the following lemma.
\begin{lemma}\label{opop}
Let $A,B,C\in \mathbf{ DGPA}$. We have the following statements:
\begin{enumerate}
  \item If $f: A\to B$ is a differential graded (Lie) algebra map, then $f^{op}: A^{op}\to B^{op}$ defined by $f^{op}(a):=f(a)$ for any homogeneous element $a\in A$, is also a differential graded (Lie) algebra map. As a consequence, if $f$ is a differential graded Poisson algebra map, then so is $f^{op}$.

  \item $(A^{op})^{op}=A$ and $(f^{op})^{op}=f$, where $f$ is any differential graded Poisson algebra map in $\mathbf{ DGPA}$.

  \item If $f: A\to B$ and $g:B\to C$ be two differential graded (Lie) algebra maps, then $(gf)^{op}=g^{op}f^{op}: A^{op}\to C^{op}$ is also a differential graded (Lie) algebra map.
  \item $\Gamma$ is a commutative diagram in $\mathbf{ DGPA}$ if and only if $\Gamma^{op}$ is a commutative diagram in $\mathbf{ DGPA}$, where $\Gamma^{op}$ is obtained by replacing each map $f$ by $f^{op}$.
\end{enumerate}
\end{lemma}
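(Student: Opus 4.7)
The plan is to verify each of the four items as a direct computation using the explicit formulas for the opposite differential graded Poisson algebra given in Example \ref{op}, namely $a\cdot_{op}b=(-1)^{|a||b|}b\cdot a=a\cdot b$, $d_{op}=d$, and $\{a,b\}_{op}=(-1)^{|a||b|}\{b,a\}=-\{a,b\}$. Since $f^{op}$ is, as a function of the underlying graded $\Bbbk$-vector space, literally equal to $f$, all the verifications reduce to unraveling definitions and using the fact that $f$ is already known to preserve the original operations.

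For (1), I would first handle the differential graded algebra case: $f^{op}(a\cdot_{op}b)=f(a\cdot b)=f(a)\cdot f(b)=f^{op}(a)\cdot_{op}f^{op}(b)$, and $f^{op}\circ d_{op}=f\circ d=d\circ f=d_{op}\circ f^{op}$. For the differential graded Lie case, the only subtle point is the sign change in the bracket: since $\{a,b\}_{op}=-\{a,b\}$, one has $f^{op}(\{a,b\}_{op})=-f(\{a,b\})=-\{f(a),f(b)\}=\{f^{op}(a),f^{op}(b)\}_{op}$, so the two sign changes cancel. Combining these two cases together with the biderivation identity (which follows automatically because $f$ preserves products and brackets separately) gives the differential graded Poisson statement.

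Item (2) is an immediate double-negation: applying graded commutativity twice returns $a\cdot b$, applying the bracket sign-flip twice restores $\{a,b\}$, and $d_{op}=d$ is involutive. For maps, $(f^{op})^{op}$ and $f$ are the same set-theoretic function, so they agree as morphisms. For (3), observe that $g^{op}f^{op}$ and $(gf)^{op}$ are both the composite set function $a\mapsto g(f(a))$; by (1) each factor is a morphism in the appropriate opposite category, so the composite is too, and they coincide as morphisms $A^{op}\to C^{op}$. Item (4) is then a formal consequence of (1) and (3): a commuting square or triangle consists of equations of the form $gf=h$, and applying $(-)^{op}$ turns each such equation into $g^{op}f^{op}=h^{op}$, which is the corresponding equation in $\Gamma^{op}$; reversibility follows from (2).

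There is no serious obstacle here; the only place a reader might stumble is the sign bookkeeping in verifying that $f^{op}$ is a graded Lie algebra map, where the minus sign introduced by passing from $\{,\}$ to $\{,\}_{op}$ appears on both sides of the identity and cancels. All other verifications are purely formal, so the proof can be written as a short sequence of equalities with no induction or structural lemma needed beyond Example \ref{op}.
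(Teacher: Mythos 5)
Your proposal is correct and follows essentially the same route as the paper: the paper likewise reduces everything to item (1), checks the graded algebra and graded Lie cases by the same direct computations using Example \ref{op} (including the same cancellation of the minus sign introduced by $\{,\}_{op}$ on both sides), and dismisses (2)--(4) as immediate from the definition of ``$op$''. Your extra remarks spelling out (2)--(4) and noting that the Poisson case is just the conjunction of the two previous checks are harmless elaborations of the same argument.
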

\begin{proof}
It suffices to check (1) since (2)-(4) are immediate from the definition of ``$op$''. If $A,B\in \mathbf{ DGPA}$ and $f: A\to B$ is a differential graded algebra map, then for any homogeneous elements $a,b\in A$, then $f(ab)=f(a)f(b)$ and $fd^A=d^Bf$. Now for any homogeneous elements $x,y\in A^{op}$, by Example \ref{op} we have
$$f^{op}(xy)=f(xy)=f(x)f(y)=f^{op}(x)f^{op}(y)$$
 and
 $$d_{op}^Bf^{op}(x)=d^Bf(x)=fd^A(x)=f^{op}d^A_{op}(x),$$
which imply that $f^{op}: A^{op}\to B^{op}$ is a differential graded algebra map. If $f: A\to B$ is a differential graded Lie algebra map, then for any homogeneous elements $a,b\in A$, then $f(\{a,b\}_A)=\{f(a),f(b)\}_B$ and $fd^A=d^Bf$. Now for any homogeneous elements $x,y\in A^{op}$, by Example \ref{op} we have
$$f^{op}(\{x,y\}_{A^{op}})=-f\{x,y\}_A=-\{f(x),f(y)\}_B=\{f^{op}(x),f^{op}(y)\}_{B^{op}}$$
and
$$\;d_{op}^Bf^{op}(x)=d^Bf(x)=fd^A(x)=f^{op}d^A_{op}(x),$$
which imply that $f^{op}: A^{op}\to B^{op}$ is a differential graded Lie algebra map.
\end{proof}

\begin{theorem}\label{thop}
Let $A\in\mathbf{ DGPA}$. Then $(A^{op})^e=(A^e)^{op}$.
\end{theorem}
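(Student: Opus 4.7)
The plan is to invoke the universal property of the enveloping algebra (Theorem \ref{universalproperty}) together with its uniqueness (Proposition \ref{unique}): I shall show that $((A^e)^{op}, m^{op}, h^{op})$ satisfies the universal property that characterises $(A^{op})^e$, and then the desired isomorphism follows immediately from uniqueness.

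First I construct the two canonical structure maps for $A^{op}$ into $(A^e)^{op}$. Starting from $m\colon A\to A^e$ and $h\colon A\to A^e$ given by the construction of $A^e$, set $m^{op}\colon A^{op}\to (A^e)^{op}$ and $h^{op}\colon A^{op}\to ((A^e)^{op})_L$ by the same underlying assignments. By Lemma \ref{opop}(1), $m^{op}$ is a differential graded algebra map. For $h^{op}$, a direct computation using $\{a,b\}_{A^{op}}=-\{a,b\}_A$ (Example \ref{op}) and the identity $[x,y]_{(A^e)^{op}}=-[x,y]_{A^e}$ (which follows from expanding $x\cdot_{op}y=(-1)^{|x||y|}yx$) shows that $h^{op}$ is a differential graded Lie algebra map. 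I would then verify the two compatibility identities required in Definition \ref{univ}; for instance,
\[
h^{op}(a\cdot_{op}b)=m^{op}(a)\cdot_{op}h^{op}(b)+(-1)^{|a||b|}m^{op}(b)\cdot_{op}h^{op}(a)
\]
unravels (after turning opposite products into original ones) into the defining relation $h_{ab}=m_ah_b+(-1)^{|a||b|}m_bh_a$ of $A^e$, modulo an application of the companion relation $m_{\{a,b\}}=h_am_b-(-1)^{|a||b|}m_bh_a$ used to trade $h_bm_a$ for $m_ah_b$; the other compatibility is analogous.

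For the universal part, given test data $(D,\delta)\in\mathbf{DGA}$ with a differential graded algebra map $f\colon A^{op}\to D$ and a differential graded Lie algebra map $g\colon A^{op}\to D_L$ satisfying the two Definition~\ref{univ} compatibilities for $A^{op}$, I produce $\phi\colon(A^e)^{op}\to D$ by passing to opposites. Consider $f^{op}\colon A\to D^{op}$ and $g^{op}\colon A\to (D^{op})_L$. The nontrivial point is that $f^{op}$ is genuinely a differential graded algebra map even though $D$ need not be graded commutative: for any dga map $f\colon A\to D$ with $A$ graded commutative, one has $f(a)f(b)=(-1)^{|a||b|}f(b)f(a)$, which is exactly the identity needed to match $f^{op}(a\cdot b)$ with $f^{op}(a)\cdot_{op}f^{op}(b)$. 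That $g^{op}$ is a differential graded Lie algebra map follows from the negation of brackets on both sides. The compatibilities for $(f^{op},g^{op})$ over $A$ valued in $D^{op}$ reduce, after expanding the opposite products, to the compatibilities for $(f,g)$ over $A^{op}$ valued in $D$, with all Koszul signs matching up once one uses graded antisymmetry of $\{\,,\,\}_A$.

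Having assembled the data, Theorem \ref{universalproperty} produces a unique differential graded algebra map $\psi\colon A^e\to D^{op}$ with $\psi\circ m=f^{op}$ and $\psi\circ h=g^{op}$. Setting $\phi:=\psi^{op}\colon(A^e)^{op}\to D$, Lemma \ref{opop}(1) ensures $\phi$ is a differential graded algebra map, and Lemma \ref{opop}(2)-(3) give $\phi\circ m^{op}=f$ and $\phi\circ h^{op}=g$; uniqueness of $\phi$ follows from uniqueness of $\psi$ by the same device. Thus $((A^e)^{op},m^{op},h^{op})$ satisfies the universal property of $(A^{op})^e$, and Proposition \ref{unique} yields $(A^{op})^e\cong(A^e)^{op}$ as differential graded algebras. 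The main obstacle, and the place where the proof is most error-prone, is the sign bookkeeping when transferring the bi-derivation compatibilities through the opposite operation; the key trick that makes it go through is that graded commutativity of $A$ forces the image of any dga map out of $A$ to be graded commutative, which is precisely what allows $f^{op}\colon A\to D^{op}$ to remain multiplicative.
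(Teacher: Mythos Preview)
Your proposal is correct and follows essentially the same route as the paper: show that $((A^e)^{op},m^{op},h^{op})$ satisfies Definition~\ref{univ} for $A^{op}$ and conclude by Proposition~\ref{unique}. The paper likewise verifies the two compatibilities for $m^{op},h^{op}$ (its equations (4.3)--(4.7), using exactly the trade you describe of $h_bm_a$ for $m_ah_b$ via $m_{\{a,b\}}$), and then transports test data through ``op'' to invoke the universal property of $A^e$.

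One minor comment: the point you flag as ``nontrivial''---that $f^{op}\colon A\to D^{op}$ remains multiplicative although $D$ is not graded commutative---is in fact automatic and does not require graded commutativity of $A$. For any dga map $f\colon X\to Y$ one has
\[
f^{op}(x\cdot_{op}y)=f\bigl((-1)^{|x||y|}yx\bigr)=(-1)^{|x||y|}f(y)f(x)=f^{op}(x)\cdot_{op}f^{op}(y),
\]
so ``op'' preserves dga maps in general. Your argument via commutativity of the image is correct but unnecessary; the paper handles this step by a blanket appeal to Lemma~\ref{opop} (tacitly extended beyond $\mathbf{DGPA}$), which amounts to the same computation.
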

\begin{proof}
From the above we know that there are $\Bbbk$-linear maps $m,h:A\to A^e$ such that $m$ is a differential graded algebra map and $h$ is a differential graded Lie algebra map with the properties
\begin{align}
h_{ab}=m_ah_b+(-1)^{|a||b|}m_bh_a,\\
m_{\{a,b\}}=h_am_b-(-1)^{|a||b|}m_bh_a,
\end{align}
for any homogeneous elements $a,b\in A$. By the definition and Lemma \ref{opop}, we have the $\Bbbk$-linear maps $m^{op},h^{op}:A^{op}\to (A^e)^{op}$ such that $m^{op}$ is a differential graded algebra map and $h^{op}$ is a differential graded Lie algebra map.

Moreover, we claim that\begin{align}
h^{op}_{a\cdot_{op}b}=m^{op}_a\cdot_{op}h^{op}_b+(-1)^{|a||b|}m^{op}_b\cdot_{op}h^{op}_a,\\
m^{op}_{\{a,b\}_{op}}=h^{op}_a\cdot_{op}m^{op}_b-(-1)^{|a||b|}m^{op}_b\cdot_{op}h^{op}_a,
\end{align}
for any homogeneous elements $a,b\in A^{op}$.  In fact, we will check (4.3) first. By (4.2), we have
\begin{align}
h_bm_a-(-1)^{|a||b|}m_ah_b=m_{\{b,a\}},\\
h_am_b-(-1)^{|a||b|}m_bh_a=m_{\{a,b\}},
\end{align}
for any homogeneous elements $a,b\in A$. Now we combine (4.1), (4.5) and (4.6), we get
\begin{align}h_am_b+(-1)^{|a||b|}h_bm_a=h_{ab}\end{align}
for any homogeneous elements $a,b\in A$. Now (4.3) follows from by taking ``$^{op}$'' to both sides of (4.7). Now for (4.4), note that (4.4) is equivalent to
\begin{align*}-m_{\{a,b\}}=(-1)^{|a||b|}m_bh_a-h_am_b,\end{align*} for any homogeneous elements $a,b\in A$, which is the same as (4.2).

Now let $D^{op}$ be any differential $\mathbb{Z}$-graded algebra with a differential $\mathbb{Z}$-graded algebra map $f^{op}: A^{op}\to D^{op}$ and a differential $\mathbb{Z}$-graded Lie algebra map $g^{op}:A^{op}\to D^{op}$ satisfying
\begin{align*}
f^{op}(\{a,b\}_{op})=g^{op}(a)\cdot_{op}f^{op}(b)-(-1)^{|a||b|}f^{op}(b)\cdot_{op}g^{op}(a),\\
g^{op}(a\cdot_{op}b)=f^{op}(a)\cdot_{op}g^{op}(b)+(-1)^{|a||b|}f^{op}(b)\cdot_{op}g^{op}(a),
\end{align*}
for any homogeneous elements $a,b\in A^{op}$. Similar to the proof of the above claim, we have a differential graded algebra map $f: A\to D$ and a differential graded Lie algebra map $g: A\to D$ such that \begin{align*}
f(\{a,b\})=g(a)f(b)-(-1)^{|a||b|}f(b)g(a),\\
g(ab)=f(a)g(b)+(-1)^{|a||b|}f(b)g(a),
\end{align*}
for any homogeneous elements $a,b\in A$. Note that $A^e$ is the universal enveloping algebra of $A$, there exists a unique differential graded algebra map $\phi$, such that the following diagram
\[
\xymatrix{
A\ar[rr]^-{m}_-{h}\ar[dr]^-{f}_-{g} && A^{e}\ar@{-->}[dl]^-{\exists ! \phi}\\
& D&
}
\]
``bi-commutes''. By Lemma \ref{opop} again, there exists a unique differential graded algebra map $\phi^{op}$, such that the following diagram
\[
\xymatrix{
A^{op}\ar[rr]^-{m^{op}}_-{h^{op}}\ar[dr]^-{f^{op}}_-{g^{op}} && (A^{e})^{op}\ar@{-->}[dl]^-{\exists ! \phi^{op}}\\
& D^{op}&
}
\]
``bi-commutes''. Therefore, we have $(A^{op})^e=(A^e)^{op}$.
\end{proof}

\subsection{On $(A\otimes B)^e=A^e\otimes B^e$}
Now we turn to prove that $(A\otimes B)^e=A^e\otimes B^e$ for any $(A,\{,\}_A,d_A),(B,\{,\}_B,d_B)\in\mathbf{DGPA}$. By the definition of universal enveloping algebra of a differential graded Poisson algebra, we know that there are $\Bbbk$-linear maps $m^A, h^A, m^B$ and $h^B$ such that
$$m^A: (A,d_A)\to (A^e,\partial^A)\;\;\;{\rm and}\;\;\;m^B: (B,d_B)\to (B^e,\partial^B)$$
are differential graded algebra maps, and
$$h^A: (A,d_A,\{,\}_A)\to (A^e,\partial^A,[,])\;\;\;{\rm and}\;\;\;h^B: (B,d_B,\{,\}_B)\to (B^e,\partial^B,[,])$$
are differential graded Lie algebra maps, satisfying
\begin{align*}
h^A_{aa'}=m^A_ah^A_{a'}+(-1)^{|a||a'|}m^A_{a'}h^A_a,\;\;
m^A_{\{a,a'\}}=h^A_am^A_{a'}-(-1)^{|a||a'|}m^A_{a'}h^A_a,\\
h^B_{bb'}=m^B_bh^B_{b'}+(-1)^{|b||b'|}m^B_{b'}h^B_b,\;\;
m^B_{\{b,b'\}}=h^B_bm^B_{b'}-(-1)^{|b||b'|}m^B_{b'}h^B_b,
\end{align*}
for any homogeneous elements $a,a'\in A$ and $b,b'\in B$.

Retain the above notions and we begin with some lemmas.

\begin{lemma}\label{ot1}
 We have that
$$m:=m^A\otimes m^B: (A\otimes B, d_{A\otimes B}\to (A^e\otimes B^e,\partial_{A^e\otimes B^e})$$ is a differential graded algebra map.
\end{lemma}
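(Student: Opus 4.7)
The plan is to verify the three defining properties of a differential graded algebra map for $m := m^A \otimes m^B$: it has degree $0$, it is multiplicative with respect to the product $\star$ on $A\otimes B$ and the standard graded tensor product on $A^e\otimes B^e$, and it commutes with the differentials. Since $m^A$ and $m^B$ are individually differential graded algebra maps of degree $0$, the map $m$ automatically has degree $0$, so only multiplicativity and compatibility with differentials require checking.

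First I would verify multiplicativity. By Proposition \ref{tensor}, the product in $A\otimes B$ is
\[
(a\otimes b)\star(a'\otimes b')=(-1)^{|a'||b|}(aa')\otimes(bb'),
\]
while the multiplication in the graded tensor product $A^e\otimes B^e$ is governed by the Koszul sign rule
\[
(x\otimes y)(x'\otimes y')=(-1)^{|x'||y|}(xx')\otimes(yy').
\]
Applying $m=m^A\otimes m^B$ to the left-hand side, using that $m^A$ and $m^B$ are algebra maps, and noting that $|m^A_{a'}|=|a'|$ and $|m^B_b|=|b|$, one sees that the two Koszul signs match, so $m\bigl((a\otimes b)\star(a'\otimes b')\bigr)=m(a\otimes b)\cdot m(a'\otimes b')$. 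Unitality is immediate from $m^A_1=1$ and $m^B_1=1$.

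Next I would verify compatibility with differentials. Again from Proposition \ref{tensor},
\[
d_{A\otimes B}(a\otimes b)=d_A(a)\otimes b+(-1)^{|a|}a\otimes d_B(b),
\]
and the differential on the graded tensor product of DG algebras satisfies the same Leibniz formula with the sign controlled by $|m^A_a|=|a|$. Applying $m$ to the first expression and using $m^A\circ d_A=\partial^A\circ m^A$ and $m^B\circ d_B=\partial^B\circ m^B$, the two agree term-by-term.

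No step is genuinely hard; the only thing to watch is consistency of Koszul signs between the $\star$-product of Proposition \ref{tensor} and the standard Koszul sign rule for the tensor product of two (not necessarily graded commutative) differential graded algebras $A^e$ and $B^e$. Because the degrees satisfy $|m^A_a|=|a|$ and $|m^B_b|=|b|$, every sign on the $A\otimes B$ side is mirrored on the $A^e\otimes B^e$ side, so the identification is automatic.
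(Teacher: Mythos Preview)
Your proposal is correct and follows essentially the same approach as the paper: both verify multiplicativity via the matching Koszul signs (using $|m^A_{a'}|=|a'|$, $|m^B_b|=|b|$) and compatibility with differentials via the Leibniz rule together with $m^A d_A=\partial^A m^A$, $m^B d_B=\partial^B m^B$. The paper simply writes out the two computations explicitly, whereas you describe them in words; there is no substantive difference.
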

\begin{proof}
Let $a,a'\in A$ and $b,b'\in B$ be any homogeneous elements. Then $m$ is a graded algebra map since
\begin{eqnarray*}
m((a\otimes b)(a'\otimes b'))&=&(-1)^{|a'||b|}(m^A\otimes m^B)(aa'\otimes bb')\\
&=&(-1)^{|a'||b|}m^A(aa')\otimes m^B(bb')\\
&=&(-1)^{|a'||b|}m^A_am^A_{a'}\otimes m^B_bm^B_{b'}\\
&=&(m^A_a\otimes m^B_b)(m^A_{a'}\otimes m^B_{b'})\\
&=&m(a\otimes b)m(a'\otimes b').
\end{eqnarray*}
The algebra map $m$ is compatible with the differentials since
\begin{eqnarray*}
md_{A\otimes B}(a\otimes b)&=&(m^A\otimes m^B)(d_A(a)\otimes b+(-1)^{|a|}a\otimes d_B(b))\\
&=&m^A_{d_A(a)}\otimes m^B_b+(-1)^{|a|}m^A_a\otimes m^B_{d_B(b)}\\
&=&\partial_{A^e}(m^A_a)\otimes m^B_b+(-1)^{|a|}m^A_a\otimes\partial_{B^e}(m^B_b)\\
&=&\partial_{A^e\otimes B^e}(m^A_a\otimes m^B_b)\\
&=&\partial_{A^e\otimes B^e}m(a\otimes b).
\end{eqnarray*}
\end{proof}

\begin{lemma}\label{ot2}
We have that
\[h:=m^A\otimes h^B+h^A\otimes m^B: (A\otimes B, \{,\}_{A\otimes B}, d_{A\otimes B})\to (A^e\otimes B^e, [,], \partial_{A^e\otimes B^e})\]
is a differential graded Lie algebra map.
\end{lemma}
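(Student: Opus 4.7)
The plan is to verify the three defining properties of a differential graded Lie algebra map: that $h$ is $\Bbbk$-linear of degree $0$ (immediate from the construction, since all four of $m^A, h^A, m^B, h^B$ are degree $0$), that $h$ commutes with the differentials, and that $h$ preserves the graded Lie bracket.

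For compatibility with the differentials, I would compute $\partial_{A^e \otimes B^e}\bigl(m^A_a \otimes h^B_b + h^A_a \otimes m^B_b\bigr)$ using the graded Leibniz rule on $A^e\otimes B^e$ together with the identities $\partial^A(m^A_a) = m^A_{d_A(a)}$, $\partial^A(h^A_a) = h^A_{d_A(a)}$ and their $B$-analogues. Comparing this with $h\circ d_{A\otimes B}(a\otimes b) = h(d_A(a)\otimes b) + (-1)^{|a|}h(a\otimes d_B(b))$, the two expressions should match term by term. This step is essentially the same calculation as in Lemma~\ref{ot1}, just applied to each of the two summands defining $h$.

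The substantive task is to check that $h$ preserves the Lie bracket, i.e.
\[
h\bigl(\{a\otimes b,\, a'\otimes b'\}_{A\otimes B}\bigr) \;=\; \bigl[\,h(a\otimes b),\; h(a'\otimes b')\,\bigr]
\]
in $A^e\otimes B^e$. I would expand the left-hand side via Proposition~\ref{tensor}, producing four summands of the form $(-1)^{|a'||b|}(\text{stuff})$, and then use the defining relations of $A^e$ and $B^e$ to rewrite $m^A_{aa'}$, $h^A_{aa'}$, $m^A_{\{a,a'\}_A}$, $h^A_{\{a,a'\}_A}$ (and their $B$-analogues) in terms of the generators $m^A,h^A,m^B,h^B$. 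I would expand the right-hand side by bilinearity of the graded commutator into four mixed brackets $[m^A_a\otimes h^B_b, m^A_{a'}\otimes h^B_{b'}]$, $[m^A_a\otimes h^B_b, h^A_{a'}\otimes m^B_{b'}]$, $[h^A_a\otimes m^B_b, m^A_{a'}\otimes h^B_{b'}]$, $[h^A_a\otimes m^B_b, h^A_{a'}\otimes m^B_{b'}]$. The key inputs, coming from the defining relations of $A^e$ (and symmetrically for $B^e$), are
\[
[m^A_a, m^A_{a'}] = 0, \qquad [h^A_a, m^A_{a'}] = m^A_{\{a,a'\}_A}, \qquad [h^A_a, h^A_{a'}] = h^A_{\{a,a'\}_A},
\]
where the vanishing of the first commutator uses the graded commutativity of $A$ lifted to $A^e$. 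After substitution, the four terms on the right-hand side should collapse to exactly the four terms arising on the left-hand side.

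The principal obstacle is tracking the Koszul signs throughout: each commutator $[x\otimes y, x'\otimes y']$ in $A^e\otimes B^e$ generates one sign $(-1)^{|x'||y|}$ from the product $(x\otimes y)(x'\otimes y')$ and another sign $(-1)^{(|x|+|y|)(|x'|+|y'|)+|x||y'|}$ from the reversed product, and these have to be reconciled with the sign $(-1)^{|a'||b|}$ appearing in the Poisson bracket on $A\otimes B$ as well as with the internal signs produced when commuting $m^A_a$ past $m^A_{a'}$ or $m^B_b$ past $m^B_{b'}$ via graded commutativity. I expect the check to be a long but mechanical bookkeeping exercise, completely parallel in spirit to the graded Jacobi identity verification carried out in the proof of Proposition~\ref{tensor}.
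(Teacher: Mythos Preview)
Your proposal is correct and follows essentially the same route as the paper: expand both $h(\{a\otimes b,a'\otimes b'\})$ and $[h(a\otimes b),h(a'\otimes b')]$ fully in terms of the generators $m^A,h^A,m^B,h^B$, then match terms using the defining relations of $A^e$ and $B^e$ together with careful sign tracking, and separately verify compatibility with the differential. The paper carries out exactly this brute-force expansion (identifying six terms that match immediately and then reducing the remaining terms via the relations $h^A_a m^A_{a'} = m^A_{\{a,a'\}} + (-1)^{|a||a'|}m^A_{a'}h^A_a$, etc.), whereas your phrasing in terms of the commutator identities $[m^A_a,m^A_{a'}]=0$, $[h^A_a,m^A_{a'}]=m^A_{\{a,a'\}}$, $[h^A_a,h^A_{a'}]=h^A_{\{a,a'\}}$ is a slightly cleaner way to organize the same computation.
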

\begin{proof}
For any homogeneous elements $a\otimes b, a'\otimes b'\in A\otimes B$, we have
\begin{eqnarray*}
&&h(\{a\otimes b,a'\otimes b'\}_{A\otimes B})\\
&=&(m^A\otimes h^B+h^A\otimes m^B)(\{a\otimes b,a'\otimes b'\}_{A\otimes B})\\
&=&(-1)^{|a'||b|}(m^A\otimes h^B+h^A\otimes m^B)(aa'\otimes \{b,b'\}_B+\{a,a'\}_A\otimes bb')\\
&=&(-1)^{|a'||b|}m^A_am^A_{a'}\otimes h^B_bh^B_{b'}-(-1)^{|a'||b|+|b||b'|}m^A_am^A_{a'}\otimes h^B_{b'}h^B_b+(-1)^{|a'||b|}h^A_am^A_{a'}\otimes m^B_bh^B_{b'}\\
&&+(-1)^{|a'||b|+|b||b'|}h^A_am^A_{a'}\otimes m^B_{b'}h^B_b-(-1)^{|a'||b|+|a||a'|}m^A_{a'}h^A_a\otimes m^B_bh^B_{b'}-(-1)^{|a'||b|+|a||a'|+|b||b'|}m^A_{a'}h^A_a\otimes m^B_{b'}h^B_b\\
&&+(-1)^{|a'||b|}m_a^Ah^A_{a'}\otimes h^B_bm^B_{b'}-(-1)^{|a'||b|+|b||b'|}m^A_ah^A_{a'}\otimes m^B_{b'}h^B_b+(-1)^{|a'||b|+|a||a'|}m^A_{a'}h^A_a\otimes h^B_bm^B_{b'}\\
&&-(-1)^{|a'||b|+|a||a'|+|b||b'|}m^A_{a'}h^A_{a}\otimes m^B_{b'}h^B_b+(-1)^{|a'||b|}h^A_ah^A_{a'}\otimes m^B_bm^B_{b'}-(-1)^{|a'||b|+|a||a'|}h^A_{a'}h^A_a\otimes m^B_bm^B_{b'}
\end{eqnarray*}
and
\begin{eqnarray*}
&&[h(a\otimes b),h(a'\otimes b')]\\
&=&[(m^A\otimes h^B+h^A\otimes m^B)(a\otimes b),(m^A\otimes h^B+h^A\otimes m^B)(a'\otimes b')]\\
&=&(m_a^A\otimes h_b^B+h_a^A\otimes m_b^B)(m_{a'}^A\otimes h_{b'}^B+h_{a'}^A\otimes m_{b'}^B)-(-1)^{(|a|+|b|)(|a'|+|b'|)}(m_{a'}^A\otimes h_{b'}^B+h_{a'}^A\otimes m_{b'}^B)(m_a^A\otimes h_b^B+h_a^A\otimes m_b^B)\\
&=&(-1)^{|a'||b|}m^A_am^A_{a'}\otimes h^B_bh^B_{b'}+(-1)^{|a'||b|}m^A_ah^A_{a'}\otimes h^B_bm^B_{b'}+(-1)^{|a'||b|}h^A_am^A_{a'}\otimes m^B_bh^B_{b'}+(-1)^{|a'||b|}h^A_ah^A_{a'}\otimes m^B_bm^B_{b'}\\
&&-(-1)^{|a'||b|+|a||a'|+|b||b'|}m^A_{a'}m^A_a\otimes h^B_{b'}h^B_b-(-1)^{|a'||b|+|a||a'|+|b||b'|}m^A_{a'}h^A_a\otimes h^B_{b'}m^B_b\\
&&-(-1)^{|a'||b|+|a||a'|+|b||b'|}h^A_{a'}m^A_a\otimes m^B_{b'}h^B_b-(-1)^{|a'||b|+|a||a'|+|b||b'|}h^A_{a'}h^A_a\otimes m^B_{b'}m^B_b.
\end{eqnarray*}

Note that there are six exactly same terms in $h(\{a\otimes b,a'\otimes b'\}_{A\otimes B})$ and $[h(a\otimes b),h(a'\otimes b')]$, thus in order to prove
$$h(\{a\otimes b,a'\otimes b'\}_{A\otimes B})=[h(a\otimes b),h(a'\otimes b')],$$it suffices to prove
\begin{eqnarray*}
&&-(-1)^{|a'||b|+|a||a'|+|b||b'|}m^A_{a'}h^A_a\otimes h^B_{b'}m^B_b-(-1)^{|a'||b|+|a||a'|+|b||b'|}h^A_{a'}m^A_a\otimes m^B_{b'}h^B_b\\
&=&(-1)^{|a'||b|+|b||b'|}h^A_am^A_{a'}\otimes m^B_{b'}h^B_b-(-1)^{|a'||b|+|a||a'|}m^A_{a'}h^A_a\otimes m^B_bh^B_{b'}-(-1)^{|a'||b|+|a||a'|+|b||b'|}m^A_{a'}h^A_{a}\otimes m^B_{b'}h^B_b\\
&&-(-1)^{|a'||b|+|b||b'|}m^A_ah^A_{a'}\otimes m^B_{b'}h^B_b+(-1)^{|a'||b|+|a||a'|}m^A_{a'}h^A_a\otimes h^B_bm^B_{b'}-(-1)^{|a'||b|+|a||a'|+|b||b'|}m^A_{a'}h^A_{a}\otimes m^B_{b'}h^B_b.
\end{eqnarray*}

Note that
\[
(-1)^{|a'||b|+|b||b'|}h^A_am^A_{a'}\otimes m^B_{b'}h^B_b=(-1)^{|a'||b|+|b||b'|}(m^A_{\{a,a'\}}+(-1)^{|a||a'|}m^A_{a'}h^A_{a})\otimes m^B_{b'}h^B_b\]
and
\[
(-1)^{|a'||b|+|a||a'|}m^A_{a'}h^A_a\otimes h^B_bm^B_{b'}=(-1)^{|a'||b|+|a||a'|}m^A_{a'}h^A_a\otimes ((-1)^{|b||b'|}m^B_{b'}h^B_b+m^B_{\{b,b'\}}),\]
we need to prove
\begin{eqnarray*}
&&-(-1)^{|a'||b|+|a||a'|+|b||b'|}m^A_{a'}h^A_a\otimes h^B_{b'}m^B_b-(-1)^{|a'||b|+|a||a'|+|b||b'|}h^A_{a'}m^A_a\otimes m^B_{b'}h^B_b\\
&=&(-1)^{|a'||b|+|b||b'|}m^A_{\{a,a'\}}\otimes m^B_{b'}h^B_b-(-1)^{|a'||b|+|b||b'|}m^A_ah^A_{a'}\otimes m^B_{b'}h^B_b\\
&&-(-1)^{|a'||b|+|a||a'|}m^A_{a'}h^A_a\otimes m^B_bh^B_{b'}+(-1)^{|a'||b|+|a||a'|}m^A_{a'}h^A_a\otimes m^B_{\{b,b'\}},
\end{eqnarray*}
which follows from
\begin{eqnarray*}
&&(-1)^{|a'||b|+|b||b'|}m^A_{\{a,a'\}}\otimes m^B_{b'}h^B_b-(-1)^{|a'||b|+|b||b'|}m^A_ah^A_{a'}\otimes m^B_{b'}h^B_b\\
&=&-(-1)^{|a'||b|+|a||a'|+|b||b'|}(-(-1)^{|a||a'|}m^A_{\{a,a'\}}+(-1)^{|a||a'|}m^A_ah^A_{a'})\otimes m^B_{b'}h^B_b\\
&=&-(-1)^{|a'||b|+|a||a'|+|b||b'|}(m^A_{\{a',a\}}+(-1)^{|a||a'|}m^A_ah^A_{a'})\otimes m^B_{b'}h^B_b\\
&=&-(-1)^{|a'||b|+|a||a'|+|b||b'|}h^A_{a'}m^A_a\otimes m^B_{b'}h^B_b\end{eqnarray*}
and
\begin{eqnarray*}
&&-(-1)^{|a'||b|+|a||a'|}m^A_{a'}h^A_a\otimes m^B_bh^B_{b'}+(-1)^{|a'||b|+|a||a'|}m^A_{a'}h^A_a\otimes m^B_{\{b,b'\}}\\
&=&-(-1)^{|a'||b|+|a||a'|+|b||b'|}m^A_{a'}h^A_a\otimes ((-1)^{|b||b'|}m^B_bh^B_{b'}-(-1)^{|b||b'|}m^B_{\{b,b'\}})\\
&=&-(-1)^{|a'||b|+|a||a'|+|b||b'|}m^A_{a'}h^A_a\otimes ((-1)^{|b||b'|}m^B_bh^B_{b'}+m^B_{\{b',b\}})\\
&=&-(-1)^{|a'||b|+|a||a'|+|b||b'|}m^A_{a'}h^A_a\otimes h^B_{b'}m^B_b.\end{eqnarray*}

The graded Lie algebra map $h$ is compatible with the differential since
\begin{eqnarray*}
&&hd_{A\otimes B}(a\otimes b)\\
&=&(h^A\otimes m^B+m^A\otimes h^B)(d_A(a)\otimes b+(-1)^{|a|}a\otimes d_B(b))\\
&=&h^A_{d_A(a)}\otimes m^B_b+(-1)^{|a|}h^A_a\otimes m^B_{d_B(b)}+m^A_{d_A(a)}\otimes h^B_b+(-1)^{|a|}m^A_a\otimes h^B_{d_B(b)}\\
&=&\partial_{A^e}(h^A_a)\otimes m^B_b+(-1)^{|a|}h^A_a\otimes\partial_{B^e}(m^B_b)+\partial_{A^e}(m^A_a)\otimes h^B_b+(-1)^{|a|}m^A_a\otimes\partial_{B^e}(h^B_b)\\
&=&\partial_{A^e\otimes B^e}(h^A_a\otimes m^B_b)+\partial_{A^e\otimes B^e}(m^A_a\otimes h^B_b)\\
&=&\partial_{A^e\otimes B^e}(h^A\otimes m^B+m^A\otimes h^B)(a\otimes b)\\
&=&\partial_{A^e\otimes B^e}h(a\otimes b).\end{eqnarray*}
\end{proof}
\begin{lemma}\label{ot3}
Retain the notations of Lemma \ref{ot1} and \ref{ot2}. For any homogeneous elements $a\otimes b, a'\otimes b'\in A\otimes B$, we have
\[
m(\{a\otimes b,a'\otimes b'\}_{A\otimes B})=h(a\otimes b)m(a'\otimes b')-(-1)^{(|a|+|b|)(|a'|+|b'|)}m(a'\otimes b')h(a\otimes b)
\]
and
\[
h((a\otimes b)(a'\otimes b'))=m(a\otimes b)h(a'\otimes b')+(-1)^{(|a|+|b|)(|a'|+|b'|)}m(a'\otimes b')h(a\otimes b).
\]
\end{lemma}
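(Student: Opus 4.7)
The plan is to verify both identities by direct expansion, relying on three ingredients: (a) the product and bracket formulas on $A\otimes B$ from Proposition \ref{tensor}; (b) the Koszul sign rule $(x\otimes y)(x'\otimes y')=(-1)^{|y||x'|}xx'\otimes yy'$ for multiplication in $A^e\otimes B^e$; and (c) the defining relations of $A^e$ (respectively $B^e$), most crucially
\[
h^A_{aa'}=m^A_ah^A_{a'}+(-1)^{|a||a'|}m^A_{a'}h^A_a,\qquad m^A_{\{a,a'\}}=h^A_am^A_{a'}-(-1)^{|a||a'|}m^A_{a'}h^A_a,
\]
together with the graded commutativity $m^A_{a'}m^A_a=(-1)^{|a||a'|}m^A_am^A_{a'}$ inside $A^e$ (which follows from $m^A_{aa'}=m^A_am^A_{a'}$ and the graded commutativity of $A$), and the analogous identities for $B$.

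For the first identity, I would begin by rewriting the left-hand side via Proposition \ref{tensor}:
\[
m(\{a\otimes b,a'\otimes b'\})=(-1)^{|a'||b|}\bigl(m^A_{\{a,a'\}}\otimes m^B_bm^B_{b'}+m^A_am^A_{a'}\otimes m^B_{\{b,b'\}}\bigr),
\]
so the LHS is expressed purely in terms of $m^A_{\{a,a'\}}$ and $m^B_{\{b,b'\}}$. On the other side, expanding
$h(a\otimes b)m(a'\otimes b')-(-1)^{(|a|+|b|)(|a'|+|b'|)}m(a'\otimes b')h(a\otimes b)$
via the Koszul rule produces four monomial tensors; after applying graded commutativity $m^A_{a'}m^A_a=(-1)^{|a||a'|}m^A_am^A_{a'}$ and its $B$-analogue, the four terms regroup as
\[
(-1)^{|a'||b|}m^A_am^A_{a'}\otimes\bigl(h^B_bm^B_{b'}-(-1)^{|b||b'|}m^B_{b'}h^B_b\bigr)+(-1)^{|a'||b|}\bigl(h^A_am^A_{a'}-(-1)^{|a||a'|}m^A_{a'}h^A_a\bigr)\otimes m^B_bm^B_{b'},
\]
which is precisely the LHS after using the two $m$-$\{,\}$ relations.

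The second identity is proved in an entirely parallel manner: expand $h((a\otimes b)(a'\otimes b'))$ using the product in $A\otimes B$ followed by the $h_{aa'}$-relations on both sides to get
\[
(-1)^{|a'||b|}\bigl(m^A_am^A_{a'}\otimes(m^B_bh^B_{b'}+(-1)^{|b||b'|}m^B_{b'}h^B_b)+(m^A_ah^A_{a'}+(-1)^{|a||a'|}m^A_{a'}h^A_a)\otimes m^B_bm^B_{b'}\bigr),
\]
then expand $m(a\otimes b)h(a'\otimes b')+(-1)^{(|a|+|b|)(|a'|+|b'|)}m(a'\otimes b')h(a\otimes b)$ by the Koszul rule, use graded commutativity to realign the $m^A_{a'}m^A_a$ and $m^B_{b'}m^B_b$ factors, and match the resulting four monomials against the expansion above.

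The main obstacle is entirely sign bookkeeping: each side decomposes into roughly four (or eight, counting the two parts of $h$) monomial tensors of the form $m^A_?h^A_?\otimes m^B_?h^B_?$ carrying exponents assembled from the bidegrees of $a,a',b,b'$, and these exponents must be tracked carefully through the Koszul rule and the graded commutativity step. Precisely the same regrouping trick used in the proof of Lemma \ref{ot2} (isolating the summands that, after a sign twist, assemble into $m^A_{\{a,a'\}}$ or $h^A_{aa'}$) does the job here, so no new conceptual ingredients are needed.
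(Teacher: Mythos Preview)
Your proposal is correct and follows essentially the same approach as the paper: both proofs expand each side using the Koszul sign rule in $A^e\otimes B^e$, the defining relations $m^A_{\{a,a'\}}=h^A_am^A_{a'}-(-1)^{|a||a'|}m^A_{a'}h^A_a$ and $h^A_{aa'}=m^A_ah^A_{a'}+(-1)^{|a||a'|}m^A_{a'}h^A_a$ (and their $B$-analogues), together with graded commutativity of the $m$-generators, then match the resulting four monomial tensors. The only minor difference is presentational: the paper simply lists the four-term expansion of each side and observes they coincide, whereas you describe the regrouping step explicitly.
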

\begin{proof}
The first equation follows from
\begin{eqnarray*}
&&m(\{a\otimes b,a'\otimes b'\}_{A\otimes B})\\
&=&(m^A\otimes m^B)(\{a\otimes b,a'\otimes b'\}_{A\otimes B})\\
&=&(-1)^{|a'||b|}(m^A_{aa'}\otimes m^B_{\{b,b'\}_B}+m^A_{\{a,a'\}_A}\otimes m^B_{bb'})\\
&=&(-1)^{|a'||b|}(m^A_am^A_{a'}\otimes (h^B_bm^B_{b'}-(-1)^{|b||b'|}m^B_{b'}h^B_b)+(h^A_am^A_{a'}-(-1)^{|a||a'|}m^A_{a'}h^A_a)\otimes m^B_bm^B_{b'})\\
&=&(-1)^{|a'||b|}m^A_am^A_{a'}\otimes h^B_bm^B_{b'}-(-1)^{|a'||b|+|b||b'|}m^A_am^A_{a'}\otimes m^B_{b'}h^B_b+(-1)^{|a'||b|}h^A_am^A_{a'}\otimes m^B_bm^B_{b'}\\
&&-(-1)^{|a'||b|+|a||a'|}m^A_{a'}h^A_a\otimes m^B_bm^B_{b'}
\end{eqnarray*}
and
\begin{eqnarray*}
&&h(a\otimes b)m(a'\otimes b')-(-1)^{(|a|+|b|)(|a'|+|b'|)}m(a'\otimes b')h(a\otimes b)\\
&=&(m^A_a\otimes h_b^B+h^A_a\otimes m^B_b)(m^A_{a'}\otimes m^B_{b'})-(-1)^{(|a|+|b|)(|a'|+|b'|)}(m^A_{a'}\otimes m^B_{b'})(h^A_a\otimes m^B_b+m^A_a\otimes h^B_b)\\
&=&(-1)^{|a'||b|}m^A_am^A_{a'}\otimes h^B_bm^B_{b'}-(-1)^{|a'||b|+|b||b'|}m^A_am^A_{a'}\otimes m^B_{b'}h^B_b+(-1)^{|a'||b|}h^A_am^A_{a'}\otimes m^B_bm^B_{b'}\\
&&-(-1)^{|a'||b|+|a||a'|}m^A_{a'}h^A_a\otimes m^B_bm^B_{b'}.\end{eqnarray*}

The second equation follows from
\begin{eqnarray*}
&&h((a\otimes b)(a'\otimes b'))\\
&=&(-1)^{|a'||b|}(h^A\otimes m^B+m^A\otimes h^B)(aa'\otimes bb')\\
&=&(-1)^{|a'||b|}(h^A_{aa'}\otimes m^B_{bb'}+m^A_{aa'}\otimes h^B_{bb'})\\
&=&(-1)^{|a'||b|}((m^A_ah^A_{a'}+(-1)^{|a||a'|}m^A_{a'}h^A_a)\otimes m^B_bm^B_{b'}+m^A_am^A_{a'}\otimes (m^B_bh^B_{b'}+(-1)^{|b||b'|}m^B_{b'}h^B_b))\\
&=&(-1)^{|a'||b|}m^A_ah^A_{a'}\otimes m^B_bm^B_{b'}+(-1)^{|a'||b|+|a||a'|}m^A_{a'}h^A_a\otimes m^B_bm^B_{b'}+(-1)^{|a'||b|}m^A_am^A_{a'}\otimes m^B_bh^B_{b'}\\
&&+(-1)^{|a'||b|+|b||b'|}m^A_am^A_{a'}\otimes m^B_{b'}h^B_b
\end{eqnarray*}
and

\begin{eqnarray*}
&&m(a\otimes b)h(a'\otimes b')+(-1)^{(|a|+|b|)(|a'|+|b'|)}m(a'\otimes b')h(a\otimes b)\\
&=&(m^A_a\otimes m^B_b)(m^A_{a'}\otimes h^B_{b'}+h^A_{a'}\otimes m^B_{b'})+(-1)^{(|a|+|b|)(|a'|+|b'|)}(m^A_{a'}\otimes m^B_{b'})(m^A_{a}\otimes h^B_{b}+h^A_{a}\otimes m^B_{b})\\
&=&(-1)^{|a'||b|}m^A_am^A_{a'}\otimes m^B_bh^B_{b'}+(-1)^{|a'||b|}m^A_ah^A_{a'}\otimes m^B_bm^B_{b'}+(-1)^{|a'||b|+|a||a'|+|b||b'|}m^A_{a'}h^A_a\otimes m^B_{b'}m^B_{b}\\
&&+(-1)^{|a'||b|+|a||a'|+|b||b'|}m^A_{a'}m^A_{a}\otimes m^B_{b'}h^B_b\\
&=&(-1)^{|a'||b|}m^A_am^A_{a'}\otimes m^B_bh^B_{b'}+(-1)^{|a'||b|}m^A_ah^A_{a'}\otimes m^B_bm^B_{b'}+(-1)^{|a'||b|+|a||a'|}m^A_{a'}h^A_a\otimes m^B_bm^B_{b'}\\
&&+(-1)^{|a'||b|+|b||b'|}m^A_am^A_{a'}\otimes m^B_{b'}h^B_b.\end{eqnarray*}
\end{proof}

\begin{lemma}\label{ot4}
Let $A,B\in\mathbf{DGPA}$ and $(D,\delta)\in\mathbf{DGA}$. Let
$$p: (A\otimes B, \partial_{A\otimes B})\to (D,\delta)$$ be a differential graded algebra map and
$$q: (A\otimes B, \{,\}_{A\otimes B},\partial_{A\otimes B})\to (D,[,],\delta)$$ be a differential graded Lie algebra map such that
\[
p(\{a\otimes b,a'\otimes b'\}_{A\otimes B})=q(a\otimes b)p(a'\otimes b')-(-1)^{(|a|+|b|)(|a'|+|b'|)}p(a'\otimes b')q(a\otimes b)
\]
and
\[
q((a\otimes b)(a'\otimes b'))=p(a\otimes b)q(a'\otimes b')+(-1)^{(|a|+|b|)(|a'|+|b'|)}p(a'\otimes b')q(a\otimes b)
\]
for all homogeneous elements $a\otimes b, a'\otimes b'\in A\otimes B$. Let $i_A: A\to A\otimes B$ send $a$ to $a\otimes 1$ and $i_B: B\to A\otimes B$ send $a$ to $1\otimes b$. Denote
\[
p_A:=pi_A,\;q_A:=qi_A,\;p_B:=pi_B,\;q_B:=qi_B.
\]
Then
\begin{enumerate}
  \item $p_A$ is a differential graded algebra map and $q_A$ is a differential graded Lie algebra map such that
 \[
 p_A(\{a,a'\})=q_A(a)p_A(a')-(-1)^{|a||a'|}p_A(a')q_A(a)\] and
\[q_A(aa')=p_A(a)q_A(a')+(-1)^{|a||a'|}p_A(a')q_A(a)
 \] for any homogeneous elements $a,a'\in A$.
 \item $p_B$ is a differential graded algebra map and $q_B$ is a differential graded Lie algebra map such that
 \[
 p_B(\{b,b'\})=q_B(b)p_B(b')-(-1)^{|b||b'|}p_B(b')q_B(b)\] and
\[q_B(bb')=p_B(b)q_B(b')+(-1)^{|b||b'|}p_B(b')q_B(b)
 \] for any homogeneous elements $b,b'\in B$.
\end{enumerate}
\end{lemma}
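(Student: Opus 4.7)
The plan is to reduce the statement to the following observation: the canonical inclusions $i_A\colon A\to A\otimes B$ and $i_B\colon B\to A\otimes B$ are themselves differential graded Poisson algebra maps. Once this is established, both conclusions will follow by composing $p$ and $q$ with $i_A$ (resp.\ $i_B$) and restricting the two hypothesised identities to elements of the form $a\otimes 1$ and $a'\otimes 1$ (resp.\ $1\otimes b$ and $1\otimes b'$).

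First I would verify that $i_A$ is a differential graded algebra map; this is immediate from the formula for the product on $A\otimes B$ in Proposition \ref{tensor} together with $d_{A\otimes B}(a\otimes 1)=d_A(a)\otimes 1$. To see that $i_A$ is also a differential graded Lie algebra map, the key auxiliary fact is that $\{1,b\}_B=0=\{b,1\}_B$ for every homogeneous $b\in B$; this is a standard consequence of the biderivation property applied to $1=1\cdot 1$. Plugging this into the bracket on $A\otimes B$ gives
\[
\{a\otimes 1,a'\otimes 1\}_{A\otimes B}=\{a,a'\}_A\otimes 1=i_A(\{a,a'\}_A),
\]
and compatibility with differentials is automatic. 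The same reasoning works for $i_B$ (using $\{1,a\}_A=0$).

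With this in hand, $p_A=p\circ i_A$ is a composition of two dg-algebra maps, hence a dg-algebra map, and likewise $q_A=q\circ i_A$ is a dg-Lie algebra map. For the two mixed compatibility identities in (1), I would simply substitute $b=b'=1$ into the hypotheses for $p,q$: since $(a\otimes 1)(a'\otimes 1)=aa'\otimes 1$ and $\{a\otimes 1,a'\otimes 1\}_{A\otimes B}=\{a,a'\}_A\otimes 1$, the two identities in the hypothesis specialise directly to
\begin{align*}
p_A(\{a,a'\}_A) &= q_A(a)p_A(a')-(-1)^{|a||a'|}p_A(a')q_A(a),\\
q_A(aa')       &= p_A(a)q_A(a')+(-1)^{|a||a'|}p_A(a')q_A(a).
\end{align*}
Part (2) is proved in the same way, using $i_B$ in place of $i_A$.

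There is no substantial obstacle here; the only subtlety to watch is confirming the vanishing $\{1,-\}=0$ in a graded Poisson algebra, which guarantees that the Poisson bracket on $A\otimes B$ restricts cleanly along each inclusion, so that the signs in Proposition \ref{tensor} collapse to the expected identities and no extra cross terms appear.
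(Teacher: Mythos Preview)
Your proposal is correct and follows essentially the same approach as the paper: both arguments rest on the fact that $i_A$ (and $i_B$) preserve the product, bracket, and differential, using $\{1,-\}=0$ and $d(1)=0$ to see that the tensor-product structures restrict cleanly, and then specialise the hypotheses on $p,q$ to elements of the form $a\otimes 1$. The only cosmetic difference is that you package this as ``$i_A$ is a DGPA map, so compose'', whereas the paper writes out each verification line by line.
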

\begin{proof}
It suffices to prove (1) since (2) can be proved similarly. Let $a,a'\in A$ be any homogeneous elements. Note that $p_A$ is a differential graded algebra map follows from
\[
p_A(aa')=p(aa'\otimes 1)=p((a\otimes 1)(a'\otimes 1))=p(a\otimes 1)p(a'\otimes 1)=p_A(a)p_A(a')
\]and
\[
p_Ad_A(a)=p(d_A(a)\otimes 1)=pd_{A\otimes B}(a\otimes 1)=\delta p(a\otimes 1)=\delta p_A(a).
\]

$q_A$ is a differential graded Lie algebra map follows from
\[
q_A(\{a,a'\}_A)=q(\{a,a'\}_A\otimes 1)=q(\{a,a'\}_A\otimes 1+aa'\otimes \{1,1\}_B)=q(\{a\otimes 1,a'\otimes 1\}_{A\otimes B})=[q_A(a),q_A(a')]\]and
\[
q_Ad_A(a)=q(d_A(a)\otimes 1+(-1)^{|a|}a\otimes d_B(1))=qd_{A\otimes B}(a\otimes 1)=\delta q(a\otimes 1)=\delta q_A(a).
\]

Now note that

\begin{eqnarray*}
p_A(\{a,a'\}_A)&=&p(\{a,a'\}_A\otimes 1)\\
&=&p(\{a\otimes 1,a'\otimes 1\}_{A\otimes B})\\
&=&q(a\otimes 1)p(a'\otimes1)-(-1)^{|a||a'|}p(a'\otimes 1)q(a\otimes1)\\
&=&q_A(a)p_A(a')-(-1)^{|a||a'|}p_A(a')q_A(a)
\end{eqnarray*}
and
\begin{eqnarray*}
q_A(aa')&=&q(aa'\otimes 1)\\
&=&q((a\otimes 1)(a'\otimes 1))\\
&=&p(a\otimes 1)q(a'\otimes1)+(-1)^{|a||a'|}p(a'\otimes1)q(a\otimes1)\\
&=&p_A(a)q_A(a')+(-1)^{|a||a'|}p_A(a')q_A(a),
\end{eqnarray*}
which complete the proof.
\end{proof}

\begin{lemma}\label{ot5}
Retain the Notations of Lemma \ref{ot4}. We have the following statements:
\begin{enumerate}
  \item There is a unique differential graded algebra map $f_A:(A^e,\partial_{A^e})\to (D,\delta)$ such that the following diagram
  \[
\xymatrix{
A\ar[rr]^-{m^A}_-{h^A}\ar[dr]^-{p_A}_-{q_A} && A^{e}\ar@{-->}[dl]^-{\exists ! f_A}\\
& D&
}
\]``bi-commutes''.
  \item There is a unique differential graded algebra map $f_B:(B^e,\partial_{B^e})\to (D,\delta)$ such that the following diagram
  \[
\xymatrix{
B\ar[rr]^-{m^B}_-{h^B}\ar[dr]^-{p_B}_-{q_B} && B^{e}\ar@{-->}[dl]^-{\exists ! f_B}\\
& D&
}
\]``bi-commutes''.
\end{enumerate}
\end{lemma}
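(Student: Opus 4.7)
The plan is to reduce both statements directly to the universal property of the universal enveloping algebra established in Theorem \ref{universalproperty}, using the data that has already been extracted in Lemma \ref{ot4}. Since the two statements are completely symmetric, I will only describe statement (1); statement (2) follows by the same argument with the roles of $A$ and $B$ interchanged.

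First, I would collect what Lemma \ref{ot4}(1) already gives us: $p_A:(A,d_A)\to (D,\delta)$ is a differential graded algebra map, $q_A:(A,\{,\}_A,d_A)\to(D,[,],\delta)$ is a differential graded Lie algebra map, and they satisfy the two compatibility identities
\begin{align*}
p_A(\{a,a'\}_A) &= q_A(a)p_A(a')-(-1)^{|a||a'|}p_A(a')q_A(a),\\
q_A(aa') &= p_A(a)q_A(a')+(-1)^{|a||a'|}p_A(a')q_A(a),
\end{align*}
for all homogeneous $a,a'\in A$. These are exactly the hypotheses required to invoke the universal property of $A^e$ formulated in Definition \ref{univ} and established in Theorem \ref{universalproperty}.

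Next, I would apply Theorem \ref{universalproperty} with $(D,\delta)$ as the target and with the specific pair $(f,g):=(p_A,q_A)$. The theorem then produces a unique differential graded algebra map $f_A:(A^e,\partial_{A^e})\to (D,\delta)$ with the property that $f_A\circ m^A=p_A$ and $f_A\circ h^A=q_A$, i.e.\ the diagram
\[
\xymatrix{
A\ar[rr]^-{m^A}_-{h^A}\ar[dr]^-{p_A}_-{q_A} && A^{e}\ar@{-->}[dl]^-{\exists ! f_A}\\
& D&
}
\]
``bi-commutes''. This yields (1); uniqueness is part of the conclusion of Theorem \ref{universalproperty}. Statement (2) is obtained identically by applying Theorem \ref{universalproperty} to $B$, with the pair $(p_B,q_B)$ coming from Lemma \ref{ot4}(2).

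There is no real obstacle here: the entire content of the lemma is packaged in verifying that the hypotheses of the universal property are satisfied, and Lemma \ref{ot4} has already done exactly this verification. The lemma is best viewed as a bookkeeping step that isolates the two ``universal factorizations'' $f_A$ and $f_B$, which will subsequently be combined (in the proof of the theorem $(A\otimes B)^e\cong A^e\otimes B^e$) to construct a map from $(A\otimes B)^e$ to $D$ out of the pair $(p,q)$.
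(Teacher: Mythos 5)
Your proposal is correct and is essentially the paper's own argument: the paper likewise quotes the compatibility identities for $(p_A,q_A)$ from Lemma \ref{ot4} and then invokes the universal property of $A^e$ (Definition \ref{univ}, established in Theorem \ref{universalproperty}) to produce the unique $f_A$, with $(2)$ handled symmetrically. No gaps.
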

\begin{proof}
It suffices to prove (1) since (2) can be proved similarly.

Note that $A^e$ is the universal enveloping algebra of $A$, $m^A$ is a differential graded algebra map and $h^A$ is a differential graded Lie algebra map satisfying
\begin{align*}
h^A_{aa'}=m^A_ah^A_{a'}+(-1)^{|a||a'|}m^A_{a'}h^A_a,\\
m^A_{\{a,a'\}}=h^A_am^A_{a'}-(-1)^{|a||a'|}m^A_{a'}h^A_a,
\end{align*}
for any homogeneous elements $a,a'\in A$.
By Lemma \ref{ot4}, we have that $p_A$ is a differential graded algebra map and $q_A$ is a differential graded Lie algebra map such that
 \[
 p_A(\{a,a'\})=q_A(a)p_A(a')-(-1)^{|a||a'|}p_A(a')q_A(a)\] and
\[q_A(aa')=p_A(a)q_A(a')+(-1)^{|a||a'|}p_A(a')q_A(a)
 \] for any homogeneous elements $a,a'\in A$. Now by Definition \ref{univ}, we finish the proof of (1).
\end{proof}

\begin{lemma}\label{ot6}
Retain the notations of Lemma \ref{ot5}. For any homogeneous elements $x\in A^e$ and $y\in B^e$, we have
\[
f_A(x)f_B(y)=(-1)^{|x||y|}f_B(y)f_A(x).
\]
\end{lemma}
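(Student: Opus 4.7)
The plan is to reduce the claim to checking graded-commutation on the generators of $A^e$ and $B^e$. Recall that $A^e$ is generated as a $\mathbb{Z}$-graded algebra by $\{m^A_a, h^A_a : a \in A\}$ and similarly $B^e$ is generated by $\{m^B_b, h^B_b : b \in B\}$, and by construction $f_A(m^A_a)=p_A(a)$, $f_A(h^A_a)=q_A(a)$, $f_B(m^B_b)=p_B(b)$, $f_B(h^B_b)=q_B(b)$. A routine induction on the length of a product shows that if each of the four generator pairs graded-commutes in $D$, then $f_A(x)$ and $f_B(y)$ graded-commute for all homogeneous $x\in A^e$, $y\in B^e$. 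Thus it suffices to verify
\[
p_A(a)p_B(b)=(-1)^{|a||b|}p_B(b)p_A(a),\quad p_A(a)q_B(b)=(-1)^{|a||b|}q_B(b)p_A(a),
\]
\[
q_A(a)p_B(b)=(-1)^{|a||b|}p_B(b)q_A(a),\quad q_A(a)q_B(b)=(-1)^{|a||b|}q_B(b)q_A(a),
\]
for all homogeneous $a\in A$ and $b\in B$.

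The first relation is immediate from $p$ being a graded algebra map, together with the identities $(a\otimes 1)(1\otimes b)=a\otimes b$ and $(1\otimes b)(a\otimes 1)=(-1)^{|a||b|}a\otimes b$ in $A\otimes B$. For the remaining three, the key fact is that $\{a\otimes 1,1\otimes b\}_{A\otimes B}=0$ in $A\otimes B$. This follows from the formula for the bracket on a tensor product given in Proposition \ref{tensor}, combined with $\{x,1\}=0$ in any graded Poisson algebra (which is forced by the biderivation property applied to $\{x,1\cdot 1\}=2\{x,1\}$). Then the hypothesis that $q$ is a graded Lie algebra map gives $0=q(\{a\otimes 1,1\otimes b\}_{A\otimes B})=q_A(a)q_B(b)-(-1)^{|a||b|}q_B(b)q_A(a)$, establishing the fourth identity.

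Finally, I would feed the two vanishing brackets $\{a\otimes 1,1\otimes b\}_{A\otimes B}=\{1\otimes b,a\otimes 1\}_{A\otimes B}=0$ into the compatibility relation
\[
p(\{u,v\}_{A\otimes B})=q(u)p(v)-(-1)^{|u||v|}p(v)q(u)
\]
from the hypothesis of Lemma \ref{ot4}. Choosing $(u,v)=(a\otimes 1,1\otimes b)$ yields the third identity $q_A(a)p_B(b)=(-1)^{|a||b|}p_B(b)q_A(a)$, and choosing $(u,v)=(1\otimes b,a\otimes 1)$ yields the second. There is no substantive obstacle here; the only thing to be careful about is the inductive extension from generators to all elements, which needs the standard fact that graded-commutation with a fixed element is preserved under products on either side, a direct sign computation. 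Once the four generator identities are in place, this induction closes the proof.
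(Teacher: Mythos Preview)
Your proof is correct and follows essentially the same approach as the paper: reduce to generators, then verify the four graded-commutation identities using that $p$ is a graded algebra map, that $q$ is a graded Lie algebra map, that $\{a\otimes 1,1\otimes b\}_{A\otimes B}=0$, and the compatibility relation $p(\{u,v\})=q(u)p(v)-(-1)^{|u||v|}p(v)q(u)$. The paper is slightly terser in justifying the reduction to generators and the vanishing of the bracket, but the logical content is identical.
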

\begin{proof}
Note that $f_A$ and $f_B$ are graded algebra maps, we only need to check the equation on the homogeneous generators. Now the result follows from
\begin{eqnarray*}
f_A(m^A_a)f_B(m^B_b)&=&p_A(a)p_B(b)\\
&=&p(a\otimes 1)p(1\otimes b)\\
&=&p(a\otimes b)\\
&=&(-1)^{|a||b|}p((1\otimes b)(a\otimes1))\\
&=&(-1)^{|a||b|}p_B(b)p_A(a)\\
&=&(-1)^{|a||b|}f_B(m^B_b)f_A(m^A_a),
\end{eqnarray*}
 \begin{eqnarray*}
 f_A(m^A_a)f_B(h^B_b)&=&p_A(a)q_B(b)\\
 &=&p(a\otimes 1)q(1\otimes b)\\
 &=&(-1)^{|a||b|}(q(1\otimes b)p(a\otimes 1)-p(\{1\otimes b,a\otimes 1\}_{A\otimes B}))\\
 &=&(-1)^{|a||b|}q(1\otimes b)p(a\otimes 1)\\
 &=&(-1)^{|a||b|}q_B(b)p_A(a)\\
 &=&(-1)^{|a||b|}f_B(h^B_b)f_A(m^A_a),
 \end{eqnarray*}
 \begin{eqnarray*}
 f_A(h^A_a)f_B(m^B_b)&=&q_A(a)p_B(b)\\
 &=&q(a\otimes 1)p(1\otimes b)\\
 &=&p(\{a\otimes1,1\otimes b\}_{A\otimes B})+(-1)^{|a||b|}p(1\otimes b)q(a\otimes1)\\
 &=&(-1)^{|a||b|}p(1\otimes b)q(a\otimes1)\\
 &=&(-1)^{|a||b|}p_B(b)q_A(a)\\
 &=&(-1)^{|a||b|}f_B(m^B_b)f_A(h^A_a)
\end{eqnarray*}
 and
\begin{eqnarray*}
 f_A(h^A_a)f_B(h^B_b)&=&q_A(a)q_B(b)\\
 &=&q(a\otimes1)q(1\otimes b)\\
 &=&q(\{a\otimes1,1\otimes b\}_{A\otimes B})+(-1)^{|a||b|}q(1\otimes b)q(a\otimes 1)\\
 &=&(-1)^{|a||b|}q(1\otimes b)q(a\otimes 1)\\
 &=&(-1)^{|a||b|}q_B(b)q_A(a)\\
 &=&(-1)^{|a||b|}f_B(h^B_b)f_A(h^A_a)
\end{eqnarray*}
for any homogeneous elements $a\in A$ and $b\in B$.
 \end{proof}

Now we are ready to state and prove our main result.
\begin{theorem}\label{thotimes}
Let $A,B\in\mathbf{ DGPA}$. Then $(A\otimes B)^e\cong A^e\otimes B^e$.
\end{theorem}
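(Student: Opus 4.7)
The plan is to invoke the universal property of $(A\otimes B)^e$ established in Theorem \ref{universalproperty} together with the preparatory Lemmas \ref{ot1}--\ref{ot6} to construct a pair of mutually inverse differential graded algebra maps between $(A\otimes B)^e$ and $A^e\otimes B^e$.

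First I would build a map $\Phi:(A\otimes B)^e\to A^e\otimes B^e$ out of the data already packaged in Lemmas \ref{ot1}--\ref{ot3}. Namely, Lemma \ref{ot1} gives the differential graded algebra map $m=m^A\otimes m^B$, Lemma \ref{ot2} gives the differential graded Lie algebra map $h=m^A\otimes h^B+h^A\otimes m^B$, and Lemma \ref{ot3} verifies exactly the two mixed identities (bracket-to-commutator and biderivation) that the pair $(m,h)$ must satisfy to trigger the universal property of $(A\otimes B)^e$. Applying Theorem \ref{universalproperty} with $D=A^e\otimes B^e$, $f=m$, $g=h$ produces the unique differential graded algebra map $\Phi$ with $\Phi\circ m^{A\otimes B}=m$ and $\Phi\circ h^{A\otimes B}=h$.

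Second I would construct an inverse $\Psi:A^e\otimes B^e\to(A\otimes B)^e$. I apply Lemma \ref{ot4} with $(D,\delta)=((A\otimes B)^e,\partial)$, $p=m^{A\otimes B}$, $q=h^{A\otimes B}$; the two compatibility hypotheses needed in that lemma hold by the very defining relations of $(A\otimes B)^e$. This yields differential graded (Lie) algebra maps $p_A,q_A,p_B,q_B$ satisfying the hypotheses of Definition \ref{univ}, and Lemma \ref{ot5} then produces unique differential graded algebra maps $f_A:A^e\to (A\otimes B)^e$ and $f_B:B^e\to(A\otimes B)^e$. Lemma \ref{ot6} shows that $f_A(x)$ and $f_B(y)$ graded-commute in $(A\otimes B)^e$, so the rule $\Psi(x\otimes y):=f_A(x)f_B(y)$ defines a differential graded algebra map (graded commutativity of the images is exactly what is needed for multiplicativity against the sign convention in the tensor product, and compatibility with differentials is inherited from $f_A$ and $f_B$).

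Third I would verify $\Phi\Psi=\mathrm{id}$ and $\Psi\Phi=\mathrm{id}$. Since both compositions are differential graded algebra endomorphisms, it suffices to check them on generators. For $\Phi\Psi$, one evaluates on the four families $m^A_a\otimes 1$, $h^A_a\otimes 1$, $1\otimes m^B_b$, $1\otimes h^B_b$, which generate $A^e\otimes B^e$ as a dga; tracing through the formulas for $m,h,f_A,f_B$ shows each is fixed. For $\Psi\Phi$, the cleanest argument is to note that both $\Psi\Phi$ and $\mathrm{id}_{(A\otimes B)^e}$ fit into the universal diagram for $(A\otimes B)^e$ with data $(m^{A\otimes B},h^{A\otimes B})$, so the uniqueness clause of Theorem \ref{universalproperty} forces them to coincide. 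The main obstacle is bookkeeping rather than mathematics: one must keep the four generator types of $A^e\otimes B^e$ straight, track the Koszul signs consistently through the tensor-product conventions, and confirm that $\Phi$ sends $m^{A\otimes B}_{a\otimes 1}\mapsto m^A_a\otimes 1$, $h^{A\otimes B}_{a\otimes 1}\mapsto h^A_a\otimes 1$, $m^{A\otimes B}_{1\otimes b}\mapsto 1\otimes m^B_b$, $h^{A\otimes B}_{1\otimes b}\mapsto 1\otimes h^B_b$, while $\Psi$ realises the reverse assignments, after which the general case follows from multiplicativity.
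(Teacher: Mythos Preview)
Your proposal is correct and uses the same preparatory lemmas as the paper, but the final assembly is organised differently. The paper does not build a map $\Phi:(A\otimes B)^e\to A^e\otimes B^e$ at all; instead it shows directly that the triple $(A^e\otimes B^e,\,m,\,h)$ satisfies Definition~\ref{univ} for \emph{every} test object $(D,\delta,p,q)$: given such data, Lemmas~\ref{ot4}--\ref{ot6} produce $f_A,f_B$ and hence $f=f_A\otimes f_B:A^e\otimes B^e\to D$, and one checks $fm=p$, $fh=q$ together with uniqueness. The isomorphism then falls out of Proposition~\ref{unique}. Your version instead specialises that construction to the single choice $D=(A\otimes B)^e$ to obtain $\Psi$, uses the universal property of $(A\otimes B)^e$ with $D=A^e\otimes B^e$ to obtain $\Phi$, and then checks the two composites by hand. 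Both routes are valid; the paper's is marginally slicker since the uniqueness clause of the universal property replaces your generator-by-generator verification of $\Phi\Psi=\mathrm{id}$, while yours has the advantage of giving the isomorphism explicitly rather than through an abstract uniqueness argument.
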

\begin{proof}
Define a $\Bbbk$-linear map
\[
f:=f_A\otimes f_B: A^e\otimes B^e\to D\;\;{\rm via}\;\;f(x\otimes y)=f_A(x)f_B(y)
\] for any homogeneous element $x\otimes y\in A^e\otimes B^e$.

Now we claim that $f$ is a differential graded algebra map. In fact, by Lemma \ref{ot6}, for any homogeneous elements $x\otimes y, x'\otimes y'\in A^e\otimes B^e$, we have
\begin{eqnarray*}
f((x\otimes y)(x'\otimes y'))&=&(-1)^{|x'||y|}f(xx'\otimes yy')\\
&=&(-1)^{|x'||y|}f_A(xx')f_B(yy')\\
&=&(-1)^{|x'||y|}f_A(x)f_A(x')f_B(y)f_B(y')\\
&=&f_A(x)f_B(y)f_A(x')f_B(y')\\
&=&f(x\otimes y)f(x'\otimes y')
\end{eqnarray*}
and
\begin{eqnarray*}
f\partial_{A^e\otimes B^e}(x\otimes y)&=&(f_A\otimes f_B)(\partial_{A^e}(x)\otimes y+(-1)^{|x|}x\otimes \partial_{B^e}(y))\\
&=&f_A\partial_{A^e}(x)f_B(y)+(-1)^{|x|}f_A(x)f_B\partial_{B^e}(y)\\
&=&\delta f_A(x)f_B(y)+(-1)^{|x|}f_A(x)\delta f_B(y)\\
&=&\delta(f_A\otimes f_B)(x\otimes y)\\
&=&\delta f(x\otimes y).
\end{eqnarray*}
Therefore, by Lemmas \ref{ot4} and \ref{ot5}, we have the following ``bi-commutative'' diagram:

\[
\xymatrix{
& A\ar[dl]_-{i_A}\ar[dr]^-{p_A,\;q_A}\ar[rr]^-{m^A,\; h^A} & & A^e\ar@{-->}[dl]^-{\exists !f_{A}}\ar[dr]^-{i_{A^e}}&\\
A\otimes B\ar[rr]^-{p,\;q}&  &D & & A^e\otimes B^e\ar@{-->}[ll]_-{f}\\
& B\ar[ur]^-{p_B,\;q_B}\ar[ul]^-{i_B}\ar[rr]_-{m^B, \;h^B}& & B^e\ar@{-->}[ul]_-{\exists !f_{B}}\ar[ur]_-{i_{B^e}}&\\
}
\]
Here $(D,\delta)$ is any differential graded algebra with a differential graded algebra map $p: A\otimes B\to D$ and a differential graded Lie algebra map $q:A\otimes B\to D$ such that
\[
p(\{a\otimes b,a'\otimes b'\}_{A\otimes B})=q(a\otimes b)p(a'\otimes b')-(-1)^{(|a|+|b|)(|a'|+|b'|)}p(a'\otimes b')q(a\otimes b)
\]
and
\[
q((a\otimes b)(a'\otimes b'))=p(a\otimes b)q(a'\otimes b')+(-1)^{(|a|+|b|)(|a'|+|b'|)}p(a'\otimes b')q(a\otimes b)
\]
for all homogeneous elements $a\otimes b, a'\otimes b'\in A\otimes B$. By Lemmas \ref{ot1}, \ref{ot2} and \ref{ot3}, we have that $$m:=m^A\otimes m^B: (A\otimes B, d_{A\otimes B}\to (A^e\otimes B^e,\partial_{A^e\otimes B^e})$$ is a differential graded algebra map and \[h:=m^A\otimes h^B+h^A\otimes m^B: (A\otimes B, \{,\}_{A\otimes B}, d_{A\otimes B})\to (A^e\otimes B^e, [,], \partial_{A^e\otimes B^e})\]
is a differential graded Lie algebra map satisfying
\[
m(\{a\otimes b,a'\otimes b'\}_{A\otimes B})=h(a\otimes b)m(a'\otimes b')-(-1)^{(|a|+|b|)(|a'|+|b'|)}m(a'\otimes b')h(a\otimes b)
\]
and
\[
h((a\otimes b)(a'\otimes b'))=m(a\otimes b)h(a'\otimes b')+(-1)^{(|a|+|b|)(|a'|+|b'|)}m(a'\otimes b')h(a\otimes b)
\]
for any homogeneous elements $a\otimes b, a'\otimes b'\in A\otimes B$.

Now we claim that we have the following ``bi-commutative'' diagram
 \[
\xymatrix{
A\otimes B\ar[rr]^-{m}_-{h}\ar[dr]^-{p}_-{q} && A^e\otimes B^{e}\ar[dl]^-{ f}\\
& D&
}
\]
Indeed, for any homogeneous element $a\otimes b\in A\otimes B$, $fm=p$ follows from
\begin{eqnarray*}
fm(a\otimes b)&=&f(m^A_a\otimes m^B_b)\\
&=&f_Am^A_af_Bm^B_b\\
&=&p_A(a)p_B(b)\\
&=&pi_A(a)pi_B(b)\\
&=&p(a\otimes1)p(1\otimes b)\\
&=&p(a\otimes b),
\end{eqnarray*}
and $fh=q$ follows from
\begin{eqnarray*}
fh(a\otimes b)&=&f(m^A_a\otimes h^B_b+h^A_a\otimes m^B_b)\\
&=&f_Am^A_af_Bh^B_b+f_Ah^A_af_Bm^B_b\\
&=&p_A(a)q_B(b)+q_A(a)p_B(b)\\
&=&pi_A(a)qi_B(b)+qi_A(a)pi_B(b)\\
&=&p(a\otimes 1)q(1\otimes b)+q(a\otimes1)p(1\otimes b)\\
&=&p(a\otimes 1)q(1\otimes b)+p(\{a\otimes 1,1\otimes b\}_{A\otimes B})+(-1)^{|a||b|}p(1\otimes b)q(a\otimes 1)\\
&=&p(a\otimes 1)q(1\otimes b)+(-1)^{|a||b|}p(1\otimes b)q(a\otimes1)\\
&=&q(a\otimes b).
\end{eqnarray*}
By the universal property of $A^e$, $B^e$ and the construction of $f$, we know that such $f$ is also unique. Now by Definition \ref{univ}, we know that
$$(A\otimes B)^e\cong A^e\otimes B^e,$$
which completes the proof.
\end{proof}

As a direct application of Theorem \ref{thotimes}, we have
\begin{corollary}\label{mono}
The functor ``$e$'' in Corollary \ref{e} is a monoidal functor.
\end{corollary}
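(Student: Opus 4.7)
The plan is to assemble the data of a monoidal functor $(e, \mu, \eta)$ from the results already in hand and then verify the three coherence axioms. Recall that a (strong) monoidal functor between monoidal categories consists of a functor together with a natural isomorphism $\mu_{A,B} : e(A)\otimes e(B) \xrightarrow{\sim} e(A\otimes B)$ and an isomorphism $\eta : \Bbbk \xrightarrow{\sim} e(\Bbbk)$ satisfying the associativity pentagon and the two unit triangles. Corollary~\ref{e} already gives us the functor $e$, Theorem~\ref{thotimes} provides the candidate $\mu_{A,B}$, and the unit isomorphism $\eta$ is the obvious map $\Bbbk \to \Bbbk^e$ sending $1 \mapsto m_1 = 1$, which is an isomorphism since $\Bbbk$ has trivial bracket and differential (so the relations collapse $h_\lambda = 0$ and $m_\lambda = \lambda$, forcing $\Bbbk^e \cong \Bbbk$).

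First I would verify that the isomorphism $\mu_{A,B}:A^e\otimes B^e \to (A\otimes B)^e$ produced in the proof of Theorem~\ref{thotimes} is natural in both arguments. Given differential graded Poisson algebra maps $f:A\to A'$ and $g:B\to B'$, the naturality square reduces to the statement that the two differential graded algebra maps $(f\otimes g)^e\circ\mu_{A,B}$ and $\mu_{A',B'}\circ(f^e\otimes g^e)$ from $A^e\otimes B^e$ to $(A'\otimes B')^e$ agree. Both fit into a bicommutative diagram with $A\otimes B$ and $(A'\otimes B')^e$ (via the canonical $m,h$ maps), so by the universal property of $(A\otimes B)^e\cong A^e\otimes B^e$ established in Theorem~\ref{thotimes} they must coincide.

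Next I would verify the coherence axioms. For the associativity pentagon, I would show that both composites
\[
(A^e\otimes B^e)\otimes C^e \to (A\otimes B)^e \otimes C^e \to ((A\otimes B)\otimes C)^e
\]
and
\[
A^e\otimes(B^e\otimes C^e) \to A^e\otimes(B\otimes C)^e \to (A\otimes(B\otimes C))^e
\]
agree after identifying the domains and codomains via the standard associators in $\mathbf{DGA}$ and $\mathbf{DGPA}$. Both composites are differential graded algebra maps from $A^e\otimes B^e\otimes C^e$ to $(A\otimes B\otimes C)^e$, and both restrict correctly along the embeddings $m$ and $h$ from $A\otimes B\otimes C$. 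Hence uniqueness in the universal property of $(A\otimes B\otimes C)^e$ (Definition~\ref{univ}) forces equality. The left and right unit triangles are handled in the same style: both paths $\Bbbk\otimes A^e \to \Bbbk^e\otimes A^e \to (\Bbbk\otimes A)^e \to A^e$ and the left unitor of $\mathbf{DGA}$ produce differential graded algebra maps $\Bbbk\otimes A^e \to A^e$ that restrict correctly on $m_A$ and $h_A$ inside $\Bbbk\otimes A \cong A$, so again uniqueness closes the argument.

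The main obstacle I anticipate is bookkeeping rather than substance: one must keep track of the Koszul signs appearing in the symmetry of the monoidal structure on $\mathbf{DGA}$ and $\mathbf{DGPA}$ (established in Proposition~\ref{tensor}) to make sure that $\mu$ is in fact a morphism in $\mathbf{DGA}$ (not merely of underlying vector spaces) and that the symmetry constraint is also respected, which would upgrade $e$ to a symmetric monoidal functor. All of these sign checks, however, reduce to comparing two differential graded algebra maps out of some $X^e$ that agree on the generating images $m_X(X)$ and $h_X(X)$, so the universal property in Theorem~\ref{universalproperty} consistently does the work for us.
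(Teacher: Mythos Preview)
Your proposal is correct and in fact considerably more careful than the paper's own argument. The paper's proof of Corollary~\ref{mono} is a three-line sketch: it invokes Proposition~\ref{tensor} and Theorem~\ref{thotimes} to handle the tensor compatibility, and then computes $h_1 = h_{1\cdot 1} = m_1h_1 + m_1h_1 = 2h_1$ to conclude $h_1=0$ and hence $\Bbbk^e\cong\Bbbk$. That is the entire proof; the paper does not explicitly verify naturality of $\mu_{A,B}$ or the pentagon and triangle coherence axioms, treating ``monoidal functor'' somewhat informally as the existence of the isomorphisms $(A\otimes B)^e\cong A^e\otimes B^e$ and $\Bbbk^e\cong\Bbbk$.

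Your approach fills in exactly the details the paper omits, and does so by the natural method: all coherence checks reduce to comparing two differential graded algebra maps out of some universal enveloping algebra that agree on the images of $m$ and $h$, whereupon the uniqueness clause of Theorem~\ref{universalproperty} finishes each one. This is the right strategy and yields a genuinely complete proof. The paper's brevity buys readability at the cost of rigor; your version buys rigor at the cost of length, but the added work is routine and your outline makes clear why no surprises occur.
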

\begin{proof}
By Proposition \ref{tensor} and Theorem \ref{thotimes}, it is enough to prove $\Bbbk^e=\Bbbk$. By the construction of $\Bbbk^e$, we have
\[h_1=h_{1\times1}=m_1h_1+(-1)^{|1||1|}m_1h_1=2h_1,\]which implies that $h_1=0$. Hence $\Bbbk^e=\Bbbk$.
\end{proof}

Moreover, we can build a link between universal enveloping algebra of a differential graded Poisson algebra and the classic universal enveloping algebra of an algebra:
\begin{corollary}\label{coop}
Let $A$ be a differential graded Poisson algebra. Then $(A^{\mathcal{E}})^e\cong(A^e)^{\mathcal{E}}$.
\end{corollary}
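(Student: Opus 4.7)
The plan is to chain together the two main structural theorems just proved, namely Theorem \ref{thop} (which says ``$e$'' commutes with the $\mathrm{op}$ construction) and Theorem \ref{thotimes} (which says ``$e$'' is multiplicative with respect to tensor products). Since $A^{\mathcal{E}}$ is defined to be $A \otimes A^{op}$, the left-hand side $(A^{\mathcal{E}})^e$ is literally $(A \otimes A^{op})^e$, and the right-hand side $(A^e)^{\mathcal{E}}$ is literally $A^e \otimes (A^e)^{op}$. So the corollary is essentially a formal consequence of the two preceding theorems.

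More precisely, I would argue as follows. First, by Example \ref{op} the opposite $A^{op}$ is again a differential graded Poisson algebra, so it lies in $\mathbf{DGPA}$ and Theorem \ref{thotimes} applies to the pair $(A, A^{op})$, yielding a differential graded algebra isomorphism
\[
(A \otimes A^{op})^e \;\cong\; A^e \otimes (A^{op})^e.
\]
Next, Theorem \ref{thop} provides a differential graded algebra isomorphism $(A^{op})^e \cong (A^e)^{op}$. Tensoring on the left with the identity on $A^e$ gives
\[
A^e \otimes (A^{op})^e \;\cong\; A^e \otimes (A^e)^{op} \;=\; (A^e)^{\mathcal{E}}.
\]
Composing the two isomorphisms produces the desired isomorphism $(A^{\mathcal{E}})^e \cong (A^e)^{\mathcal{E}}$ in $\mathbf{DGA}$.

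Since each ingredient is already a differential graded algebra isomorphism, no further compatibility check is required, and there is no substantial obstacle: the content of the corollary has been absorbed into Theorems \ref{thop} and \ref{thotimes}. The only mildly subtle point, which is worth flagging in the write-up, is that one should invoke Example \ref{op} explicitly so that Theorem \ref{thotimes} can be applied with $B = A^{op}$.
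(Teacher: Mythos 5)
Your proposal is correct and follows essentially the same route as the paper's own proof: write $(A^{\mathcal{E}})^e=(A\otimes A^{op})^e$, apply Theorem \ref{thotimes} to get $A^e\otimes(A^{op})^e$, and then Theorem \ref{thop} to replace $(A^{op})^e$ by $(A^e)^{op}=(A^e)^{\mathcal{E}}$ up to the tensor factor. Your explicit appeal to Example \ref{op} to justify $A^{op}\in\mathbf{DGPA}$ is a harmless (and reasonable) addition that the paper leaves implicit.
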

\begin{proof}
By Theorems \ref{thop} and \ref{thotimes}, we have
\[
(A^{\mathcal{E}})^e=(A\otimes A^{op})^e\cong A^e\otimes (A^{op})^e\cong A^e\otimes (A^e)^{op}=(A^e)^{\mathcal{E}},\]
as required.
\end{proof}

We conclude this paper with the following observation:
\begin{remark}\label{4.13}
By Theorem \ref{equivalence}, we know that the differential graded Poisson module category $\mathbf{DGP}(A)$ is isomorphic to the differential graded module category $\mathbf{DG}(A^e)$, which can be localized at quasi-isomorphisms such that it becomes a triangulated category. Hence, we can study the (co)homology theories in $\mathbf{DGP}(A)$. In particular, we can define the Hochschild (co)homology for $A$ as a differential graded Poisson $A$-bimodule, or equivalently, a differential graded Poisson $A\otimes A^{op}$-module. From the previous results, we know it is the same as the Hochschild (co)homology of $A$ as a differential graded $(A\otimes A^{op})^e=A^e\otimes (A^{op})^e=A^e\otimes (A^e)^{op}$ module, or differential graded $A^e$-bimodule.
\end{remark}

\vspace{0.5 cm}


\begin{thebibliography}{99}

\bibitem{AFH} L.L. Avramov, H.B. Foxby and S. Halperin, {\it Differential graded Homological algebras}, preprint.

\bibitem{BG} K.A. Brown and I. Gordon, {\it Poisson orders, symplectic reflection algebras and representation theory}, J. Reine Angew. Math. 559 (2003), 193-216.

\bibitem{BK} A. Belov and M. Kontsevich, {\it The Jacobian conjecture is stably equivalent to the Dixmier conjecture}, Mosc. Math. J. 7(2) (2007), 209-218.

\bibitem{BS} K.A. Beck and S. Sather-Wagstaff, {\it A somewhat gentle introduction to differential graded commutative algebra}, arXiv: 1307.0369v1.

\bibitem{C} A.J. Calder\'on, {\it On extended graded Poisson algebras}, Linear Alg. Appl. 439 (2013), 879-892.

\bibitem{CD} J.M. Casas and T. Datuashvili, {\it Noncommutative Leibniz-Poisson algebras}, Comm. Algebra 34(7) (2006), 2507-2530.

\bibitem{CDL} J.M. Casas, T. Datuashvili and M. Ladra, {\it Left-right noncommutative Poisson algebras}, Cent. Eur. J. Math. 12(1) (2014), 57-78.

\bibitem{CFL} A.S. Cattaneo, D. Fiorenza and R. Longoni, {\it Graded Poisson algebras}, Encyclopedia Math. Phy. (2006), 560-567.

\bibitem{EK} P. Etingof and D. Kazhdan, {\it Quantization of Poisson algebraic groups and Poisson homogeneous spaces}, Sym\'etries quantiques (Les Houches, 1995), 935-946, North-Holland, Amsterdam, 1998.

%\bibitem{F} D.R. Farkas. {\it Modules for Poisson algebras}. Comm. Algebra 28 (7) (2000), 3293-3306.

\bibitem{FIJ} A. Frankild, S. Iyengar and P. J\o rgensen, {\it Dualizing differential graded modules and Gorenstein differential graded algebras}, J. London Math. Soc. (2) 68(2) (2003), 288-306.

\bibitem{FJ1} A. Frankild and P. J\o rgensen, {\it Homological identities for differential graded algebras}, J. Algebra 265(1) (2003), 114-135.

\bibitem{FJ2} A. Frankild and P. J\o rgensen, {\it Gorenstein differential graded algebras}, Israel J. Math. 135 (2003), 327-353.

\bibitem{GM} W.M. Goldman and J.J. Millson, {\it Differential graded Lie algebras and singularities of level sets of momentum mappings}, Comm. Math. Phy. 131(3) (1990), 495-515.

\bibitem{H} J. Huebschmann, {\it Poisson cohomology and quantization}, J. Reine Angew. Math. 408 (1990) 57-113.

\bibitem{K} M. Kontsevich,  {\it Deformation quantization of Poisson manifolds}, Lett. Math. Phys. 66(3) (2003), 157-216.

%\bibitem{LMZ} J.-F. L\"u, X.-F. Mao, J.J. Zhang. {\it Differential graded Poisson categories}, in preparation.

\bibitem{LWZ1} J.-F. L\"u, X. Wang and G. Zhuang, {\it Universal enveloping algebras of Poisson Hopf algebras}, arXiv:1402.2007.

\bibitem{LWZ2} J.-F. L\"u, X. Wang and G. Zhuang, {\it Universal enveloping algebras of Poisson Ore-extensions}, preprint.

%\bibitem{LWZ3} J.-F. L\"u, X.-T. Wang, G.-B. Zhuang. {\it Hochschild cohomohogy of differential graded Poisson algebras}, in preparation.

\bibitem{LWZ4} J.-F. L\"u, X. Wang and G. Zhuang, {\it Artin-Schelter regularity of Poisson universal enveloping algebras}, in preparation.

\bibitem{LWZ5} J.-F. L\"u, X. Wang and G. Zhuang, {\it PBW-basis of universal enveloping algebras of differential graded Poisson algebras}, in preparation.

%\bibitem{M} M. Manetti. {\it Deformation theory via differential graded Lie algebras}. Algebraic Geometry Seminars, 1998-1999 (Italian) (Pisa), (1999), 21-48.

\bibitem{MPR} S.P. Mishchenko, V.M. Petrogradsky and A. Regev, {\it Poisson PI algebras}, Trans. Amer. Math. Soc. 359(10) (2007), 4669-4694.

\bibitem{Oh1} S.-Q. Oh, {\it Poisson enveloping algebras}, Comm. Algebra 27 (1999), 2181-2186.

%\bibitem{Oh2} S.-Q. Oh. {\it Hopf structure for Poisson enveloping algebras}, Contributions to Algebra and Geometry 44(2) (2003),  567-574.

%\bibitem{Oh3} S.-Q. Oh. {\it Poisson polynomial rings}, Comm. Algebra 34 (2006), 1265-1277.

\bibitem{OPS} S.-Q. Oh, C.-G Park and Y.-Y Shin, {\it A Poincar\'e-Birkhoff-Witt theorem for Poisson enveloping algebras}, Comm. Algebra 30(10) (2002), 4867-4887.

%\bibitem{T} M. Towers. {\it Poisson and Hochschild cohomology and the semiclassical limit}, arXiv: 1304.6003v2.

\bibitem{U} U. Umirbaev, {\it Universal enveloping algebras and universal derivations of Poisson algebras}, J. Algebra 354 (2012), 77-94.

\bibitem{Vd} M. Van den Berger, {\it Double Poisson algebras}, Trans. Amer. Math. Soc. 360(11) (2008), 5711-5769.

\bibitem{Xup} P. Xu, {\it Noncommutative Poisson algebras}, Amer. J. Math. 116(1) (1994), 101-125.

\bibitem{X} X.-P. Xu, {\it Novikov-Poisson algebras}, J. Algebra 190(2) (1997), 253-279.

\bibitem{YYY} Y.-H. Yang, Y. Yao and Y. Ye, {\it (Quasi-)Poisson enveloping algebras}, Acta Math. Sin. (Engl. Ser.) 29(1) (2013), 105-118.

\bibitem{YYZ} Y. Yao, Y. Ye and P. Zhang, {\it Quiver Poisson algebras}, J. Algebra 312(2) (2007), 570-589.

%\bibitem{ZVZ} C. Zhu, F. Van Oystaeyen and Y. Zhang. {\it On the (co)homology of Frobenius Poisson algebras}, arXiv:1305.1748.
\end{thebibliography}
\end{document}